\newcommand{\cat}[1]{\mathcal{#1}}
\newcommand{\vol}{\text{vol}}
\newcommand{\GL}{\text{GL}}
\newcommand{\Aut}{\text{Aut}}
\newcommand{\img}{\text{img}}
\newcommand{\st}{\text{ : }}
\newcommand{\Gal}{\text{Gal}}
\newcommand{\Mat}{\text{Mat}}
\newcommand{\End}{\text{End}}
\newcommand{\diag}{\text{diag}}
\newcommand{\sm}{\setminus}
\newcommand{\into}{\hookrightarrow}
\newcommand{\onto}{\twoheadrightarrow}
\newcommand{\Tr}{\text{Tr}}
\newcommand{\Nm}{\text{Nm}}
\renewcommand{\O}{\mathcal{O}}
\newcommand{\ord}{\text{ord}}
\newcommand{\p}{\mathfrak{p}}
\newcommand{\m}{\mathfrak{m}}
\newcommand{\Z}{\mathbb{Z}}
\newcommand{\Q}{\mathbb{Q}}
\newcommand{\C}{\mathbb{C}}
\newcommand{\N}{\mathbb{N}}
\newcommand{\F}{\mathbb{F}}
\newcommand{\A}{\mathbb{A}}
\newcommand{\R}{\mathbb{R}}
\newcommand{\T}{\mathbb{T}}
\newcommand{\inv}{{-1}}
\renewcommand{\subset}{\subseteq}
\newcommand{\sK}{\cat{K}}
\newcommand{\sX}{\cat{X}}
\newcommand{\Cl}{\mathnormal{Cl}}
\newcommand{\SU}{\text{SU}}
\renewcommand{\mod}{\text{ mod }}
\newtheorem{theorem}{Theorem}
\newtheorem{proposition}[theorem]{Proposition}
\newtheorem{lemma}[theorem]{Lemma}
\theoremstyle{remark}
\newtheorem{remark}[theorem]{Remark}
\newtheorem{example}{Example}
\theoremstyle{definition}
\newtheorem{definition}[theorem]{Definition}
\newtheorem*{definition*}{Definition}
\newtheorem*{claim*}{Claim}
\newtheorem*{theorem*}{Theorem}
\newtheorem*{corollary*}{Corollary}
\title{Visual Aspects of Gaussian Periods and Analogues}
\author[Samantha Platt]{Samantha Platt$^\dag$}
\thanks{$^\dag$Partially supported by the Paul and Harriet Civin Memorial Graduate Student Award and the E. M. Johnson Memorial Scholarship.}
\begin{document}

\begin{abstract}
	Gaussian periods have been studied for centuries in the realms of number theory, field theory, cryptography, and elsewhere. However, it was only within the last decade or so that they began to be studied from a visual perspective. By plotting Gaussian periods in the complex plane, various interesting and insightful patterns emerge, leading to various conjectures and theorems about their properties. In this paper, we offer a description of Gaussian periods, along with examples of the structure that can occur when plotting them in the complex plane. In addition to this, we offer two ways in which this study can be generalized to other situations---one relating to supercharacter theory, the other relating to class field theory---along with discussions and visual examples of each.
\end{abstract}

\maketitle

\tableofcontents

\section{Introduction}

Gaussian periods are certain exponential sums which are important in various areas of mathematics, especially number theory. Gauss originally used them when studying ruler and compass construction and quadratic reciprocity, and they have since been used by other well-known mathematicians. For example, Kummer studied them while proving part of Fermat's Last Theorem (and elsewhere), and Lenstra and Pomerance used them to better optimize the AKS primality test (the first deterministic, polynomial-time primality test). Gaussian periods also have connections to Gauss sums, which show up in the functional equations of Dirichlet $L$-functions. For these reasons and more, Gaussian periods have been used and studied in a variety of ways throughout the years.

However, Gaussian periods were not studied \emph{visually} until only very recently, where the increased computational capabilities of modern computers have made this possible. By plotting Gaussian periods in the complex plane, one immediately notices that many striking visual patterns emerge. Given how historically important and useful Gaussian periods have been, it is natural to study these plots in their own right in order to explain the mathematical structures underlying them.

To the author's knowledge, the visual study of Gaussian periods started mainly with works from Brumbaugh et al., Duke, Garcia, Hyde, and Lutz \cite{ExpSumsBrumbaugh, GraphicSymmetric, DGL, Menagerie}. In Section \ref{sec:defns}, we formally introduce Gaussian periods and Gaussian period plots, and we provide examples of these plots and some of their properties.

Generally speaking, the above mentioned authors were studying Gaussian periods through the framework of supercharacter theory---a generalization of character theory which can be more conducive to computations. In Section \ref{sec:supchar}, we discuss this framework in more depth, and we describe one way in which Gaussian periods can be generalized using this perspective. Using this generalization, we prove Theorem \ref{thm:DGLgeneralize}, which substantially extends a theorem of Duke, Garcia, and Lutz \cite[Theorem 6.3]{DGL}. Additionally, in Section \ref{sec:tracemaps}, we introduce a more dynamic perspective with which to study Gaussian periods, and we prove Proposition \ref{prop:animationswithrollinghypocycloids}, which describes and explains certain dynamic behaviors of Gaussian period plots.

Although the existing literature has focused on Gaussian period plots through a supercharacter theory perspective, there is another (seemingly less explored) framework with which to view plots of Gaussian periods, which is that of number theory and class field theory. In particular, the well-known result of Kronecker and Weber states that every finite abelian extension of the rational numbers is contained in some cyclotomic field. Given that Gaussian periods are certain sums of roots of unity, one might then wonder how Gaussian periods could be generalized to other base fields, what sorts of behaviors these generalizations exhibit, and what sort of insight this study could provide. We discuss these questions and more in Section \ref{sec:classfield}, and in particular, we explicitly define such a generalization for quadratic imaginary base fields in Definition \ref{def:RCFP}. Included in this discussion is a description of the explicit class field theory for quadratic imaginary fields in Section \ref{sec:classfieldellipticcurvesandCM}, which involves the study of torsion points of elliptic curves with complex multiplication. Additionally, we provide a proof of Proposition \ref{prop:GalgroupofrayclassfieldoverHilbert}, which computes the Galois group of ray class fields of quadratic imaginary fields over their Hilbert class fields.

Overall, the goal of this paper is to continue the visual exploration of Gaussian periods, while also motivating and initiating the visual exploration of certain generalizations of Gaussian periods from other perspectives.

\subsection{Acknowledgements}

I would like to thank the following people for their help at various points of this project. First, I thank Ellen Eischen for the original idea for the project and for the suggestions and advice throughout. I thank Benjamin Young for his serendipitous observations about the project, which led to interesting insights. I would also like to thank David Lowry-Duda, John Voight, and Joseph Silverman for their helpful conversations and suggestions about the implementation of coding ideas, which took place mainly at the Algorithmic Number Theory Symposium in 2022 and at an MSRI workshop in winter 2023. I would also like to thank April Wade for her extensive help with the code itself, especially with helping the code run efficiently. 

\section{Definitions and Motivations}

\label{sec:defns}

We begin by clarifying the definition of Gaussian periods which we will be using. Throughout this paper, we define $e(x) := e^{2 \pi i x}$. 

\begin{definition} 
	Let $n$ be an integer, and let $\omega$ be an integer coprime to $n$. Then $\omega \in (\Z/n\Z)^\times$, and we let $d$ be the multiplicative order of $\omega$ modulo $n$. For an integer $k$ (and using notation similar to \cite{Gallery}), we define the following map: $$\eta_{n, \omega}: \Z/n\Z \to \C, \hspace{0.5in} \eta_{n, \omega}(k) := \sum_{j = 0}^{d - 1} e\left(\frac{\omega^j k}{n}\right).$$ We call $\eta_{n, \omega}(k)$ a \textit{Gaussian period of modulus $n$ and generator $\omega$}; note that we do not require $k$ to be relatively prime to $n$ in our definition. Additionally, we call $\img(\eta_{n, \omega})$ the \textit{Gaussian period plot of modulus $n$ and generator $\omega$} (or simply the \textit{Gaussian period plot} where $n$ and $\omega$ are clear from context).
\end{definition}

In Figure \ref{fig:GaussPerPlots}, we provide examples of Gaussian period plots for various choices of $n$ and $\omega$. Two Gaussian periods $\eta_{n, \omega}(k)$ and $\eta_{n, \omega}(k')$ have the same color in these plots if $k \equiv k' \mod c$, where $c$ is a chosen color modulus. We won't focus too much on the color scheme in this paper, but the reader can refer to \cite{Gallery} and \cite[\S 3]{Menagerie} for more information.

\begin{figure}[h!]
	\centering
	\begin{subfigure}[b]{0.32\linewidth}
		\includegraphics[width=\linewidth]{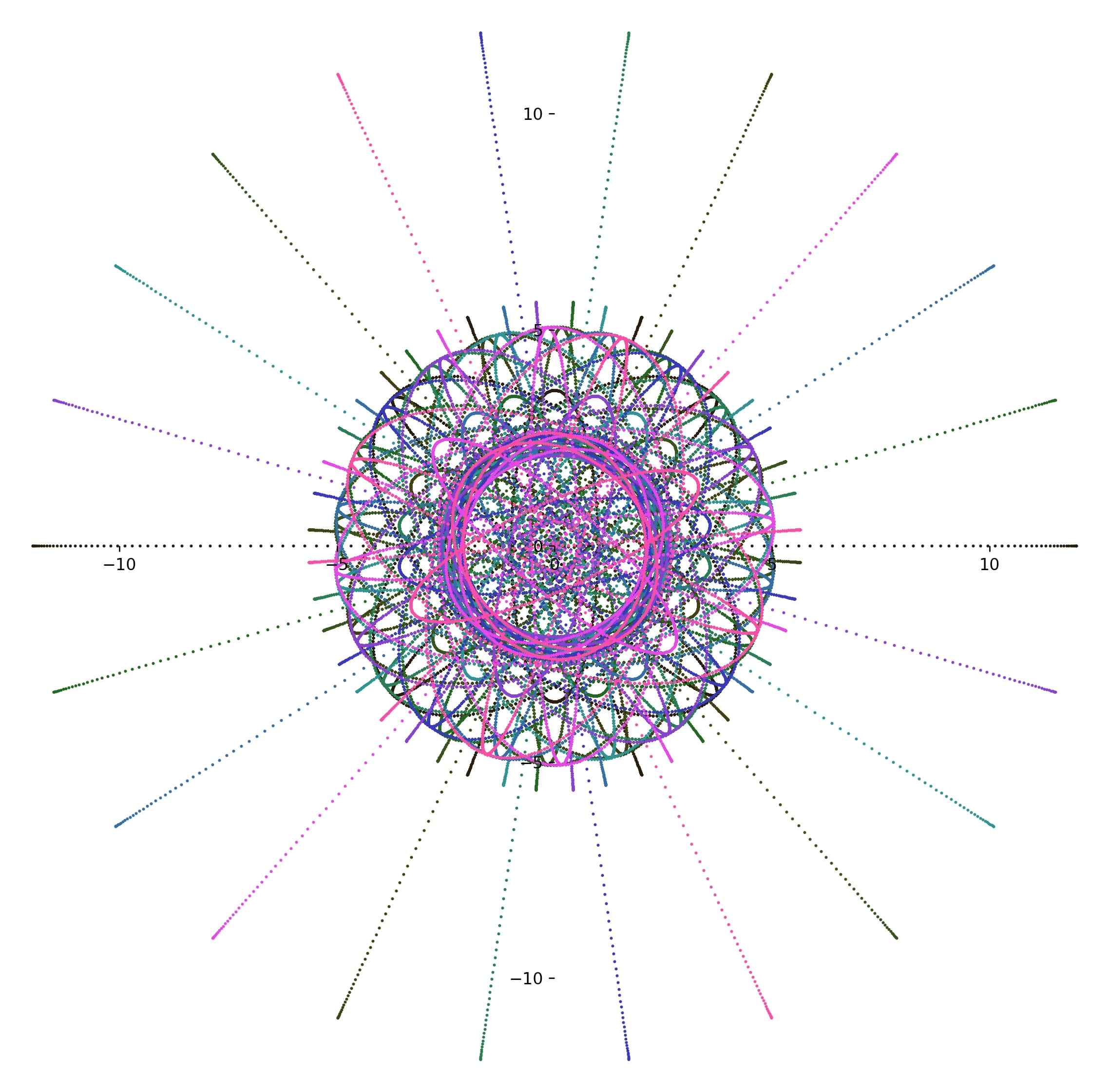}
		\caption{$n = 255255$, $\omega = 254$, $c = 11$}
	\end{subfigure}
	\begin{subfigure}[b]{0.32\linewidth}
		\includegraphics[width=\linewidth]{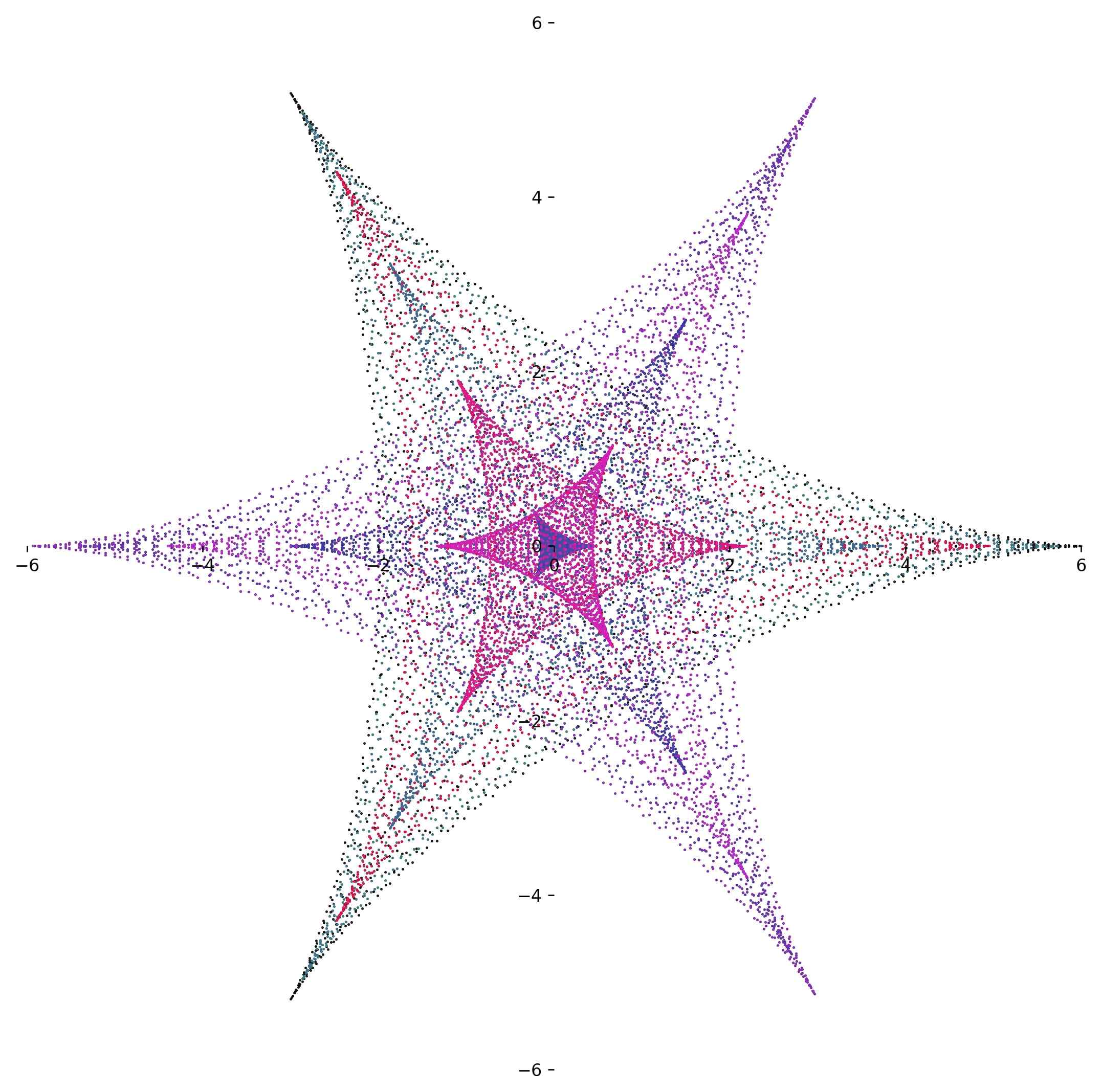}
		\caption{$n = 82677$, $\omega = 8147$, $c = 21$}
	\end{subfigure}
	\begin{subfigure}[b]{0.32\linewidth}
		\includegraphics[width=\linewidth]{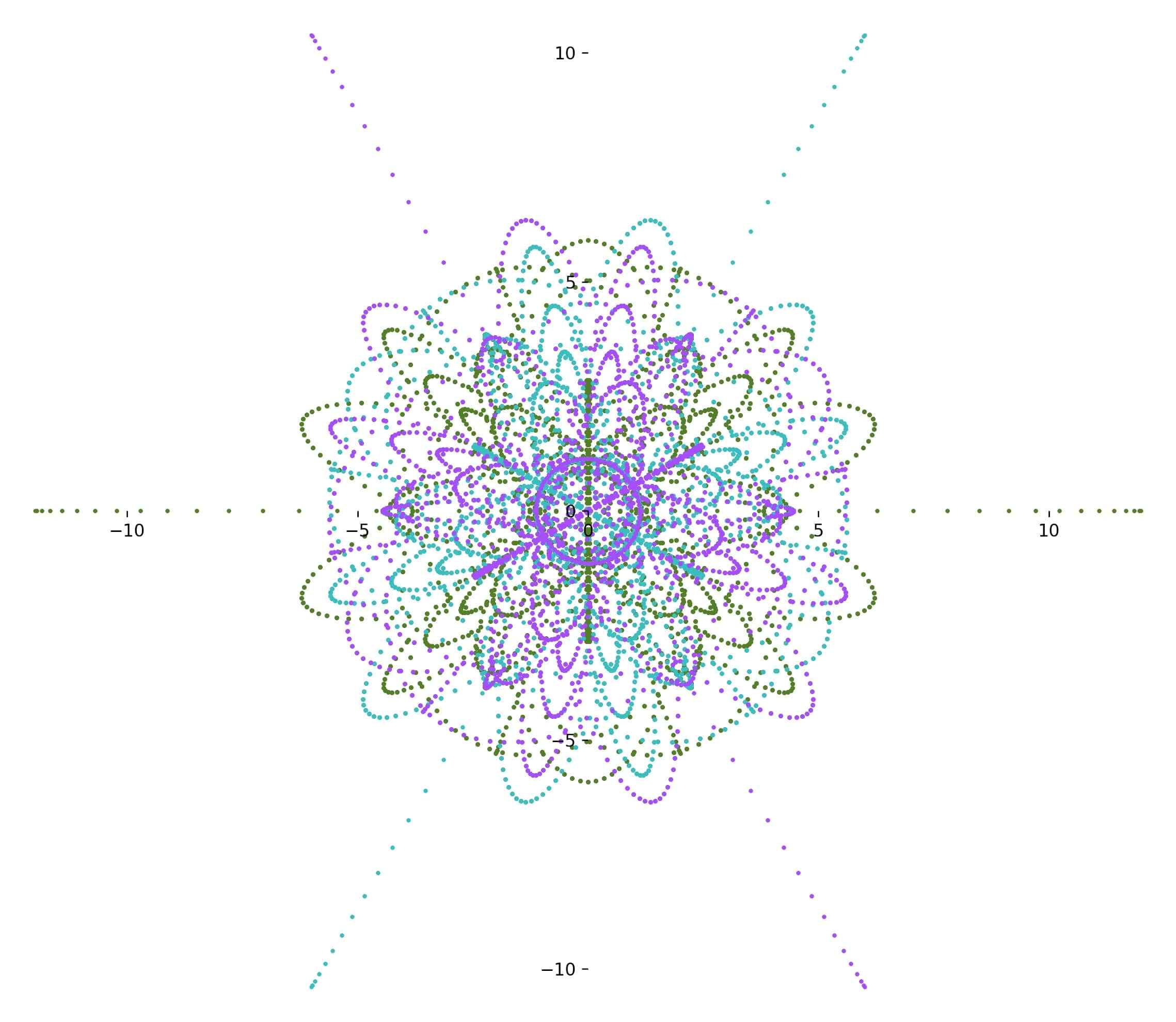}
		\caption{$n = 62160$, $\omega = 319$, $c = 3$}
	\end{subfigure}

	\begin{subfigure}[b]{0.32\linewidth}
		\includegraphics[width=\linewidth]{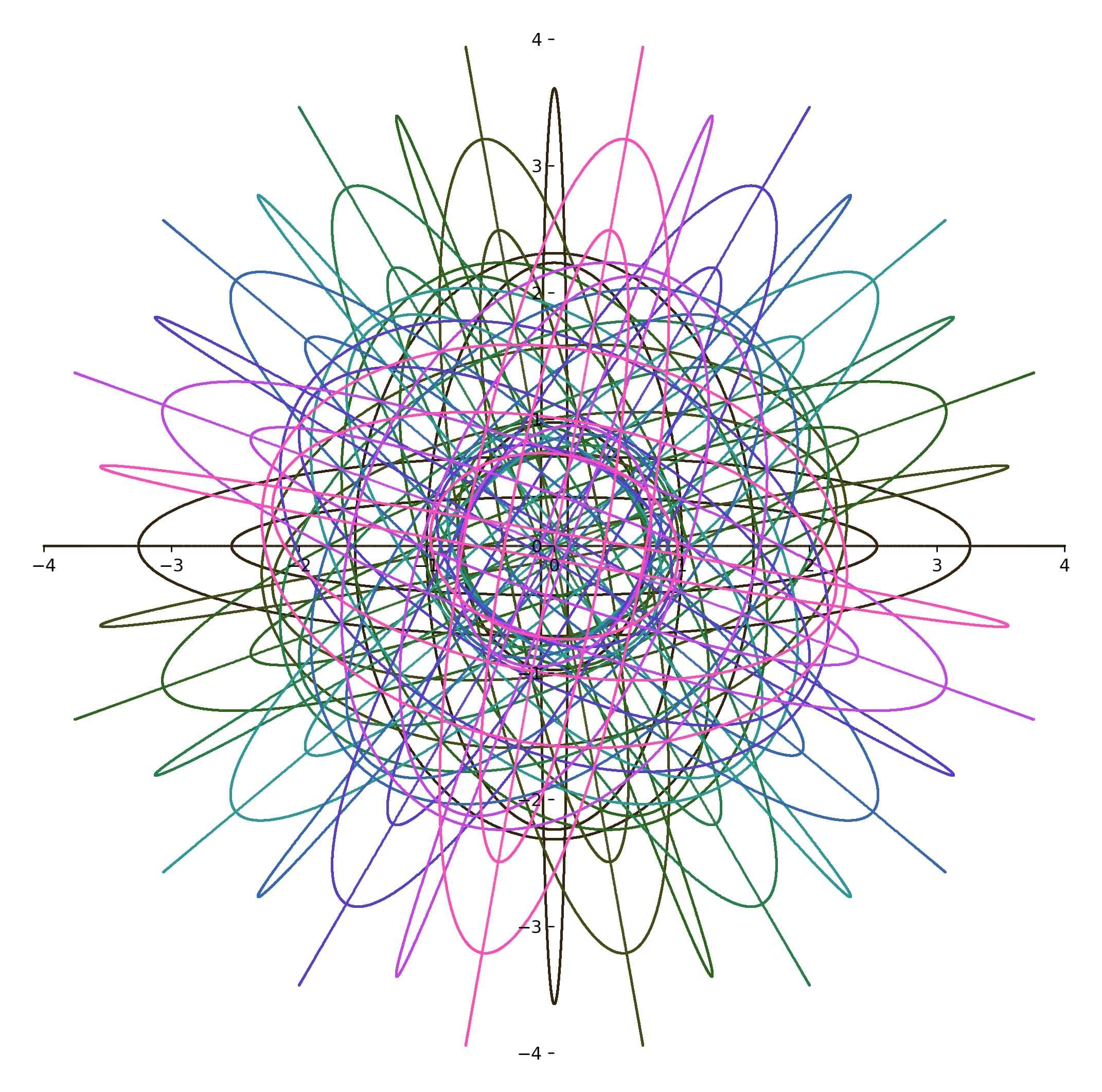}
		\caption{$n = 455175$, $\omega = 107218$, $c = 9$}
	\end{subfigure}
	\begin{subfigure}[b]{0.32\linewidth}
		\includegraphics[width=\linewidth]{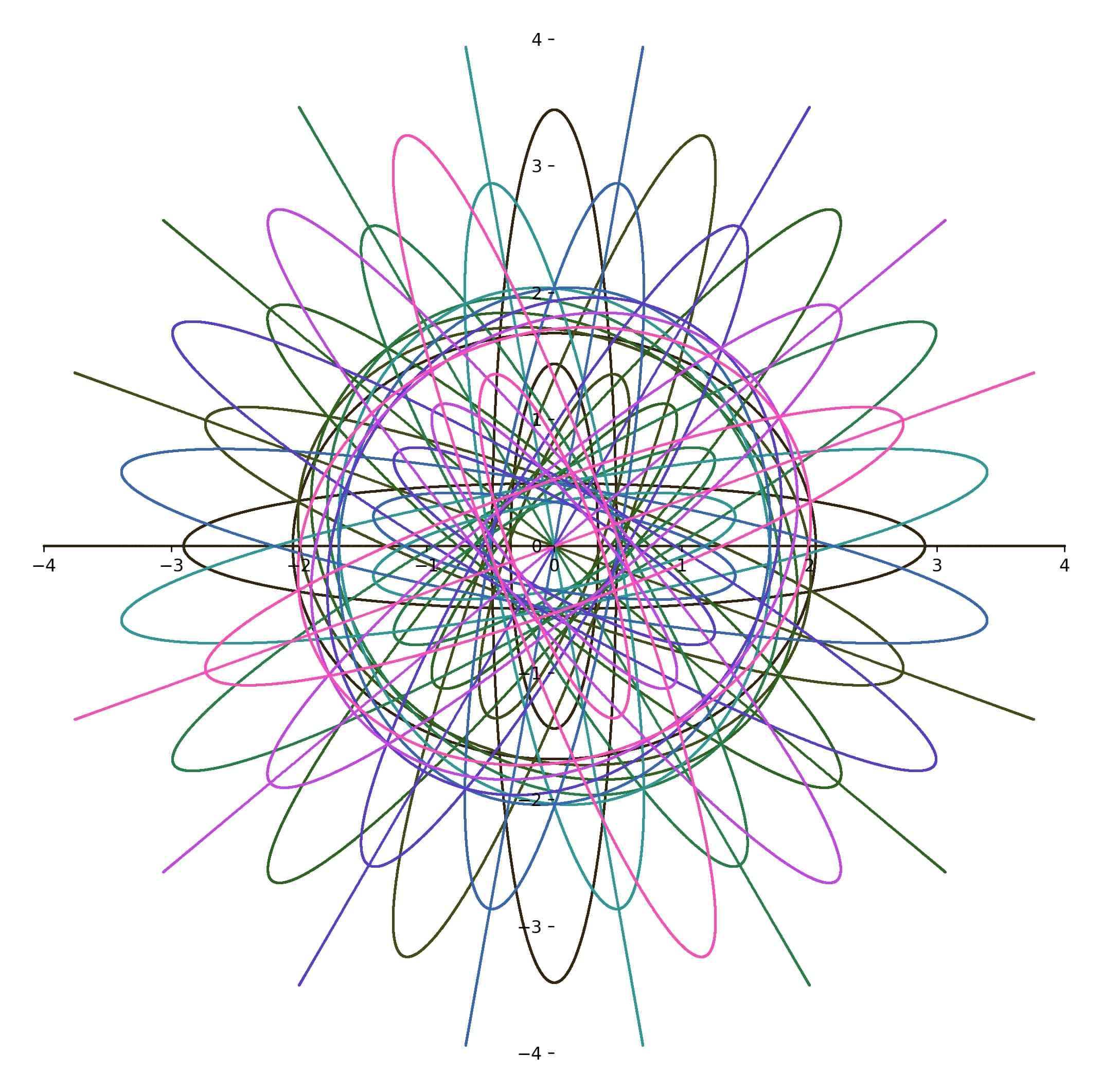}
		\caption{$n = 478125$, $\omega = 3124$, $c = 9$}
	\end{subfigure}
	\begin{subfigure}[b]{0.32\linewidth}
		\includegraphics[width=\linewidth]{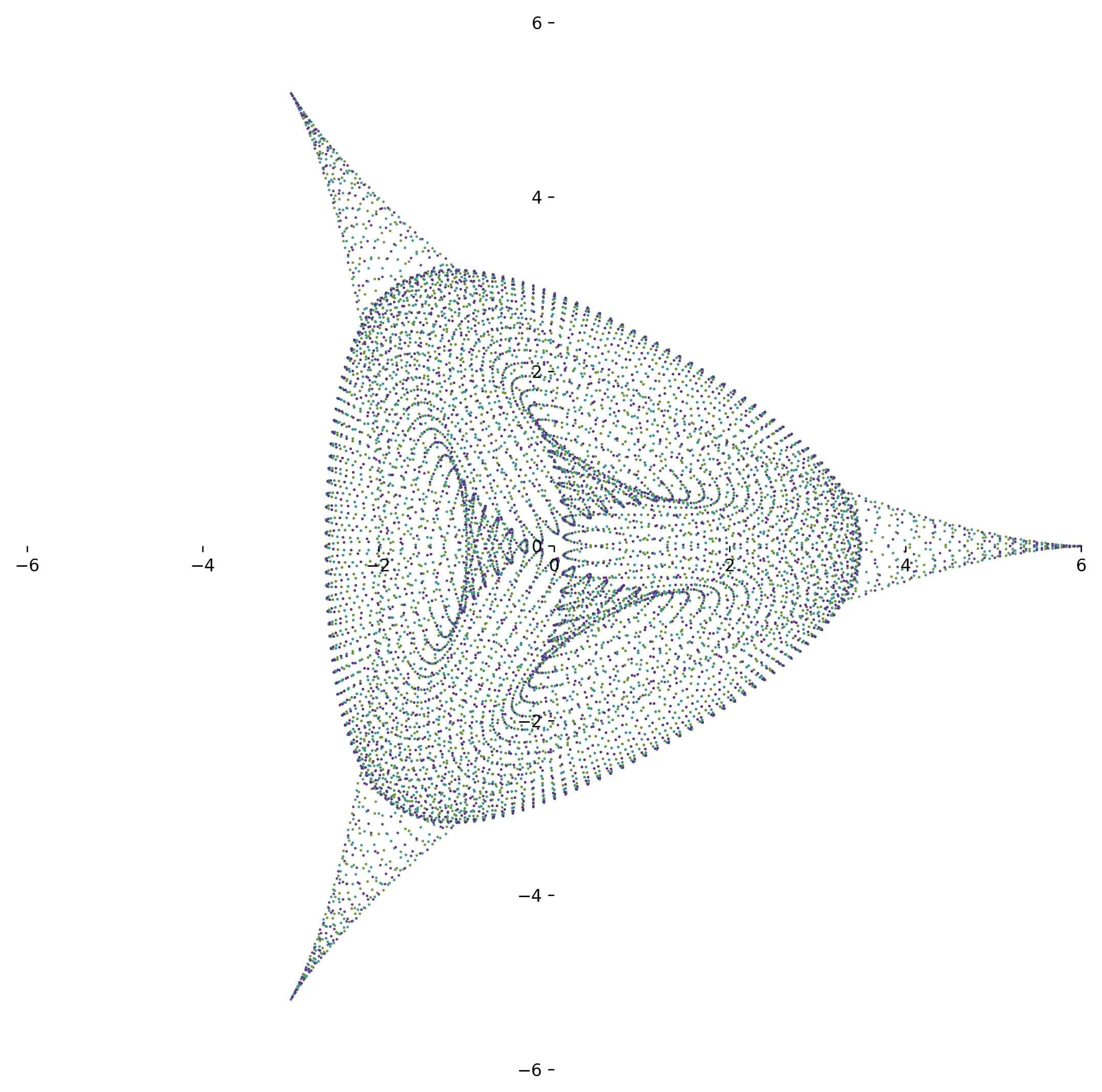}
		\caption{$n = 52059$, $\omega = 766$, $c = 3$}
	\end{subfigure}

	\begin{subfigure}[b]{0.32\linewidth}
		\includegraphics[width=\linewidth]{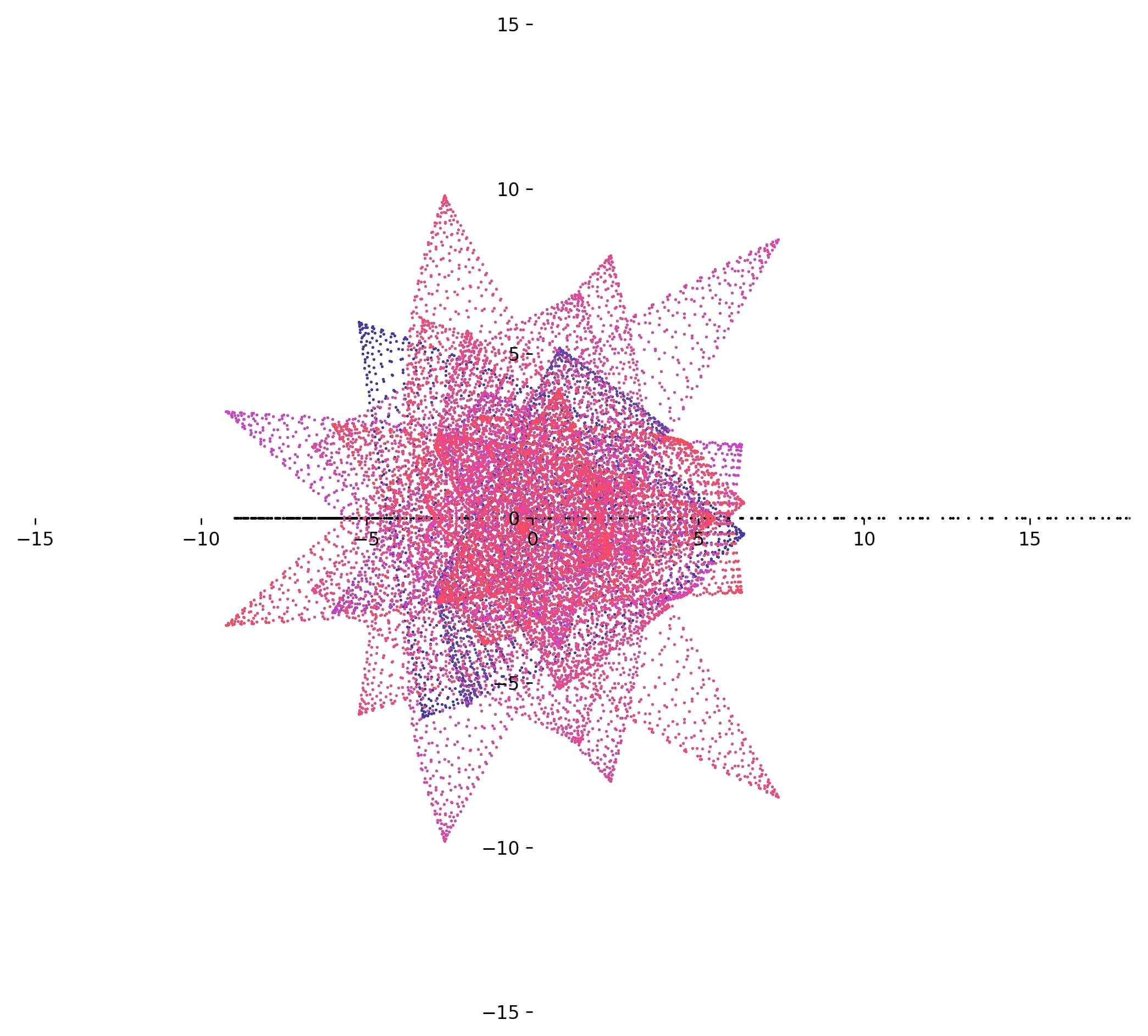}
		\caption{$n = 328549$, $\omega = 9247$, $c = 127$}
	\end{subfigure}
	\begin{subfigure}[b]{0.32\linewidth}
		\includegraphics[width=\linewidth]{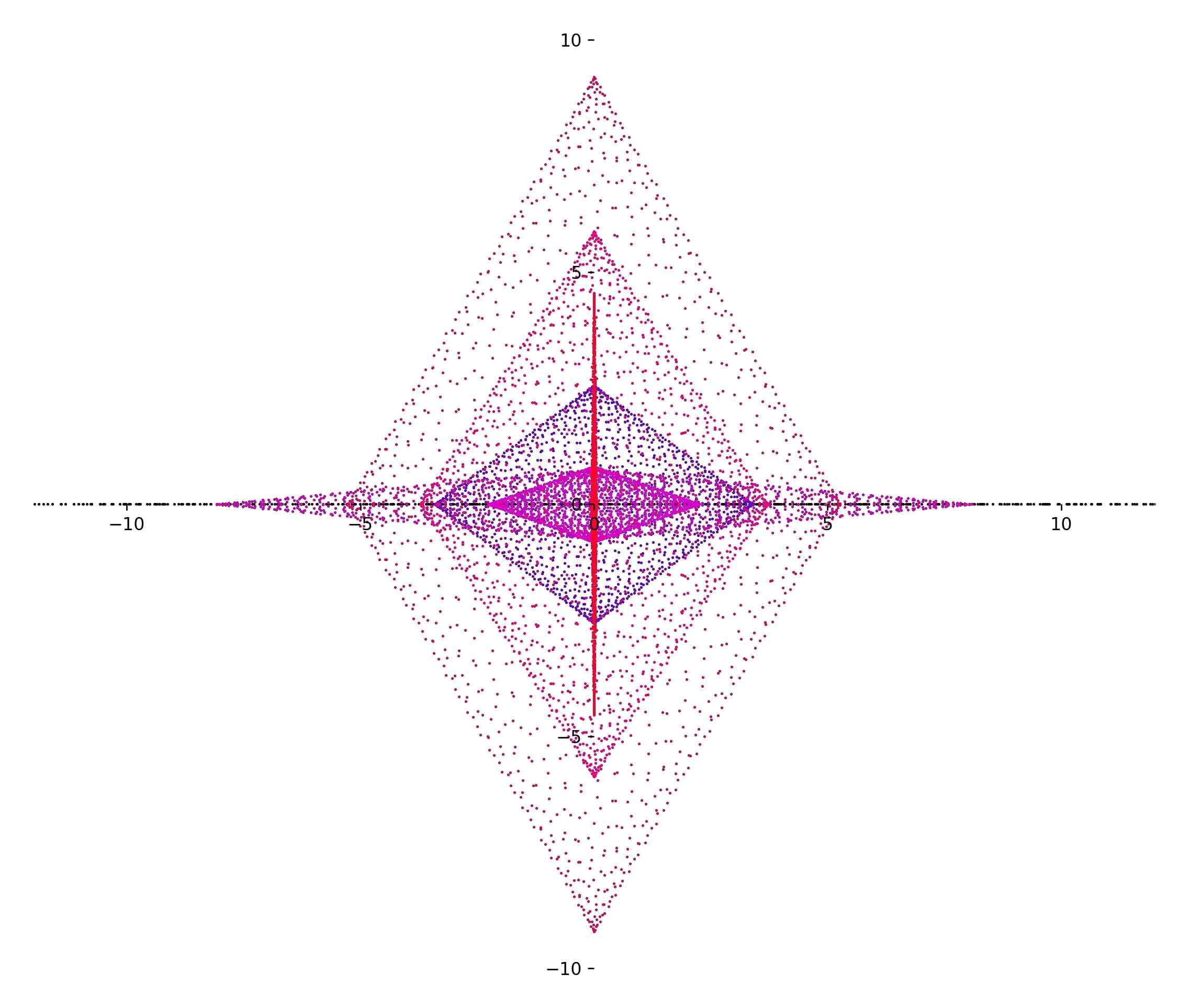}
		\caption{$n = 91205$, $\omega = 1322$, $c = 37$}
	\end{subfigure}
	\begin{subfigure}[b]{0.32\linewidth}
		\includegraphics[width=\linewidth]{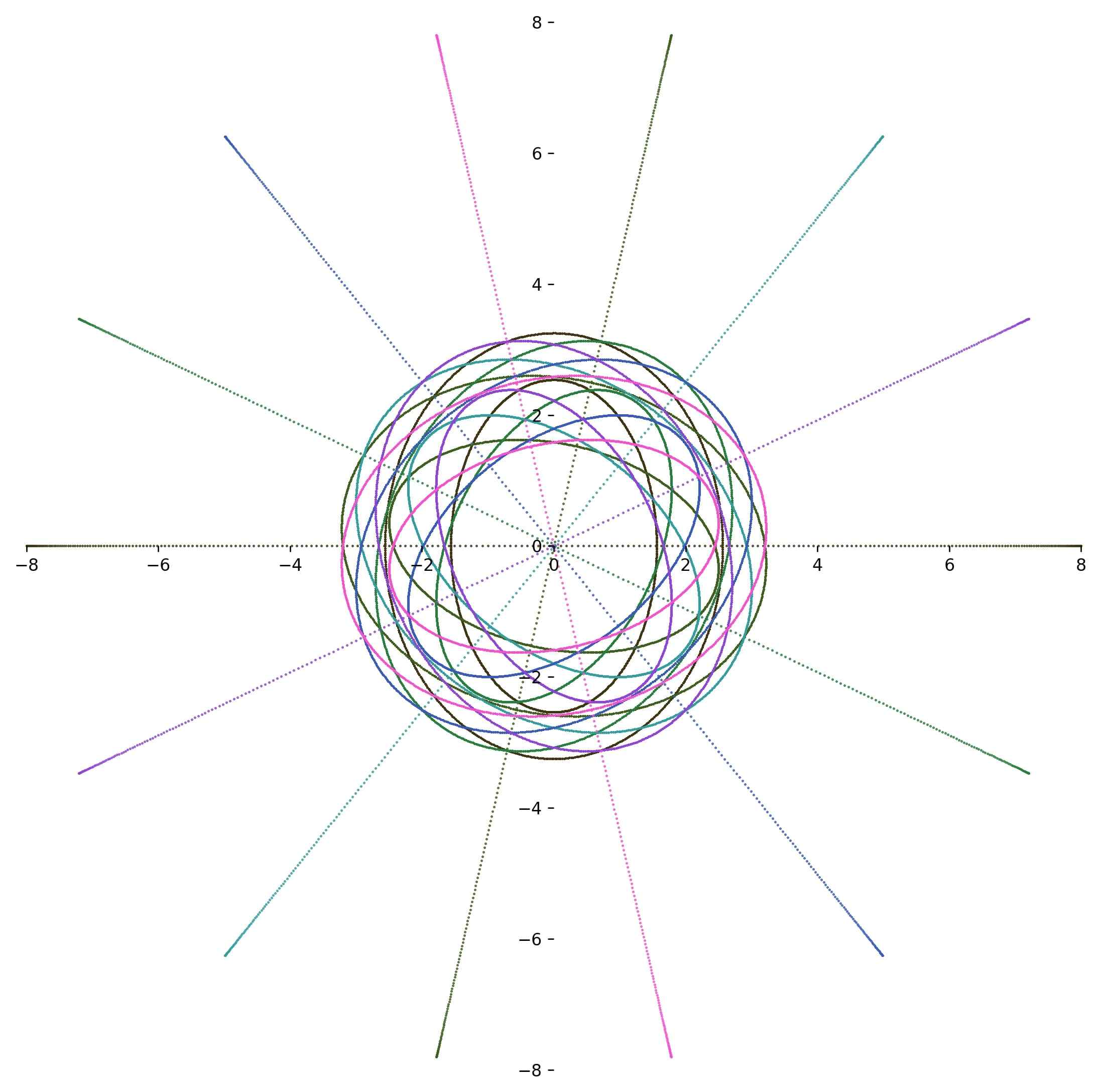}
		\caption{$n = 70091$, $\omega = 21792$, $c = 7$}
	\end{subfigure}
	\caption{Examples of Gaussian period plots for various choices of $n$ and $\omega$}
	\label{fig:GaussPerPlots}
\end{figure}

It is now that one might wonder if Gaussian period plots might be described or explained mathematically, as they obviously seem to have some sort of structure and symmetry to them. While there are some cases in which such a description is still unknown, there are many in which we do indeed have an explanation. We offer one such explanation, proved by Duke, Garcia, and Lutz \cite{DGL}, as motivation.

\begin{theorem}[Theorem 6.3 of \cite{DGL}]
	Let $n = p^a$, where $p$ is an odd prime. Choose $\omega \in (\Z/p^a\Z)^\times$ so that it has multiplicative order $d$ dividing $p - 1$. Let $\Phi_d(x)$ denote the $d$-th cyclotomic polynomial, and let $\T$ denote the complex unit circle. Then the Gaussian period plot $\img(\eta_{n, \omega})$ is contained in the image of the Laurent polynomial function $g_d: \T^{\varphi(d)} \to \C$ defined by $$g_d(z_1, z_2, \ldots, z_{\varphi(d)}) = \sum_{j = 0}^{d - 1} \prod_{m = 0}^{\varphi(d) - 1} z_{m + 1}^{c_{mj}},$$ where the constants $c_{mj}$ are defined by the following relations: $$x^j \equiv \sum_{m = 0}^{\varphi(d) - 1} c_{mj} x^m \mod \Phi_d(x).$$ Moreover, for a fixed $d$, as $p^a$ tends to infinity (where $p$, $a$, and the choice of $\omega$ are allowed to vary, as long as $d$ divides $p - 1$), every nonempty open disc contained in $\img(g_d)$ eventually contains points in $\img(\eta_{n, \omega})$. In other words, $\img(g_d)$ is ``filled out'' by Gaussian periods as $n$ goes to infinity.
	
	\label{thm:DGL}
\end{theorem}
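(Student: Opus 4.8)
The plan is to recast everything in terms of the ring $\O_K:=\Z[\zeta_d]\cong\Z[x]/(\Phi_d(x))$, where $K=\Q(\zeta_d)$. First I would note that the hypothesis $d\mid p-1$ (equivalently $p\nmid d$) forces $\Phi_d(\omega)\equiv 0\pmod{p^a}$: since the kernel of $(\Z/p^a\Z)^\times\to\F_p^\times$ is a $p$-group while $\omega$ has order $d$ prime to $p$, $\omega$ already has order exactly $d$ modulo $p$; hence for every proper divisor $e\mid d$ the reduction $\bar\omega$ is not a root of $\Phi_e$ mod $p$ (such a root has order $e<d$), so $\Phi_e(\omega)$ is a unit in $\Z/p^a\Z$, and evaluating the identity $x^d-1=\prod_{e\mid d}\Phi_e(x)$ at $\omega$ leaves $\Phi_d(\omega)\equiv 0\pmod{p^a}$. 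Thus $x\mapsto\omega$ defines a surjective ring homomorphism $\psi_{n,\omega}\colon\O_K\to\Z/n\Z$.

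The containment $G(n,\omega)\subseteq\img(g_d)$ is then a short computation. Fix $k$, set $\zeta:=e(k/n)$ (so $\zeta^n=1$), and put $z_{m+1}:=\zeta^{\omega^m}=e(\omega^m k/n)\in\T$ for $0\le m\le\varphi(d)-1$. The defining relations $x^j\equiv\sum_m c_{mj}x^m\pmod{\Phi_d(x)}$ yield $\omega^j\equiv\sum_m c_{mj}\omega^m\pmod n$, and since $\zeta$ has order dividing $n$ this gives $\zeta^{\omega^j}=\prod_m(\zeta^{\omega^m})^{c_{mj}}=\prod_m z_{m+1}^{c_{mj}}$. Summing over $j=0,\dots,d-1$ gives $\eta_{n,\omega}(k)=g_d(z_1,\dots,z_{\varphi(d)})\in\img(g_d)$, and $k$ was arbitrary.

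For the filling-out statement, set $S_{n,\omega}:=\{(e(\omega^m k/n))_{0\le m\le\varphi(d)-1}:k\in\Z/n\Z\}\subseteq\T^{\varphi(d)}$, a subgroup of order $n$ with $g_d(S_{n,\omega})=G(n,\omega)$, and let $\mu_{n,\omega}$ be the uniform probability measure on it. I will show that $\mu_{n,\omega}$ converges weak-$*$ to Haar measure as $p^a\to\infty$, \emph{uniformly in the choice of $\omega$}. Granting this: given a nonempty open disc $D\subseteq\img(g_d)$, pick $w\in D$ and $t^*\in\T^{\varphi(d)}$ with $g_d(t^*)=w$, and use uniform continuity of $g_d$ to find $\rho>0$ with $g_d(B_\rho(t^*))\subseteq D$. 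Choose a continuous bump $f\colon\T^{\varphi(d)}\to[0,1]$ supported in $B_\rho(t^*)$ with positive Haar integral; approximating $f$ uniformly by a finite combination of characters reduces the inequality $\int f\,d\mu_{n,\omega}>0$ to the vanishing of finitely many Fourier coefficients $\hat\mu_{n,\omega}(\psi)$, so that $\int f\,d\mu_{n,\omega}>0$ for all $n=p^a$ past a bound depending only on $D$. Then $S_{n,\omega}\cap B_\rho(t^*)\ne\emptyset$, so some $k$ satisfies $\eta_{n,\omega}(k)=g_d\big((e(\omega^m k/n))_m\big)\in g_d(B_\rho(t^*))\subseteq D$.

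The heart of the matter is the equidistribution---really, the uniform vanishing of Fourier coefficients. By Weyl's criterion it suffices that for each nonzero $(a_1,\dots,a_{\varphi(d)})\in\Z^{\varphi(d)}$, the corresponding character of $\T^{\varphi(d)}$ is nontrivial on $S_{n,\omega}$ once $n$ is large, with a threshold independent of $\omega$. But this character is trivial on $S_{n,\omega}$ precisely when $\sum_m a_{m+1}\omega^m\equiv 0\pmod n$, i.e.\ when $\beta:=\sum_m a_{m+1}\zeta_d^m$ lies in $\ker\psi_{n,\omega}$; since $\beta\ne 0$ and $\ker\psi_{n,\omega}$ is an ideal of $\O_K$ of index $n$, this forces $(\beta)\subseteq\ker\psi_{n,\omega}$ and hence $n\mid[\O_K:(\beta)]=|\Nm_{K/\Q}(\beta)|$, a quantity depending on the $a_i$ alone. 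So the offending $n$ are bounded independently of $\omega$, and the equidistribution follows. I expect this norm-bound step---recognizing that ``a character vanishes on $S_{n,\omega}$'' is the same as ``a fixed nonzero algebraic integer lies in an index-$n$ ideal of $\O_K$'', which can happen for only finitely many $n$---to be the key point; the supporting pieces (the homomorphism $\psi_{n,\omega}$ and the standard harmonic analysis on $\T^{\varphi(d)}$) are routine.
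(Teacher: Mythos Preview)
Your proof is correct. The paper does not prove this statement directly---it is quoted from Duke--Garcia--Lutz---but it does prove a generalization (Theorem~9) whose argument specializes to the case at hand, and that is the natural point of comparison.

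The containment step and the overall scaffold (Weyl's criterion, reduction to showing that each nontrivial character of $\T^{\varphi(d)}$ restricts nontrivially to the sample set for all sufficiently large $n$, with a threshold independent of $\omega$) match. The difference lies in the ``finitely many exceptions'' step. You work in the cyclotomic ring $\O_K=\Z[\zeta_d]$: the surjection $\psi_{n,\omega}\colon\O_K\to\Z/n\Z$ has kernel of index $n$, so if the fixed nonzero element $\beta=\sum_m a_{m+1}\zeta_d^m$ lies in that kernel then $n$ divides $[\O_K:(\beta)]=|\Nm_{K/\Q}(\beta)|$. The paper instead stays at the level of polynomials: writing $f(x)=\sum_m a_{m+1}x^m$, it uses $\gcd(f,\Phi_d)=1$ in $\Q[x]$, clears denominators to obtain $R(x)f(x)+S(x)\Phi_d(x)=t$ in $\Z[x]$ for a fixed nonzero integer $t$, and evaluates at $\omega$ (using $\Phi_d(\omega)\equiv 0$) to conclude $n\mid t$. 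These are two faces of the same fact---your norm is, up to sign, the resultant $\Res(f,\Phi_d)$, which is one admissible choice of the paper's $t$. Your number-field phrasing is arguably cleaner in this $m=1$ setting and yields the sharp bound directly; the paper's B\'ezout argument is more elementary and, more to the point, carries over verbatim to matrices $A\in\GL_m(\Z/n\Z)$ in Theorem~9, where no ready analogue of $\O_K$ is available.
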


It is interesting to note that in the case where $d$ is itself also a prime, the image $\img(g_d)$ is a \emph{$d$-sided hypocycloid}. We provide examples of this phenomenon (for $d$ both prime and composite) in Figure \ref{fig:DGLExamples}. Additionally, it may be of interest to the reader to compute $g_d$ for various values of $d$; to this end, we wrote some code in Sage that generates $g_d$, and it is included in the GitHub link provided in Section \ref{sec:Code}. 

\begin{figure}[h!]
	\centering
	\begin{subfigure}[b]{0.37\linewidth}
		\includegraphics[width=\linewidth]{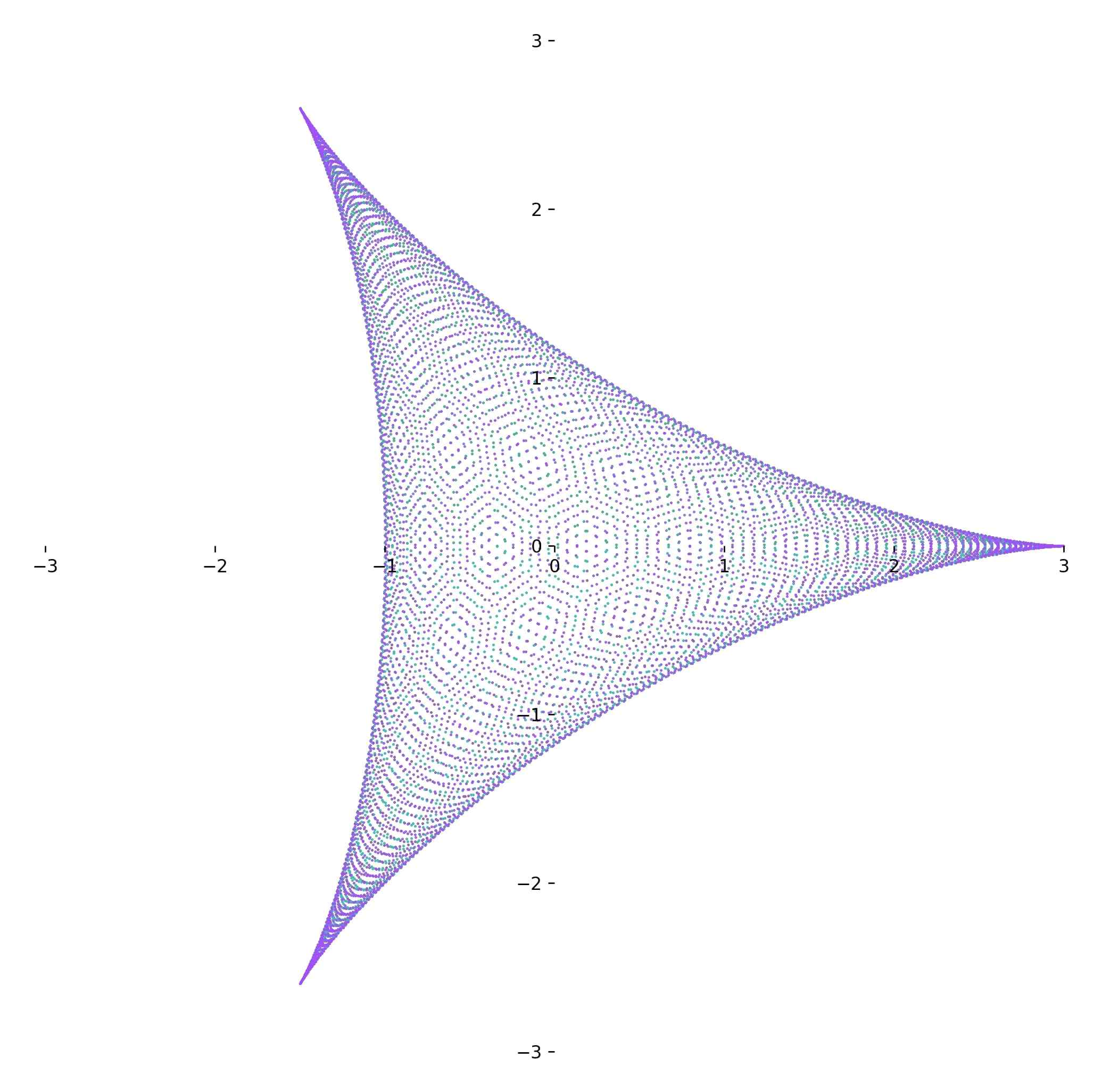}
		\caption{$n = 31^3$, $\omega = 23503$, $d = 3$}
	\end{subfigure}
	\begin{subfigure}[b]{0.37\linewidth}
		\includegraphics[width=\linewidth]{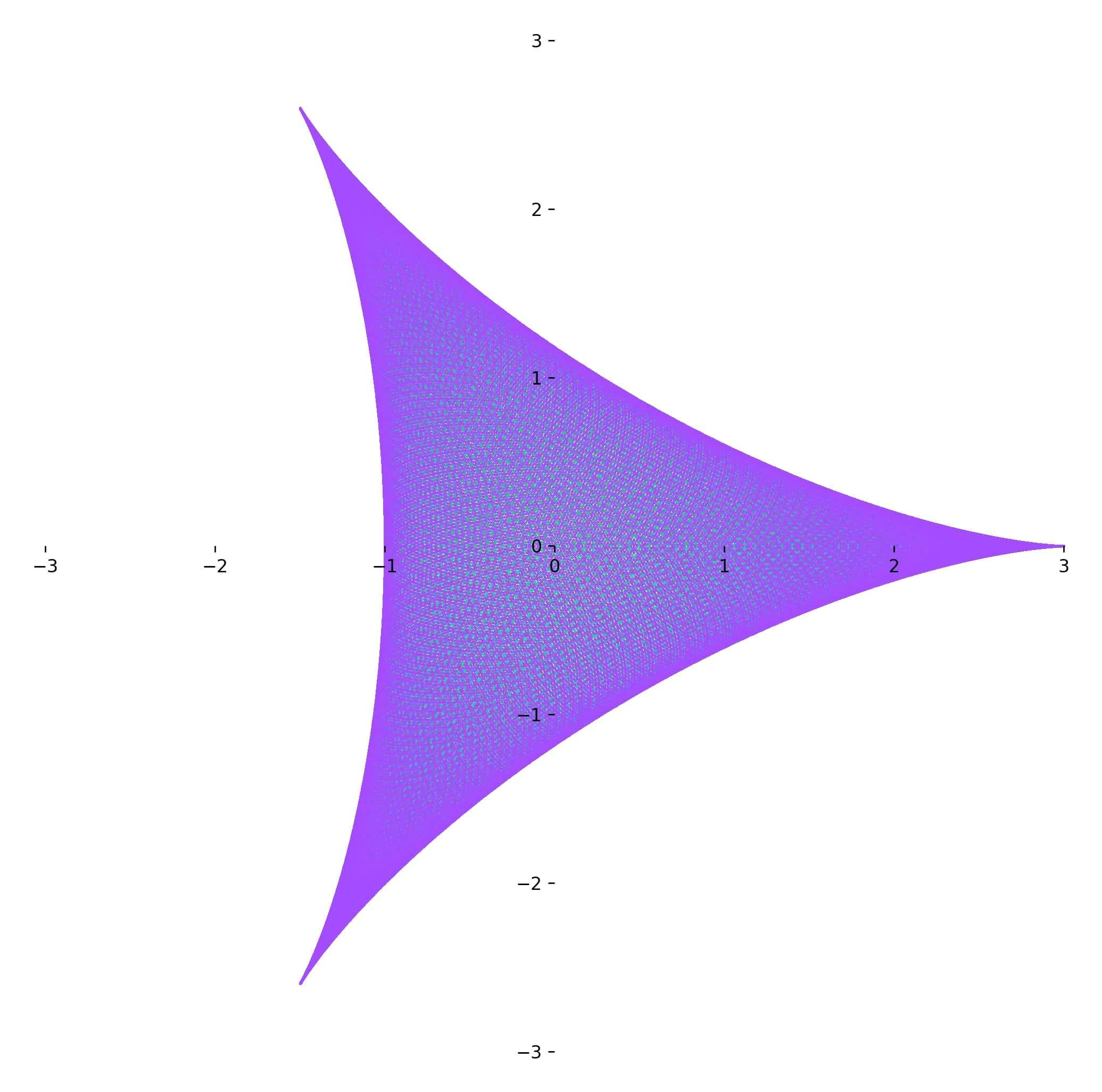}
		\caption{$n = 13^5$, $\omega = 220860$, $d = 3$}
	\end{subfigure}
	
	\begin{subfigure}[b]{0.37\linewidth}
		\includegraphics[width=\linewidth]{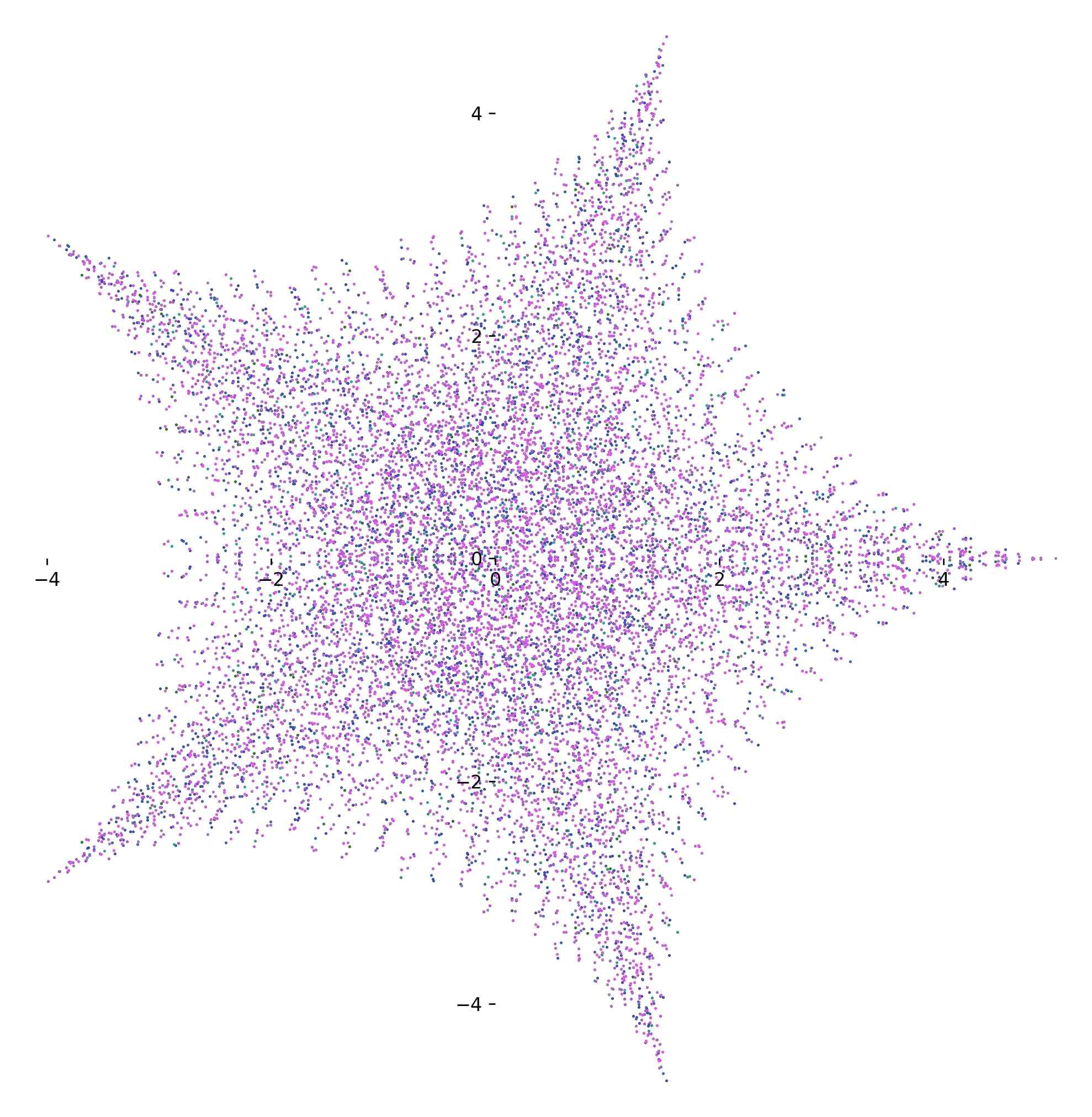}
		\caption{$n = 41^3$, $\omega = 46244$, $d = 5$}
	\end{subfigure}
	\begin{subfigure}[b]{0.37\linewidth}
		\includegraphics[width=\linewidth]{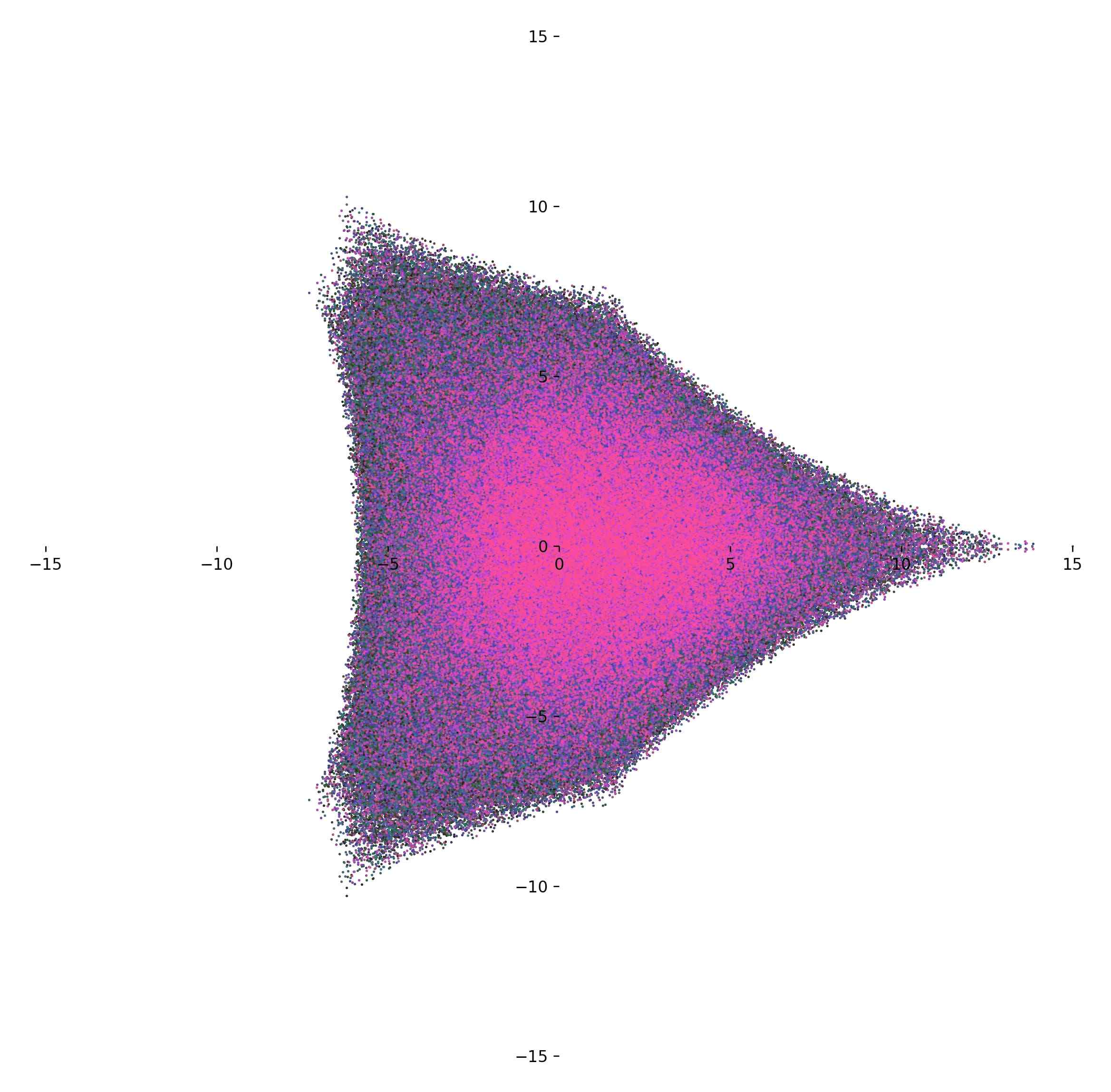}
		\caption{$n = 31^5$, $\omega = 17404906$, $d = 15$}
	\end{subfigure}

	\caption{Examples of Duke--Garcia--Lutz Theorem for various values of $d$}
	\label{fig:DGLExamples}
\end{figure}

Additionally, we believe it is worth noting that this theorem has been reinterpreted by Untrau in \cite{untrau2021equidistribution} and Kowalski--Untrau in \cite{untraukowalski2023ultrashort} as a statement about the distribution of points in the complex plane under the pushforward measure of the Haar measure on $\T^{\varphi(d)}$ via the map $g_d$. Untrau and Kowalski then use this reinterpretation as a starting point from which they generalize to other situations.

\subsection{Supercharacter Theory}

As mentioned previously, there are two main perspectives which this paper will use when viewing Gaussian periods and their analogues: supercharacter theory and class field theory. We start by defining a supercharacter theory.

\begin{definition}
	Let $G$ be a finite group with identity $1$, let $\sK$ be a partition of $G$, and let $\sX$ be a partition of the set of irreducible characters of $G$. Then $(\sX, \sK)$ is a \textit{supercharacter theory} for $G$ if the following hold:
	\begin{itemize}
		\item $\{1\} \in \sK$
		
		\item $|\sX| = |\sK|$
		
		\item For each $X \in \sX$, the function $\sigma_X = \sum_{\chi \in X} \chi(1) \chi$ is constant on each $K \in \sK$. 
	\end{itemize}
	The elements $K \in \sK$ are called \emph{superclasses}, and the functions $\sigma_X$ are called \emph{supercharacters}. 
\end{definition}

Using the above definition, let $G = \Z/n\Z$. For every $x \in G$, there exists an irreducible character $\chi_x$ such that $\chi_x(y) = e\left(\frac{xy}{n}\right)$ for every $y \in G$, and these characters describe all of the irreducible characters of $G$. For the cyclic subgroup $\langle \omega \rangle \subset (\Z/n\Z)^\times$, let $\sK$ be the partition of $G$ corresponding to the orbits of the action $a \cdot x = ax$ for $a \in \langle \omega \rangle$. Additionally, let $\sX$ be the partition of the irreducible characters of $G$ corresponding to the action $a \cdot \chi_x = \chi_{a^\inv x}$. One can then check that $\sX$ and $\sK$ are compatible as in the definition given above, so $(\sX, \sK)$ defines a supercharacter theory on $\Z/n\Z$. 

Thus the Duke--Garcia--Lutz Theorem, for example, gives a geometric description of the values which show up in a ``supercharacter table'' for $(\sX, \sK)$.

\begin{remark}
	It should be noted that the above construction of the partitions $\sX$ and $\sK$ work for \emph{any} subgroup $\Gamma \subset (\Z/n\Z)^\times$ and not just the cyclic subgroups as we have described them. In this paper, however, we mainly restrict ourselves to the case in which $\Gamma$ is cyclic. Supercharacter theories induced by non-cyclic subgroups of $(\Z/n\Z)^\times$ have received little attention in previous papers, though we believe it is a subject worth exploring.
\end{remark}

\begin{remark}
	Supercharacter theory was first described axiomatically by Diaconis and Isaacs in 2008 in \cite{Supercharacter}. As mentioned in \cite{Menagerie}, supercharacter theory has been used in the study of a variety of objects, including the Hopf algebra of symmetric functions of non-commuting variables, random walks on upper triangular matrices, and Ramanujan sums. 
\end{remark}

\subsection{Class Field Theory}

\begin{remark}
	It is worth noting that we will not be describing class field theory in great detail in this paper, as we will be providing only a general overview for motivation. Because of this, it isn't expected that the reader know these details in order to follow any results; the reader may need to accept certain statements as fact, but we believe that very little will be lost in doing so. For further reading about class field theory, we recommend \cite{Childress}.
\end{remark}

Given a base field $K$, the main goal of class field theory is to describe all finite abelian extensions of $K$ using its local properties. In most cases, class field theory allows us to compute the Galois groups of these field extensions fairly easily; however, explicitly finding the fields corresponding to such Galois groups is often a much harder task. In fact, this is the subject of Hilbert's 12th problem, which asks which algebraic numbers must be adjoined to $K$ in order to generate its abelian extensions. 

The answer to Hilbert's 12th problem is known in only very few cases. For example, when $K = \Q$, the Kronecker-Weber Theorem states that every finite abelian extension of $\Q$ is contained in some finite cyclotomic extension of $\Q$. Thus the roots of unity are the algebraic numbers needed to generate abelian extensions. 

Another case in which the answer is known (and which is a source of exploration later in this paper) is the case in which $K$ is a quadratic imaginary field, where the well-known theory of complex multiplication tells us that all finite abelian extensions of $K$ are generated by adjoining certain values of the modular $j$-function along with coordinates of torsion points of certain elliptic curves.

With this in mind, we offer the following definitions from class field theory.

\begin{definition}
	Let $K$ be a number field, and let $\O_K$ be its ring of integers. Given an ideal $\m \subset \O_K$ called the \emph{modulus}, we obtain a \emph{ray class group of modulus $\m$}, which will be denoted $\Cl_K(\m)$ in this paper. For each ray class group of modulus $\m$, there exists a \emph{ray class field of modulus $\m$}, denoted $K[\m]$, whose Galois group $\Gal(K[\m]/K)$ is isomorphic to $\Cl_K(\m)$ and whose set of ramified primes are only those which divide $\m$. In the special case where $\m = (1)$, we call $K[1]$ the \emph{Hilbert class field}, whose Galois group $\Gal(K[1]/K)$ is isomorphic to the ideal class group $\Cl_K(1)$ of $K$. 
\end{definition}

In addition to the above definitions, we make two important notes about ray class groups and ray class fields. First, if $\mathfrak{n}$ is an ideal dividing $\m$, then $\Cl_K(\mathfrak{n}) \subset \Cl_K(\m)$ and $K[\mathfrak{n}] \subset K[\m]$. One important implication of this is that the Hilbert class field is an intermediate field extension for every modulus $\m$; that is, $K \subset K[1] \subset K[\m]$ for every $\m$.

Additionally, it is worth noting an alternate description of ray class fields. Rather than viewing them through the correspondence provided above, one can instead define ray class fields of modulus $\m$ to be the maximal abelian extension of $K$ which is ramified only at the primes dividing $\m$, assuming a certain boundedness condition related to $\m$. These two characterizations turn out to be equivalent, and we will be using them interchangeably in our discussion.

\begin{example} 
	In the case where $K = \Q$, the ring of integers is $\Z$. The ideal class group of $\Q$ is trivial, so the Hilbert class field is $\Q$ itself. Given an ideal $(m) \subset \Z$, the ray class group of modulus $m$ is isomorphic to $(\Z/m\Z)^\times$, and the corresponding ray class field is $\Q(\mu_m)$, where $\mu_m \subset \C^\times$ denotes the subgroup of $m$-th roots of unity.
\end{example}

Returning to the discussion of Gaussian periods, note that $e\left(\frac{k}{n}\right)$ is an $n$-th root of unity for every integer $k$. Thus every summand in $\eta_{n, \omega}(k)$ sits inside the $n$-th cyclotomic field $\Q[n] = \Q(\mu_n)$. Since $\omega$ is assumed to be in $(\Z/n\Z)^\times \cong \Gal(\Q[n]/\Q)$, then the Gaussian periods of modulus $n$ and generator $\omega$ are sums over the Galois action of the cyclic subgroup $\langle \omega \rangle \subset (\Z/n\Z)^\times$. In particular, this implies that $\eta_{n, \omega}(k) \in \Q[n]^{\langle \omega \rangle}$, the subfield of $\Q[n]$ fixed by the action of $\langle \omega \rangle$.

In fact, Gaussian periods are not only \emph{contained} in subfields of ray class fields, but they are \emph{generators} of those subfields. These subfields are important objects of study in number theory, as they are still abelian extensions of $\Q$ ramified only at the primes dividing the modulus of the ray class field; they just aren't the maximal such extension. With this perspective in mind, we see that the Duke--Garcia--Lutz Theorem shows how these generators are distributed in $\C$, as well as a description of their asymptotic behavior.

\section{Observations and Generalizations: Supercharacter Theory Perspective}

\label{sec:supchar}

In this section, we explore certain aspects of the Duke--Garcia--Lutz Theorem from the supercharacter theory perspective, as well as prove a generalization of this theorem. As the Duke--Garcia--Lutz Theorem will be discussed at length in this section, we recommend the reader review Theorem \ref{thm:DGL} above. 

\subsection{Gaussian Periods as Traces of Special Unitary Matrices}

\label{sec:tracemaps}

We first explore some observations made by the authors in sections 5 and 6 of \cite{Menagerie}. These observations relate to Gaussian period plots of modulus $n$ and generator $\omega$ when in the setting of the Duke--Garcia--Lutz Theorem. 

In particular, they discuss the Laurent polynomials $g_d$ showing up in the theorem, where $d$ is the multiplicative order of $\omega$. They start by noting that when $d$ is a prime, then $g_d$ has an easily described form. Since the cyclotomic polynomial $\Phi_d(x) = x^{d - 1} + x^{d - 2} + \cdots + x + 1$ when $d$ is prime, then we get the following: $$g_d(z_1, \ldots, z_{d - 1}) = z_1 + \cdots + z_{d - 1} + \frac{1}{z_1 z_2 \cdots z_{d - 1}}.$$ Because each $z_i$ is an element of $\T$ (i.e. a complex number on the unit circle), we can view the sum on the right-hand side as being the trace of some $d \times d$ special unitary matrix. This is because any special unitary matrix $U \in \SU(d)$ has eigenvalues $\{\lambda_1, \ldots, \lambda_d\}$, where $\lambda_i \in \T$ and $$\lambda_d = \frac{1}{\prod_{i = 1}^{d - 1} \lambda_i}.$$ Since the trace of a matrix is the sum of its eigenvalues, the trace of any $U \in \SU(d)$ is given as some output of $g_d$. 

There is, in fact, a little more to this story. In writing the code needed for plotting Gaussian periods, it seems that everyone (including this author) decided simply to plot all of the Gaussian periods at once. That is, given an $n$ and $\omega$, most algorithms that were written would compute all of the Gaussian periods, plot them all at once, and then return the result. However, when one instead plots the Gaussian periods $\eta_{n, \omega}(k)$ in batches instead of all at once, an interesting behavior takes place. This behavior was first discovered (after a happy coding mishap) by Benjamin Young, and we formalize this new perspective below. 

Given a modulus $n$ and element $\omega$ of multiplicative order $d$ in $(\Z/n\Z)^\times$, choose a constant $C$ that is small relative to $n$ (the author recommends $C \approx \sqrt{n}$, though sometimes the behavior is more clear with smaller or larger $C$). We then create $\lceil n/C \rceil$ ``frames'' by plotting the Gaussian periods $\eta_{n, \omega}(k)$ in batches of size approximately $C$. That is, the $f$-th frame will be the plotted image of $\eta_{n, \omega}(k)$ for all $0 \leq k < f C$. In this way, every frame adds approximately $C$ new points to the Gaussian period plot. Stringing these frames together gives an animation showing the behavior of Gaussian periods as they fill the Gaussian period plot $\img(\eta_{n, \omega})$. 

\begin{example}
	We consider the $d$-sided hypocycloid case from the Duke--Garcia--Lutz Theorem; that is, $n$ is a power of an odd prime and $d$ is itself also a prime, causing $\img(\eta_{n, \omega})$ to be contained within the image of a hypocycloid with $d$ sides. If we instead plot these Gaussian periods in frames using the constant $C = \sqrt{n}$, it becomes apparent that a $(d - 1)$-sided hypocycloid rolls counterclockwise along the inner boundary of the $d$-sided hypocycloid. For example, if $d = 3$, then a 2-sided hypocyloid (i.e. a straight line) rolls along the inside of a 3-sided hypocycloid, and when $d = 5$, a 4-sided hypocycloid rolls along in the inside of a 5-sided hypocycloid. We include some still frames of this phenomenon in Figure \ref{fig:AnimationStillFrames}. 
\end{example}

\begin{figure}[h!]
	\centering
	\begin{subfigure}[b]{0.3\linewidth}
		\includegraphics[width=\linewidth]{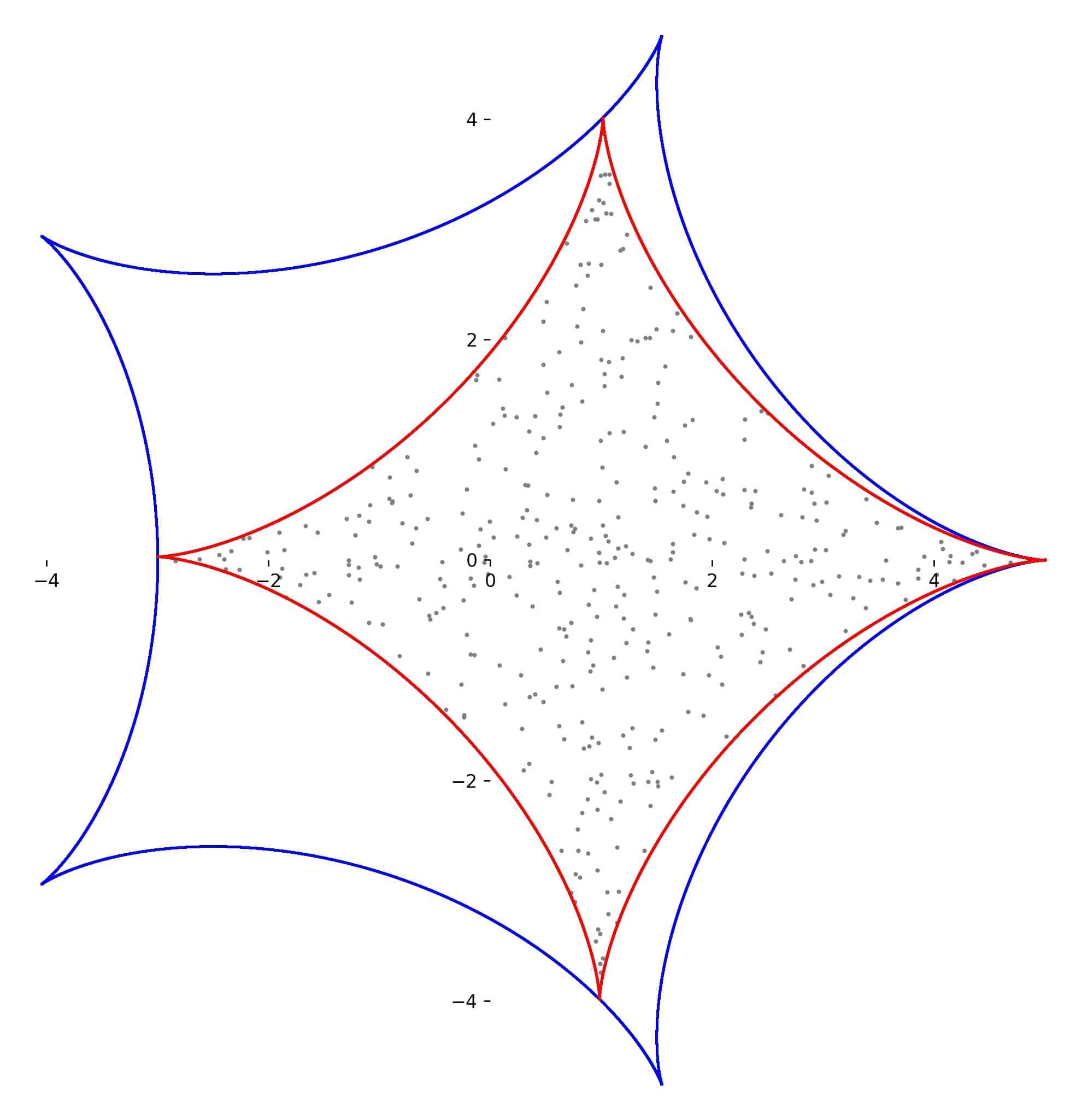}
		\caption{Up to $\lfloor \sqrt{n} \rfloor$}
	\end{subfigure}
	\begin{subfigure}[b]{0.3\linewidth}
		\includegraphics[width=\linewidth]{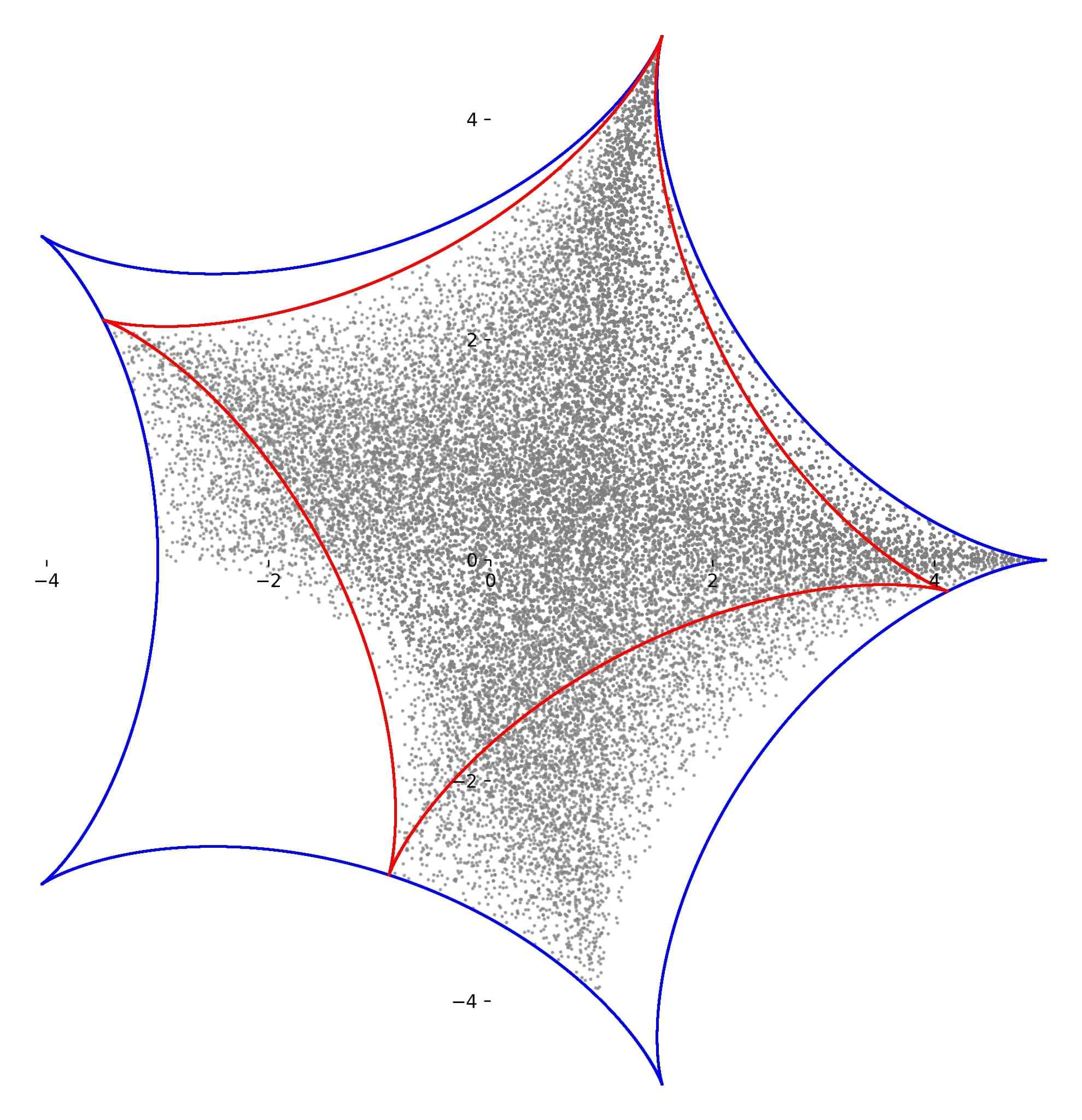}
		\caption{Up to $80 \lfloor \sqrt{n} \rfloor$}
	\end{subfigure}
	\begin{subfigure}[b]{0.3\linewidth}
		\includegraphics[width=\linewidth]{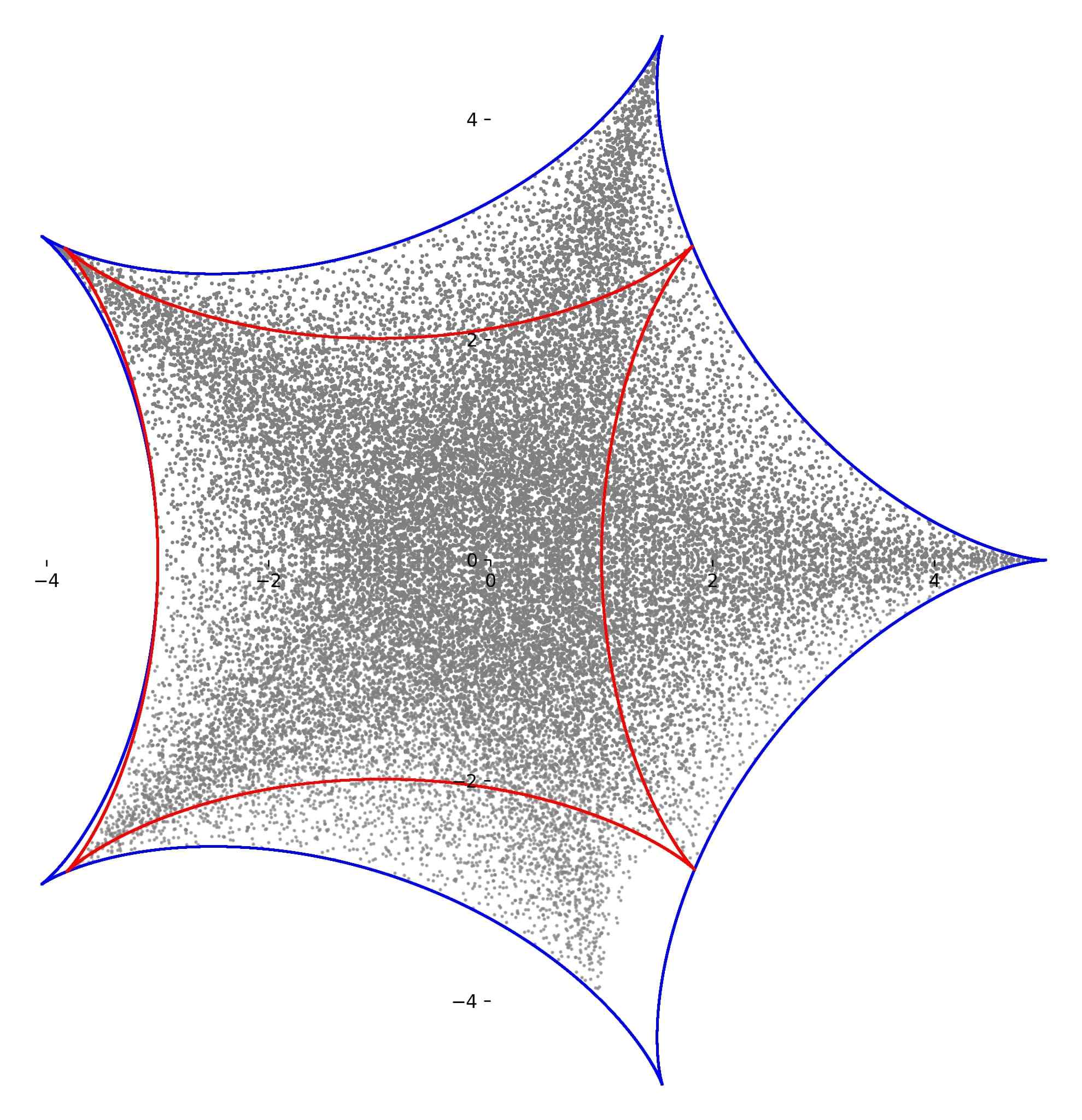}
		\caption{Up to $200 \lfloor \sqrt{n} \rfloor$}
	\end{subfigure}
	
	\caption{A 4-sided hypocycloid rolling along the inside of a 5-sided hypocycloid when $n = 11^5$ and $\omega = 37107$}
	\label{fig:AnimationStillFrames}
\end{figure}

The behavior above can be described more generally by looking at the following homomorphism: $$\varphi_m: \text{U}(1) \times \SU(m - 1) \into \SU(m), \hspace{.4in} (e^{i \theta}, U) \mapsto \diag(e^{i (m - 1) \theta}, e^{-i \theta} U),$$ where $\diag(e^{i (m - 1) \theta}, e^{-i \theta} U)$ represents the block-diagonal matrix with the $1 \times 1$ block $e^{i (m - 1) \theta}$ and the $(m-1) \times (m-1)$ block $e^{-i \theta} U$. Thus we see that $\Tr(\varphi_m(e^{i \theta}, U)) = e^{i (m - 1) \theta} + e^{-i \theta} \Tr(U)$. Since $U \in \SU(m-1)$, then $\Tr(U)$ is contained in an $(m-1)$-sided hypocycloid. Additionally, $e^{i (m - 1) \theta}$ is a element which moves counterclockwise on the unit circle as $\theta$ goes from $0$ to $2\pi$, and the multiplicative factor of $e^{-i \theta}$ causes $\Tr(U)$ to rotate clockwise as $\theta$ is increasing. Thus, if we allow $U \in \SU(m-1)$ to vary and have $\theta$ increase from $0$ to $2\pi$, the result is a filled-in $(m-1)$-sided hypocycloid rolling counterclockwise along the interior of an $m$-sided hypocycloid. 

The relationship between this homomorphism and example 2 is simple enough to see. For a given prime power $n$, a prime $d$, $\omega$ of order $d$, and $k \in (\Z/n\Z)$, recall the definition of a Gaussian period: $$\eta_{n, \omega}(k) = e^{\frac{2 \pi i k}{n}} + e^{\frac{2 \pi i \omega k}{n}} + \cdots + e^{\frac{2 \pi i \omega^{d - 1} k}{n}}.$$ If we let $\theta = \frac{2 \pi k}{(d - 1)n}$, then we have the following: $$\eta_{n, \omega}(k) = e^{i (d - 1) \theta} + e^{-i \theta} \left( e^{i \theta ((d - 1) \omega + 1)} + \cdots + e^{i \theta ((d - 1) \omega^{d - 1} + 1)} \right).$$ One can then verify that $$\prod_{j = 1}^{d - 1} e^{i \theta ((d - 1) \omega^j + 1)} = 1,$$ showing that $e^{i \theta ((d - 1) \omega + 1)}$, \ldots, $e^{i \theta ((d - 1) \omega^{d - 1} + 1)}$ are the eigenvalues of some matrix $U_k \in \SU(d - 1)$. Thus $\eta_{n, \omega}(k) = e^{i (d - 1) \theta} + e^{-i \theta} \Tr(U_k)$, which shows that $\eta_{n, \omega}(k) = \Tr(\varphi_d(e^{i \theta}, U_k))$ (i.e. that $\eta_{n, \omega}$ is the trace of some matrix in $\img(\varphi_d)$). Additionally, note that $\theta = \frac{2 \pi i k}{(d - 1)n}$ increases as $k$ increases, which explains the smooth rolling behavior of the $(d - 1)$-sided hypocycloid along the inner boundary of the $d$-sided hypocycloid. 

The discussion above proves the following proposition.

\begin{proposition}
	\label{prop:animationswithrollinghypocycloids}
	Let $n = p^a$ be a power of a prime, $d$ a prime dividing $p - 1$, and $\omega \in (\Z/n\Z)^\times$ an element of order $d$. Let $k \in \Z/n\Z$. Then the value of the $k$-th Gaussian period of modulus $n$ and generator $\omega$ is contained in a $(d - 1)$-sided hypocycloid centered at $e\left(\frac{k}{n}\right)$ and rotated by a factor of $e\left(\frac{-k}{(d - 1) n}\right)$. That is, $$\eta_{n, \omega}(k) \in \left\{e\left(\frac{k}{n}\right) + h \cdot e\left(\frac{-k}{(d - 1) n}\right) \st h \in H_{d - 1}\right\},$$ where $H_{d - 1}$ represents the filled-in $(d - 1)$-sided hypocycloid centered at the origin in the complex plane.
\end{proposition}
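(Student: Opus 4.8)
The plan is to turn the computation sketched just before the proposition into a proof; the only genuinely new ingredient is a divisibility fact about $\sum_{j=0}^{d-1}\omega^j$ modulo $p^a$. I would start from the definition $\eta_{n,\omega}(k) = \sum_{j=0}^{d-1} e\!\left(\frac{\omega^j k}{n}\right)$ and split off the $j=0$ term, which equals $e(k/n)$ and will serve as the center of the hypocycloid. For $1 \le j \le d-1$, using the elementary identity $\frac{\omega^j k}{n} = \frac{-k}{(d-1)n} + \frac{k((d-1)\omega^j + 1)}{(d-1)n}$, I would rewrite each remaining summand as
\[
e\!\left(\frac{\omega^j k}{n}\right) \;=\; e\!\left(\frac{-k}{(d-1)n}\right)\cdot \mu_j, \qquad \text{where}\qquad \mu_j := e\!\left(\frac{k\bigl((d-1)\omega^j + 1\bigr)}{(d-1)n}\right) \in \T .
\]
One notes that $\mu_j$ depends only on $\omega^j \bmod n$, so no ambiguity arises from picking an integer representative of $\omega^j$, and one arrives at $\eta_{n,\omega}(k) = e(k/n) + e\!\left(\frac{-k}{(d-1)n}\right)\sum_{j=1}^{d-1}\mu_j$.

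The heart of the matter is the claim $\prod_{j=1}^{d-1}\mu_j = 1$. Granting it, $\mu_1,\dots,\mu_{d-1}$ are unit-modulus complex numbers of product $1$, hence the eigenvalues of $\diag(\mu_1,\dots,\mu_{d-1}) \in \SU(d-1)$, so $\sum_{j=1}^{d-1}\mu_j$ is the trace of an element of $\SU(d-1)$ and therefore lies in the filled-in hypocycloid $H_{d-1}$, using the identification of $H_{d-1}$ with the set of such traces recalled before the proposition. To prove the product relation, a short computation gives $\prod_{j=1}^{d-1}\mu_j = e\!\left(\frac{k}{n}\bigl(1 + \sum_{j=1}^{d-1}\omega^j\bigr)\right) = e\!\left(\frac{k}{n}\sum_{j=0}^{d-1}\omega^j\right)$, so it suffices to show $n \mid \sum_{j=0}^{d-1}\omega^j$. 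For this I would combine the telescoping relation $(\omega - 1)\sum_{j=0}^{d-1}\omega^j = \omega^d - 1 \equiv 0 \pmod{p^a}$ with the fact that $\omega - 1$ is a unit modulo $n = p^a$: because $d \mid p-1$, the reduction of $\omega$ modulo $p$ has exact order $d$, and $d$ being prime forces $d > 1$, so $\omega \not\equiv 1 \pmod p$ and hence $\omega - 1 \in (\Z/p^a\Z)^\times$. Cancelling $\omega - 1$ gives $\sum_{j=0}^{d-1}\omega^j \equiv 0 \pmod{p^a}$, as required.

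Putting the two steps together, $\eta_{n,\omega}(k) = e(k/n) + h\cdot e\!\left(\frac{-k}{(d-1)n}\right)$ for some $h \in H_{d-1}$, which is the claimed description of $\eta_{n,\omega}(k)$ as a point of the $(d-1)$-sided hypocycloid centered at $e(k/n)$ (the rotation factor agreeing with the $e^{-i\theta}$ appearing in the discussion before the proposition). I expect the main obstacle to be the divisibility $n \mid \sum_{j=0}^{d-1}\omega^j$ and, at a more bookkeeping level, keeping the unit-modulus rescaling factors straight through the factorization; once the trace description of $H_{d-1}$ is in hand the remainder is formal. The degenerate case $d = 2$ (where $\omega \equiv -1 \pmod{p^a}$, $H_1 = \{1\}$, and $\eta_{n,\omega}(k) = 2\cos(2\pi k/n)$ is real) is handled by the identical computation.
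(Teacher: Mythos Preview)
Your proposal is correct and follows essentially the same route as the paper: the paper sets $\theta = \frac{2\pi k}{(d-1)n}$, factors $\eta_{n,\omega}(k) = e^{i(d-1)\theta} + e^{-i\theta}\sum_{j=1}^{d-1} e^{i\theta((d-1)\omega^j+1)}$, and then asserts (``one can then verify'') that the product of the summands equals $1$, so that the sum is a trace of an $\SU(d-1)$ matrix. Your argument is a faithful unpacking of this, with the pleasant addition that you actually supply the verification via the divisibility $p^a \mid \sum_{j=0}^{d-1}\omega^j$ (using that $\omega-1$ is a unit modulo $p^a$), and you correctly flag that the rotation factor coming out of the computation is $e\!\left(\frac{-k}{(d-1)n}\right)$, matching the $e^{-i\theta}$ in the paper's discussion rather than the sign in the proposition statement.
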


One might now wonder what occurs in the case where $d$ is not itself prime. Unfortunately, the geometrical shapes that one gets are much harder to describe succinctly, though the general behavior of ``a smaller shape rolling counterclockwise along the boundary'' seems to hold based on experimentation. But again, describing the ``smaller shape'' remains as elusive as describing the overall shape of the Gaussian period plots in these cases. It is our hope that someone more knowledgeable about this type of geometry might be able to offer some insight.

\subsection{Generalization of Duke--Garcia--Lutz}

\label{sec:DGLgeneralization}

We now return to viewing Gaussian periods as values of a supercharacter theory. Recall that the supercharacter theory used in Gaussian periods is constructed using a cyclic subgroup $\langle \omega \rangle \subset (\Z/n\Z)^\times$, where the supercharacters and superclasses are defined using a compatible action of $\langle \omega \rangle$ on the group $\Z/n\Z$ and its group of characters. We can generalize this setting in the following way.

Let $G = (\Z/n\Z)^m$. The irreducible characters of $G$ are simply products of irreducible characters on $\Z/n\Z$; in particular, given $\bold{x} = (x_1, \ldots, x_m) \in G$, we obtain an irreducible character $\chi_{\bold{x}}$ such that for $\bold{y} = (y_1, \ldots, y_m) \in G$, we have the following: $$\chi_{\bold{x}}(\bold{y}) = \chi_{x_1}(y_1) \cdot \chi_{x_2}(y_2) \cdots \chi_{x_m}(y_m) = e\left( \frac{x_1 y_1 + \cdots + x_m y_m}{n} \right).$$ The analogy of $(\Z/n\Z)^\times$ in this setting is the set of automorphisms $\Aut(G)$, which is isomorphic to $\GL_m(\Z/n\Z)$. Thus, we choose a matrix $A \in \GL_m(\Z/n\Z)$ of order $d$, and take the cyclic subgroup $\Gamma := \langle A \rangle$. We then define the (right) action of $\Gamma$ on $G$ to be $\gamma \cdot \bold{x} = \gamma^T \bold{x}$, where $\gamma \in \Gamma$, $\gamma^T$ is the transpose of the matrix $\gamma$, and $\bold{x} \in G$ (viewed as a column vector). Let $\sK$ be the partition of $G$ corresponding to the orbits of this action. Using the same notation, we define the (right) action of $\Gamma$ on the irreducible characters $\hat G$ to be $\gamma \cdot \chi_{\bold{x}} = \chi_{\gamma^\inv \bold{x}}$ for $\chi_{\bold{x}} \in \hat G$. Let $\sX$ be the partition of $\hat G$ corresponding to the orbits of this action. Then $(\sX, \sK)$ defines a supercharacter theory on $(\Z/n\Z)^m$. This follows from a more general statement by Brumbaugh et al. at the beginning of Section 2 in \cite{ExpSumsBrumbaugh}. 

While the authors of that paper did not state their result as a proposition, we do so here for the sake of clarity.

\begin{proposition}[From Section 2 of \cite{ExpSumsBrumbaugh}]
	Let $G = (\Z/n\Z)^m$, and let $\hat G$ be its group of characters. Let $\Gamma$ be a subgroup of $\GL_m(\Z/n\Z)$, and let $\sK$ and $\sX$ be the partitions defined in the preceding paragraphs for $G$ and $\hat G$, respectively. Then $(\sX, \sK)$ is a supercharacter theory on $G$.
\end{proposition}

\begin{remark}
	In the paper of Brumbaugh et al., they require that $\Gamma$ be a symmetric subgroup---i.e. that $\Gamma^T = \Gamma$. However, we note that this assumption is not used until after the supercharacter theory axioms were proved, so it is not necessary for the statement of the proposition.
\end{remark}

\begin{remark}
	We also note that the group action defined in \cite{ExpSumsBrumbaugh} differs slightly from the one defined above. However, this slight difference does not alter the details of the proof.
\end{remark}

Now, each irreducible character on $G$ is given by a choice of vector $\bold{x} \in (\Z/n\Z)^m$. In the case $m = 1$ (i.e. the case of Gaussian periods), we've seen that our choice of matrix $A$ simply describes scalar multiplication. In order to get the vectors $\bold{x}$ used for the irreducible characters $\chi_{\bold{x}}$, we used the set of vectors given by the orbit $\langle A \rangle \cdot 1$, where $1$ is the identity element in $(\Z/n\Z)^\times = \GL_1(\Z/n\Z)$. As noted in Proposition 2.2 of \cite{DGL}, if we instead choose any other nonzero $r \in \Z/n\Z$ for the orbit $\langle A \rangle \cdot r$, the corresponding plot of supercharacter values can be embedded within the plot corresponding to $r = 1$. This then justifies our use of the orbit $\langle A \rangle \cdot 1$ to get our characters, since it is the most general possible choice. Analogously for the case where $m > 1$, the vectors used for the irreducible characters will be the set of vectors given by the orbit $\langle A \rangle \cdot \bold{1}$, where $\bold{1} = (1, 1, \ldots, 1)^T \in (\Z/n\Z)^m$ is viewed as a column vector. 

\begin{definition} 
	For $\bold{x} \in (\Z/n\Z)^m$, we define the following cyclic supercharacter: $$\theta_{n, m, A}: (\Z/n\Z)^m \to \C, \hspace{0.5in} \theta_{n, m, A}(\bold{x}) := \sum_{j = 0}^{d - 1} e\left(\frac{A^j \bullet \bold{x}}{n}\right),$$ where $A^j \bullet \bold{x}$ represents $(A^j \bold{1}) \cdot \bold{x}$; that is, the dot product between $A^j \bold{1}$ and $\bold{x}$. We call $\img(\theta_{n, m, A})$ the \emph{cyclic supercharacter plot for $n$, $m$, and $A$}.
\end{definition}

We provide examples of the resulting cyclic supercharacter plots in Figure \ref{fig:SuperCharacterPlots}.

\begin{figure}[h!]
	\centering
	\begin{subfigure}[b]{0.32\linewidth}
		\includegraphics[width=\linewidth]{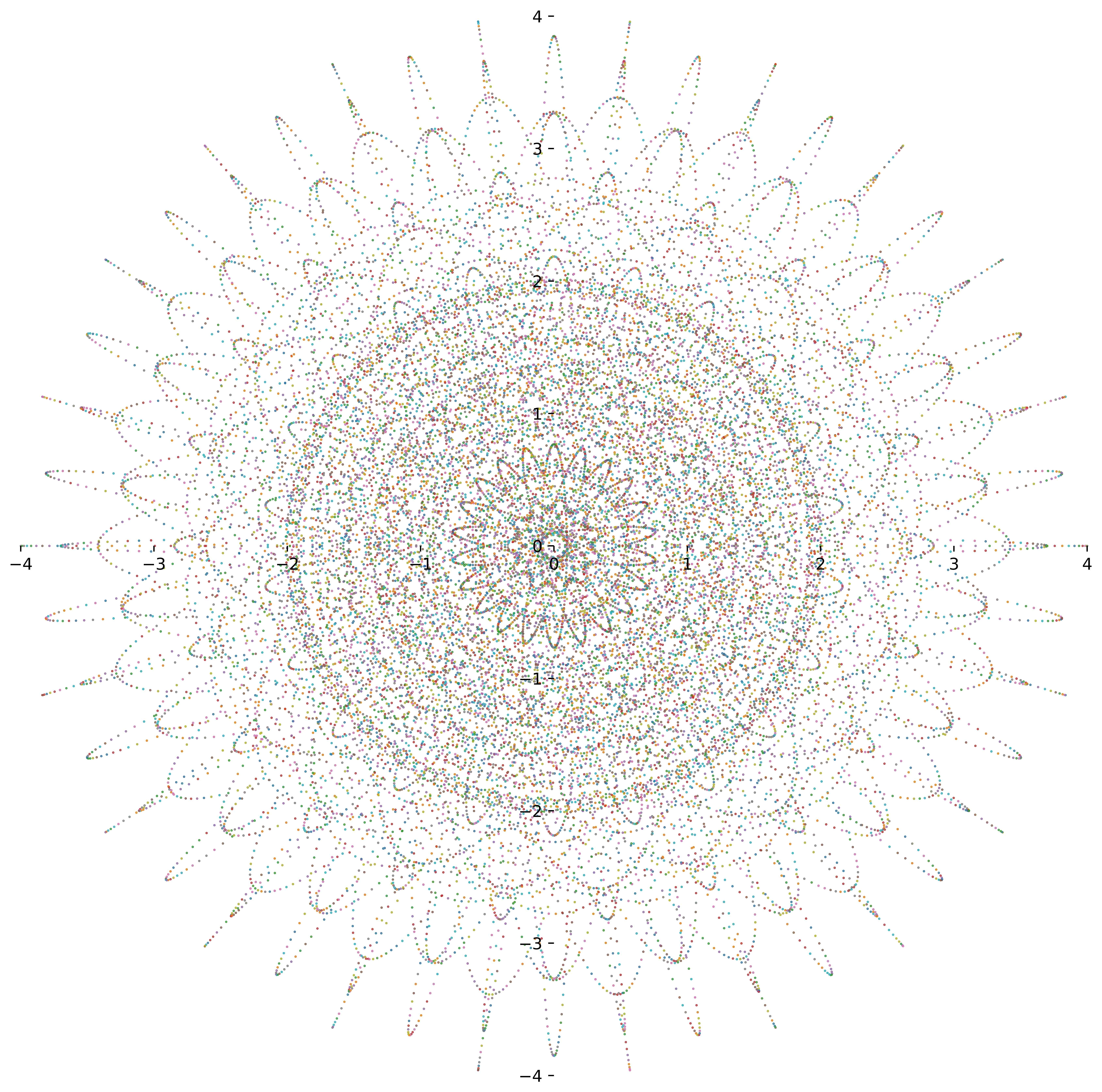}
		\caption{$n = 715$, $A = \left( \begin{smallmatrix} 0 & 2 \\ 61 & 121 \end{smallmatrix} \right)$}
	\end{subfigure}
	\begin{subfigure}[b]{0.32\linewidth}
		\includegraphics[width=\linewidth]{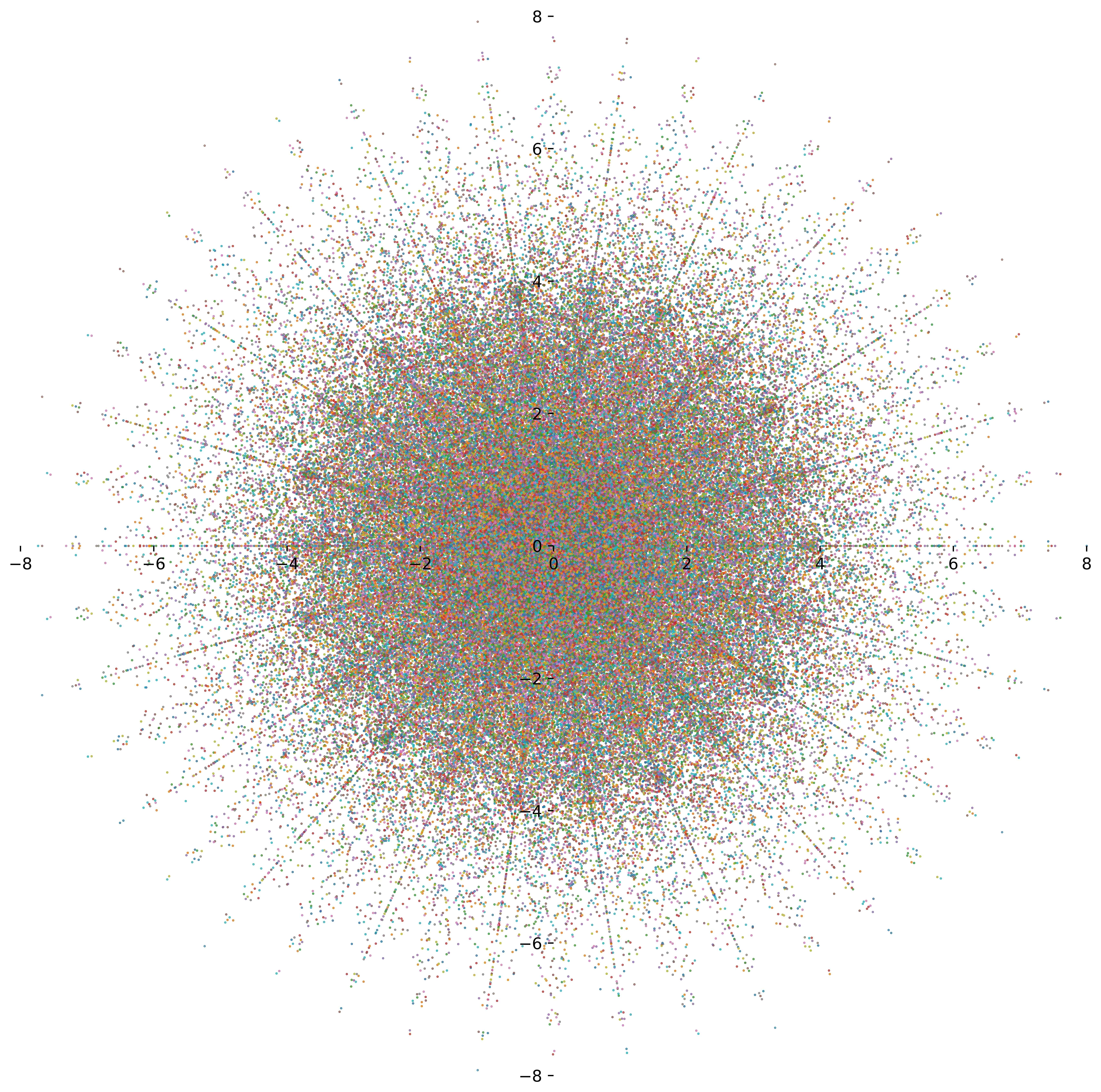}
		\caption{$n = 1155$, $A = \left( \begin{smallmatrix} 310 & 229 \\ 458 & 295 \end{smallmatrix} \right)$}
	\end{subfigure}
	\begin{subfigure}[b]{0.32\linewidth}
		\includegraphics[width=\linewidth]{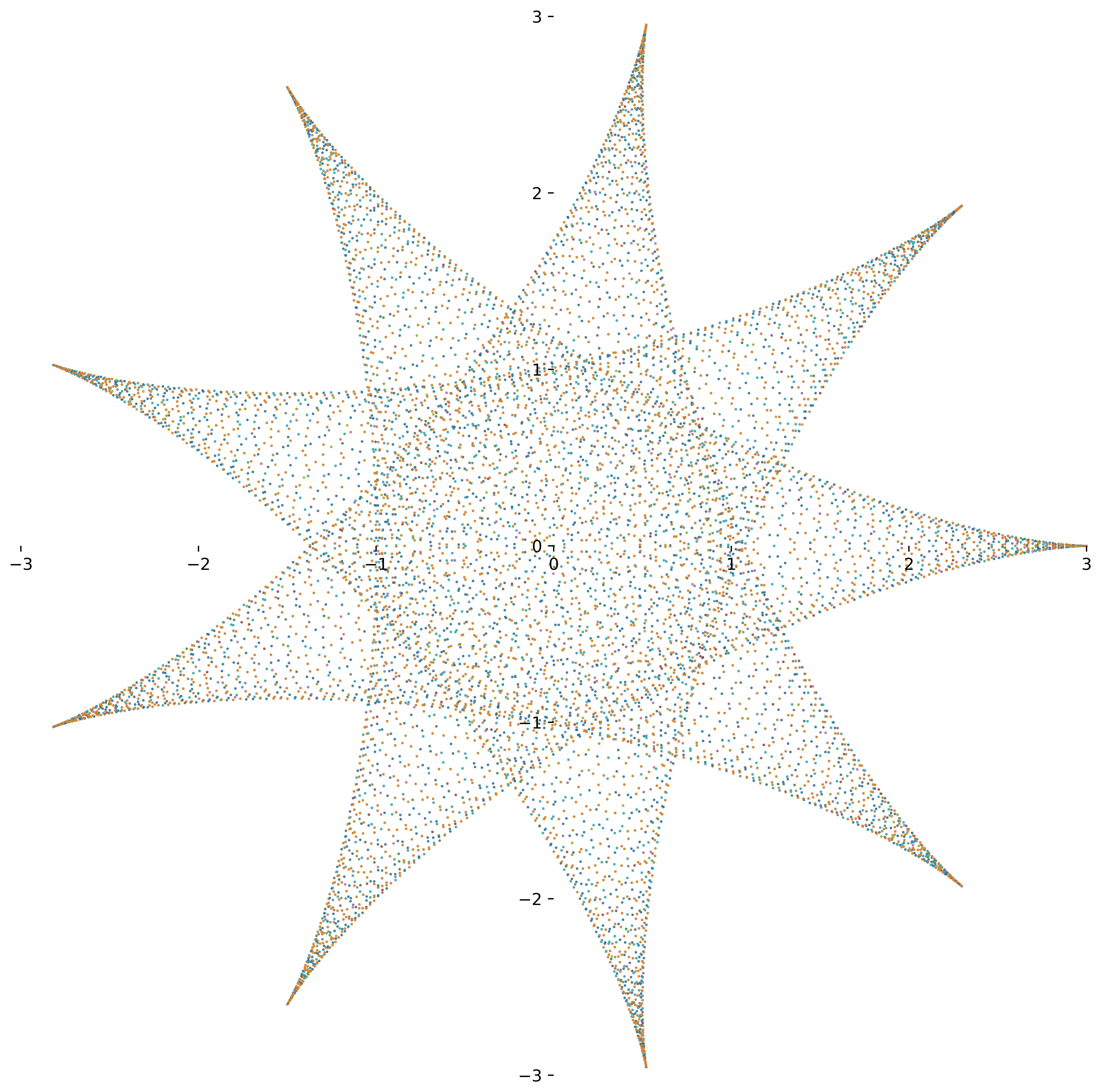}
		\caption{$n = 273$, $A = \left( \begin{smallmatrix} 51 & 224 & 140 \\ 168 & 268 & 14 \\ 203 & 203 & 170 \end{smallmatrix} \right)$}
	\end{subfigure}

	\caption{Examples of cyclic supercharacter plots for various $n$, $m$, and $A$.}
	\label{fig:SuperCharacterPlots}
\end{figure}

Our goal is to prove the following generalization of the Duke--Garcia--Lutz Theorem.

\begin{theorem}
	\label{thm:DGLgeneralize}
	Let $n \in \Z_{\geq 2}$ and $m \in \Z_{\geq 1}$. Suppose $d \mid (\# \GL_m(\Z/n\Z))$, and choose a matrix $A \in \GL_m(\Z/n\Z)$ of order $d$ such that $\Phi_d(A) = 0$ in $\Mat_m(\Z/n\Z)$, where $\Phi_d$ is the $d$-th cyclotomic polynomial. Let $\theta_{n, m, A}: (\Z/n\Z)^m \to \C$ be the cyclic supercharacter corresponding to $n$, $m$, and $A$. Then $\img(\theta_{n, m, A})$ is contained in the image of the Laurent polynomial function $g_d: \T^{\varphi(d)} \to \C$ defined by the following: $$g_d(z_1, z_2, \ldots, z_{\varphi(d)}) = \sum_{k = 0}^{d - 1} \prod_{j = 0}^{\varphi(d) - 1} z_{j+1}^{c_{jk}},$$ where the $c_{jk}$ are given by the relations $$x^k \equiv \sum_{j = 0}^{\varphi(d) - 1} c_{jk} x^j \mod \Phi_d(x).$$ Additionally, for a fixed order $d$, and as $n$ tends to infinity---assuming there exists a matrix $A \in \GL_m(\Z/n\Z)$ such that $\Phi_d(A) = 0 \mod n$---every nonempty open disk in the image of $g_d$ eventually contains points in $\img(\theta_{n, m, A})$. In other words, the image of $g_d$ is ``filled out'' by cyclic supercharacter plots as the modulus grows without bound.
\end{theorem}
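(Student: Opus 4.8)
The plan is to establish the two assertions of the theorem separately: the containment $\img(\theta_{n,m,A})\subseteq\img(g_d)$ follows from a purely algebraic identity built from the hypothesis $\Phi_d(A)=0$, while the ``filling out'' statement follows from an equidistribution argument in the spirit of Duke--Garcia--Lutz, pushed forward through the continuous map $g_d$.

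For the containment, fix $\bold{x}\in(\Z/n\Z)^m$. Since the integers $c_{jk}$ are defined by $x^k\equiv\sum_{j=0}^{\varphi(d)-1}c_{jk}\,x^j\pmod{\Phi_d(x)}$ and $\Phi_d(A)=0$ in $\Mat_m(\Z/n\Z)$, substituting $x=A$ gives $A^k=\sum_{j=0}^{\varphi(d)-1}c_{jk}A^j$ in $\Mat_m(\Z/n\Z)$ for all $k\ge 0$. Applying both sides to $\bold{1}$ and then pairing with $\bold{x}$ yields the congruence $A^k\bullet\bold{x}\equiv\sum_{j=0}^{\varphi(d)-1}c_{jk}\,(A^j\bullet\bold{x})\pmod n$. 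Setting $z_{j+1}:=e\!\left((A^j\bullet\bold{x})/n\right)\in\T$ and using that $t\mapsto e(t/n)$ is a homomorphism $\Z\to\T$, this gives $e\!\left((A^k\bullet\bold{x})/n\right)=\prod_{j=0}^{\varphi(d)-1}z_{j+1}^{c_{jk}}$, and summing over $k=0,\dots,d-1$ produces exactly $\theta_{n,m,A}(\bold{x})=g_d(z_1,\dots,z_{\varphi(d)})\in\img(g_d)$. (This step uses neither the invertibility of $A$ nor that its order is exactly $d$, only $\Phi_d(A)=0$.)

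For the filling out, let $\Psi_{n,A}\colon(\Z/n\Z)^m\to\T^{\varphi(d)}$ be the homomorphism $\bold{x}\mapsto\bigl(e((A^j\bullet\bold{x})/n)\bigr)_{j=0}^{\varphi(d)-1}$ with (finite) image $H_{n,A}$, and let $\mu_{n,A}$ be the uniform probability measure on $H_{n,A}$. I would show that $\mu_{n,A}$ converges weakly to the Haar probability measure on $\T^{\varphi(d)}$ as $n\to\infty$ over valid pairs $(n,A)$; by Weyl's criterion it is enough to show, for each fixed nonzero $\bold{v}=(v_0,\dots,v_{\varphi(d)-1})\in\Z^{\varphi(d)}$, that $\widehat{\mu_{n,A}}(\bold{v})\to 0$. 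Writing $P(x):=\sum_j v_j x^j$, one computes $\widehat{\mu_{n,A}}(\bold{v})=n^{-m}\sum_{\bold{x}\in(\Z/n\Z)^m}e\bigl((P(A)\bold{1})\cdot\bold{x}/n\bigr)$, and since the sum runs over the whole group it equals $1$ if $P(A)\bold{1}\equiv\bold{0}\pmod n$ and $0$ otherwise; so the task reduces to showing $P(A)\bold{1}\not\equiv\bold{0}\pmod n$ for $n$ large. This is exactly where $\Phi_d(A)=0$ is used: because $\deg P<\varphi(d)=\deg\Phi_d$ and $\Phi_d$ is irreducible over $\Q$, the resultant $c:=\Res(P,\Phi_d)$ is a nonzero integer and $uP+w\Phi_d=c$ for suitable $u,w\in\Z[x]$; evaluating at $x=A$ and using $\Phi_d(A)=0$ gives $u(A)P(A)=cI$ in $\Mat_m(\Z/n\Z)$, so $P(A)\bold{1}\equiv\bold{0}$ would force $c\bold{1}\equiv\bold{0}$, i.e.\ $n\mid c$, which is impossible once $n>|c|$. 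Hence $\mu_{n,A}\to$ Haar weakly, so $(g_d)_*\mu_{n,A}\to(g_d)_*(\text{Haar})$ weakly; the limit is a probability measure whose support is the compact set $\img(g_d)$. Thus, if $D\subseteq\img(g_d)$ is a nonempty open disk, then $(g_d)_*(\text{Haar})(D)>0$, so weak convergence gives $(g_d)_*\mu_{n,A}(D)>0$ for all large $n$; then some $\bold{x}\in(\Z/n\Z)^m$ has $\Psi_{n,A}(\bold{x})\in g_d^{-1}(D)$, and for that $\bold{x}$ the identity of the previous paragraph gives $\theta_{n,m,A}(\bold{x})=g_d(\Psi_{n,A}(\bold{x}))\in D$, which is the claim.

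The step needing the most care is not any single deep estimate --- indeed, because the exponential sums in question run over all of $(\Z/n\Z)^m$ they collapse to $0$ or $n^m$, so no nontrivial character-sum bounds are needed --- but rather the bookkeeping that makes the convergence $\mu_{n,A}\to$ Haar \emph{uniform in the choice of $A$}: the threshold past which $\widehat{\mu_{n,A}}(\bold{v})=0$ is $|\Res(P,\Phi_d)|$, which depends only on $\bold{v}$ and not on $A$, so the resulting weak convergence (measured, say, in the L\'evy--Prokhorov metric) is uniform over valid $A$, and the ``eventually'' in the conclusion is genuine. One should also record that the hypothesis set is nonvacuous for every fixed $d$ and $m$: for each prime $n\equiv 1\pmod d$ the scalar matrix $A=\zeta I$, with $\zeta\in\F_n^\times$ a primitive $d$-th root of unity, lies in $\GL_m(\Z/n\Z)$, has order $d$, and satisfies $\Phi_d(A)=0$, and there are infinitely many such $n$ by Dirichlet's theorem. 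Apart from this, the argument is a faithful adaptation of the Duke--Garcia--Lutz strategy, specialized to a setting where the relevant sums are over a full group and hence softer.
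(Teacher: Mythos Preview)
Your proof is correct and follows essentially the same route as the paper's: the containment via $A^k=\sum_j c_{jk}A^j$ from $\Phi_d(A)=0$, then equidistribution on $\T^{\varphi(d)}$ via Weyl's criterion, with the key reduction that the exponential sum over all of $(\Z/n\Z)^m$ vanishes unless $P(A)\bold{1}\equiv\bold{0}\pmod n$, which is ruled out for large $n$ by a B\'ezout/resultant identity $u(A)P(A)\equiv cI$ coming from $\gcd(P,\Phi_d)=1$ over $\Q$. The only differences are packaging---you phrase things in terms of weak convergence of pushforward measures and name the constant as $\Res(P,\Phi_d)$, whereas the paper works with discrepancy and an unnamed integer $t$ from clearing denominators---and your added remarks on uniformity in $A$ and nonvacuousness of the hypothesis set are welcome but not in the paper's proof.
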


Before we begin the proof of Theorem \ref{thm:DGLgeneralize}, we make some necessary detours. First, a brief remark about the areas in which this generalizes the Duke--Garcia--Lutz Theorem (which we shorten to DGL Theorem from here on).

\begin{remark}
	\label{rmk:DGLgeneralizeexplain}
	There are two directions in which this theorem generalizes the original DGL Theorem. First, it concerns the group $(\Z/n\Z)^m$ for any $m \geq 1$, rather than just the case $m = 1$. Second, it allows for composite moduli $n$, with the restriction that there is some matrix $A \in \GL_m(\Z/n\Z)$ which both has order $d$ and satisfies $\Phi_d(A) = 0 \mod n$. To see why this new condition on $n$ truly is a generalization, recall that the DGL Theorem assumes the modulus is $p^e$ for some power of an odd prime $p$. The group being studied is then $\Z/p^e\Z$, and the group of automorphisms is $$\GL_1(\Z/p^e\Z) \cong (\Z/p^e\Z)^\times.$$ Since $(\Z/p^e\Z)^\times$ is cyclic when $p$ is odd, then if $d \mid \# (\Z/p^e\Z)^\times$, there is always an element $\omega \in (\Z/p^e\Z)^\times$ of order $d$ (in general, note that there is not always a matrix $A \in \GL_m(\Z/n\Z)$ of order $d$ whenever $d \mid \# \GL_m(\Z/n\Z)$). Additionally, in the case where $m = 1$, the order $d$ divides $p - 1$ if and only if $\Phi_d(\omega) = 0 \mod p^e$. Theorem \ref{thm:DGLgeneralize} then generalizes the DGL Theorem using matrices which satisfy $\Phi_d$ rather than using matrices which have order $d$ (without necessarily satisfying $\Phi_d$).
\end{remark}

\begin{remark}
	\label{rmk:DGLgenpreamble}
	Note that Remark \ref{rmk:DGLgeneralizeexplain} hints at the possibility of another generalization of the DGL Theorem. The fact that $A \in \GL_m(\Z/n\Z)$ has order $d$ implies that minimal polynomial of $A$ must divide $x^d - 1$. When we additionally assume that $m = 1$ and $n$ is a power of an odd prime, then the minimal polynomial is always $\Phi_d(x)$. However, this is not guaranteed when $m > 1$ or when $n$ isn't a power of an odd prime, so one might then wonder if anything can be said about the behavior of $\theta_{n, m, A}$ when $\Phi_d(A) \neq 0 \mod n$. It turns out that while we can still describe the general shape of $\img(\theta_{n, m, A})$, we are no longer guaranteed the asymptotic filling out behavior. We discuss this more in Remark \ref{rmk:DGLGeneralizeRestriction} after the proof.
\end{remark}

An astute reader might have the following question after reading Theorem \ref{thm:DGLgeneralize}: How do we know that there are infinitely many $n$ such that $\GL_m(\Z/n\Z)$ contains a matrix $A$ of order $d$ such that $\Phi_d(A) = 0 \mod n$? This is a natural question, as the statement on the asymptotic behavior of $\theta_{n, m, A}$ doesn't make sense without the existence of such $n$. In the following remark, we provide a simple construction that shows there are infinitely many $n$ with the stated properties.

\begin{remark}
	\label{rmk:existenceofnfornewtheorem}
	Let $m$ and $d$ be given. Since there are infinitely many primes congruent to 1 mod $d$, then we can choose an infinite sequence of odd primes $p_1 < p_2 < p_3 < \ldots$ such that $p_i = 1 \mod d$ for every $i$. For each $i$, choose $\omega_i \in (\Z/p_i \Z)^\times$ such that the order of $\omega_i$ is $d$; such an $\omega_i$ exists because $(\Z/p_i \Z)^\times$ is cyclic of order $p_i - 1$, which is divisible by $d$. We can then choose the matrix $A_i \in \GL_m(\Z/p_i \Z)$ to be the matrix $\diag(\omega_i, \omega_i, \ldots, \omega_i).$ That is, $A_i$ comes from the diagonal embedding $$((\Z/p_i \Z)^\times)^m \into \GL_m(\Z/p_i \Z), \hspace{0.5in} \alpha \mapsto \diag(\alpha, \alpha, \ldots, \alpha).$$ Since $\Phi_d(\omega_i) = 0 \mod p_i$ for every $i$ (as mentioned in Remark \ref{rmk:DGLgeneralizeexplain}), then $\Phi_d(A_i) = 0 \mod p_i$ for every $i$. 
\end{remark}

There are, of course, more interesting examples of moduli $n$ and matrices $A$ such that $\Phi_d(A) = 0 \mod n$. The example above is used simply to show the existence of infinitely many $n$ with the desired properties.

We now work toward the proof of Theorem \ref{thm:DGLgeneralize}, which has two main parts. First, we must show that $\img(\theta_{n, m, A})$ is contained in the $\img(g_d)$, which will come directly from the fact that $\Phi_d(A) = 0 \mod n$. And second, we must show that the $\img(g_d)$ is filled out asymptotically as described. In order to describe the asymptotic behavior of these cyclic supercharacters, we need the following concepts about equidistribution. We note that these concepts are those used by Katz in \cite{Katz+2012}, though we borrow the notational conventions used by Kowalski in \cite{kowalskiequidistribution}.

\begin{definition}
	Let $Y$ a nonempty finite set. Let $s$ be a positive integer, and suppose we have a map $$\Lambda: Y \to [0,1)^s, \hspace{0.5in} \bold{y} \mapsto \Lambda(\bold{y}) := (\Lambda_1(y_1), \Lambda_2(y_2), \ldots, \Lambda_s(y_s)).$$ The \emph{discrepancy} of $(Y, \Lambda)$ is defined to be $$\sup_{B} \Bigg | \frac{ \# \{\bold{y} \in Y \st \Lambda(\bold{y}) \in B\}}{\# Y} - \vol(B) \Bigg |,$$ where the supremum is taken over all boxes $B = [a_1, b_1) \times \cdots \times [a_s, b_s) \subset [0, 1)^s$ and $\vol(B)$ denotes the volume of $B$. We say that a sequence $(Y_t, \Lambda_t)_{t = 1}^\infty$ of nonempty finite sets $Y_t$ and maps $\Lambda_t: Y_t \to [0,1)^s$ is \emph{uniformly distributed mod 1} if the discrepancy of $\Lambda_t$ goes to zero as $t \to \infty$. 
\end{definition}

\begin{remark}
	\label{rmk:correctionofdefn}
	We should note here that this definition differs from the definition given in \cite{DGL, Menagerie} (and several other places in the literature). In particular, the discrepancy in those papers is defined using the proportion $$\frac{\#(\Lambda(Y) \cap B)}{\#\Lambda(Y)}$$ rather than the proportion $$\frac{ \# \{\bold{y} \in Y \st \Lambda(\bold{y}) \in B\}}{\# Y}.$$ That is, they determine the proportion of distinct elements of the image which are in $B$. The correct definition of discrepancy, however, is interested in the number of \emph{indices} $\bold{y}$ whose image is in $B$. These two definitions are not equivalent, as $\Lambda$ is not necessarily injective.\footnote{An anonymous referee pointed out this definition error in a previous draft, and the author would like to thank the referee for this correction.}
\end{remark}

\begin{remark}
	\label{rmk:unifdistimpliesdensityandequidist}
	If $(Y_t, \Lambda_t)_{t = 1}^\infty$ is a sequence which is uniformly distributed mod 1, then it is an easy consequence that $\Lambda_t(Y_t)$ become dense in $[0,1)^s$. In fact, it implies the stronger statement that $\Lambda_t(Y_t)$ becomes \emph{equidistributed} in $[0,1)^s$---i.e. that its points become ``evenly spaced.'' While we care primarily about the density result for the purposes of proving Theorem \ref{thm:DGLgeneralize}, the result on equidistribution is interesting in its own right, and we discuss this briefly in Remark \ref{rmk:equidistributionimplication} after the proof.
\end{remark}

We now offer Weyl's criterion (stated as Lemma 1 in \cite{Menagerie}) for determining if a sequence is uniformly distributed mod 1. This will be a critical lemma in our proof of Theorem \ref{thm:DGLgeneralize}.

\begin{lemma}[Weyl's Criterion]
	\label{lem:Weyl}
	A sequence $(Y_t, \Lambda_t)_{t = 1}^\infty$ of nonempty finite sets $Y_t$ and maps $\Lambda_t: Y_t \to [0,1)^s$ is uniformly distributed mod 1 if and only if for every nonzero $\bold{v} \in \Z^s$, we have $$\lim_{t \to \infty} \frac{1}{\# Y_t} \sum_{\bold{y} \in Y_t} e(\Lambda_t(\bold{y}) \cdot \bold{v}) = 0,$$ where $\Lambda_t(\bold{y}) \cdot \bold{v}$ denotes the usual dot product.
\end{lemma}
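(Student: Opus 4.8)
The plan is to recast both conditions in terms of the empirical measures $\mu_t := \frac{1}{\#\Lambda_t}\sum_{\mathbf{u}\in\Lambda_t}\delta_{\hat{\mathbf{u}}}$ on the torus $[0,1)^s$, where $\hat{\mathbf{u}}$ denotes the coordinatewise fractional part of $\mathbf{u}$ (I treat $\hat\Lambda_t$ as a multiset, so that $\#\hat\Lambda_t=\#\Lambda_t$ and the two normalizations in the statement agree). Note that $e(\mathbf{u}\cdot\mathbf{v})=e(\hat{\mathbf{u}}\cdot\mathbf{v})$ whenever $\mathbf{v}\in\Z^s$, since the two differ by $e$ of an integer, so the Weyl sum is intrinsic to $\mu_t$. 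I would then route the equivalence through the single pivot condition
\[
(\ast)\qquad \frac{1}{\#\Lambda_t}\sum_{\mathbf{u}\in\Lambda_t} f(\hat{\mathbf{u}}) \;\xrightarrow{\,t\to\infty\,}\; \int_{[0,1)^s} f(\mathbf{x})\,d\mathbf{x}\qquad\text{for every continuous }1\text{-periodic }f,
\]
proving that vanishing discrepancy is equivalent to $(\ast)$ and, separately, that the Weyl-sum condition is equivalent to $(\ast)$; chaining these two equivalences through the hub $(\ast)$ yields the lemma.

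For the equivalence of the Weyl-sum condition with $(\ast)$—the softer half—observe that for each $\mathbf{v}\in\Z^s$ the function $\mathbf{x}\mapsto e(\mathbf{x}\cdot\mathbf{v})$ is continuous and $1$-periodic, with $\int_{[0,1)^s} e(\mathbf{x}\cdot\mathbf{v})\,d\mathbf{x}$ equal to $0$ for $\mathbf{v}\neq \mathbf 0$ and to $1$ for $\mathbf{v}=\mathbf 0$. Hence $(\ast)$ specialized to these functions is exactly the statement that every Weyl sum vanishes. Conversely, if all Weyl sums vanish, then by linearity $(\ast)$ holds for every trigonometric polynomial $P(\mathbf{x})=\sum_{\mathbf{v}} a_{\mathbf{v}}\, e(\mathbf{x}\cdot\mathbf{v})$, the constant term $a_{\mathbf 0}$ matching $\int P$. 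Since the trigonometric polynomials are uniformly dense in the continuous $1$-periodic functions (Stone--Weierstrass), a $3\varepsilon$ argument—bounding $\big|\frac{1}{\#\Lambda_t}\sum_{\mathbf u} f(\hat{\mathbf u}) - \int f\big|$ by the two uniform-approximation errors plus the trigonometric-polynomial term—upgrades $(\ast)$ from trigonometric polynomials to all continuous $f$.

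For the equivalence of vanishing discrepancy with $(\ast)$, one direction is immediate: vanishing discrepancy says precisely that $\mu_t(B)\to\vol(B)$ for every box $B$, i.e.\ $(\ast)$ for indicators of boxes, and since any continuous $f$ on the compact torus is uniformly approximable by step functions (finite real combinations of box indicators), a further $3\varepsilon$ argument gives $(\ast)$. The reverse implication is the main obstacle, because the discrepancy is a supremum over \emph{all} boxes at once whereas $(\ast)$ only controls one fixed test function at a time. I would first deduce, for each fixed box $B$, that $\mu_t(B)\to\vol(B)$ by sandwiching $\mathbbm{1}_B$ between continuous functions $f^-\le \mathbbm{1}_B\le f^+$ with $\int(f^+-f^-)<\varepsilon$ and applying $(\ast)$ to $f^\pm$. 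To promote this to uniformity I would run a grid/covering argument: partition $[0,1)^s$ into the $N^s$ subcubes of side $1/N$, and sandwich an arbitrary box $B$ between the union $B^-$ of subcubes contained in it and the union $B^+$ of subcubes meeting it, so that $\vol(B^+\setminus B^-)=O(s/N)$ since only a boundary layer is lost. Applying the per-cube convergence to the finitely many subcubes, for $t$ large every $\mu_t(\text{cube})$ lies within $\varepsilon$ of its volume; summing over the at most $N^s$ cubes making up $B^{\pm}$ bounds $|\mu_t(B)-\vol(B)|$ by $O(s/N)+N^s\varepsilon$ uniformly in $B$. Choosing $N$ large first and then $t$ large drives the discrepancy to $0$, completing the circle of equivalences.
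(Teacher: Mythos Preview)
Your argument is correct and follows the classical route to Weyl's criterion: pass through the pivot condition that empirical averages converge to integrals for all continuous periodic $f$, link this to the exponential sums via Stone--Weierstrass, and link it to discrepancy via sandwiching and a grid/covering argument. The multiset convention for $\hat\Lambda_t$ is the right reading to make the two normalizations match, and the uniformity step---fixing $N$, waiting until the finitely many grid cubes are each within $\varepsilon$ of their volume, and then bounding by $O(s/N)+N^s\varepsilon$---is sound.

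There is nothing to compare against, however: the paper does not prove this lemma. It is stated as Weyl's criterion with a citation to \cite{Menagerie} (Lemma~1 there) and used as a black box in the proof of Theorem~9. So your write-up supplies a proof where the paper simply imports a standard result from the equidistribution literature.
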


We are now ready to prove Theorem \ref{thm:DGLgeneralize}.

\begin{proof}
	First, we show that $\img(\theta_{n, m, A}) \subset \img(g_d)$. Let $n$, $d$, and $A$ have the properties as stated in the theorem. Since we assume that $\Phi_d(A) = 0 \mod n$, then for $k \in \{0, 1, \ldots, d - 1\}$, we obtain the relations $$A^k \equiv \sum_{j = 0}^{\varphi(d) - 1} c_{jk} A^j \mod n,$$ where the $c_{jk}$ are the constants mentioned in the statement of the theorem. Then for any $\bold{x} \in (\Z/n\Z)^m$, we now have the following: $$\theta_{n, m, A}(\bold{x}) = \sum_{k = 0}^{d - 1} e\left( \dfrac{A^k \bullet \bold{x}}{n} \right) = \sum_{k = 0}^{d - 1} e\left( \dfrac{\sum_{j = 0}^{\varphi(d) - 1} c_{jk} A^j \bullet \bold{x}}{n} \right) = \sum_{k = 0}^{d - 1} \prod_{j = 0}^{\varphi(d) - 1} e\left( \dfrac{A^j \bullet \bold{x}}{n} \right)^{c_{jk}}.$$ Since $e\left( \frac{A^j \bullet \bold{x}}{n} \right)$ is contained in $\T$ for all $j$, then the fact that $\img(\theta_{n, m, A}) \subset \img(g_d)$ follows from the definition of $g_d$.
	
	We now need to show that $\img(\theta_{n, m, A})$ fills out $\img(g_d)$ as $n \to \infty$. To do this, we need to prove that the roots of unity showing up in the supercharacter sums get asymptotically close to any element $(z_1, \ldots, z_{\varphi(d)})$ in the domain of $g_d$. We show this by using Weyl's criterion to prove that the exponents of those roots of unity are uniformly distributed mod 1.
	
	We start by indexing our sets $Y_n$ and maps $\Lambda_n$ in the following way. Let $\cat{N} \subset \N$ be the set of all positive integers $n$ such that $\GL_m(\Z/n\Z)$ contains a matrix $A$ of order $d$ such that $\Phi_d(A) = 0 \mod n$. Create a sequence $(n_i)_{i = 1}^\infty$ using all the elements of $\cat{N}$, indexed so that $n_1 < n_2 < n_3 < \cdots$. Additionally, let $A_i$ denote our choice of matrix of order $d$ modulo $n_i$. We let $Y_{n_i} := (\Z/n_i \Z)^m$. Then for any $\bold{y} \in Y_{n_i}$, we define the following map: $$\Lambda_{n_i}(\bold{y}) = \left( \frac{(A_i^0 \bullet \bold{y}) \mod n_i}{n_i}, \frac{(A_i^1 \bullet \bold{y}) \mod n_i}{n_i}, \ldots, \frac{(A_i^{\varphi(d) - 1} \bullet \bold{y}) \mod n_i}{n_i} \right) \in [0, 1)^{\varphi(d)}.$$
	
	We need to show that $(Y_{n_i}, \Lambda_{n_i})_{i = 1}^\infty$ is uniformly distributed mod 1. Thus, using Lemma \ref{lem:Weyl}, we need to show that for any nonzero vector $\bold{v} \in \Z^{\varphi(d)}$, the following is true: $$\lim_{i \to \infty} \frac{1}{\# Y_{n_i}} \sum_{\bold{y} \in Y_{n_i}} e(\Lambda_{n_i}(\bold{y}) \cdot \bold{v}) = 0.$$ Note that $\# Y_{n_i} = n_i^m$ for every $i$, so we need to show that $$\lim_{i \to \infty} \frac{1}{n_i^m} \sum_{\bold{y} \in Y_{n_i}} e(\Lambda_{n_i}(\bold{y}) \cdot \bold{v}) = 0.$$ 
	
	To this end, first let us consider the vectors $A_i^j \bold{1}$ more closely. For $j \in \{0, 1, \ldots, \varphi(d) - 1\}$, we write $A_i^j =: (a_{bc, i}^j)_{1 \leq b,c \leq m}$. Then we have the following: $$A_i^j \bold{1} = A^j_i \cdot \begin{pmatrix} 1 & 1 & \cdots & 1 \end{pmatrix}^T = \begin{pmatrix} \sum_{c = 1}^m a_{1c, i}^j & \sum_{c = 1}^m a_{2c, i}^j & \cdots & \sum_{c = 1}^m a_{cm, i}^j \end{pmatrix}^T.$$ Let us write $A_i^j \bold{1} = (w^j_{1,i}, w^j_{2,i}, \ldots, w^j_{m,i})^T$; that is, $w^j_{k,i}$ is the sum of the elements in the $k$-th row of the matrix $A_i^j$. To simplify notation, we set $\bold{w}_i^j := A_i^j \bold{1} = (w^j_{1,i}, w^j_{2,i}, \ldots, w^j_{m,i})^T$.
	
	Returning to the computation at hand, let $\bold{v} = (v_0, \ldots, v_{\varphi(d) - 1})$ be any nonzero vector in $\Z^{\varphi(d)}$. Then we have the following:
	\begin{flalign*}
		\sum_{\bold{y} \in Y_{n_i}} e(\Lambda_{n_i}(\bold{y}) \cdot \bold{v}) &= \sum_{\bold{x} \in (\Z/n_i\Z)^m}  e\left( \sum_{j = 0}^{\varphi(d) - 1} \dfrac{\bold{w}^j_i \cdot \bold{x} \cdot v_j}{n_i} \right) \\
		&= \sum_{x_1, \ldots, x_m = 0}^{n_i - 1} e\left( \sum_{j = 0}^{\varphi(d) - 1} \dfrac{ (w_{1,i}^j x_1 + w_{2,i}^j x_2 + \cdots + w_{m,i}^j x_m) v_j}{n_i} \right) \\
		&= \left[ \sum_{x_1 = 0}^{n_i - 1} e\left( \sum_{j = 0}^{\varphi(d) - 1} \dfrac{w_{1,i}^j x_1 v_j}{n_i} \right) \right] \cdots \left[ \sum_{x_m = 0}^{n_i - 1} e\left( \sum_{j = 0}^{\varphi(d) - 1} \dfrac{w_{m,i}^j x_m v_j}{n_i} \right) \right].
	\end{flalign*}
	For $\ell \in \{1, \ldots, m\}$, we define $\alpha_{\ell, i} := \sum_{j = 0}^{\varphi(d) - 1} w_{\ell,i}^j \cdot v_j$. Then we have the following: 
	\begin{flalign*}
		\sum_{\bold{y} \in Y_{n_i}} e(\Lambda_{n_i}(\bold{y}) \cdot \bold{v}) &= \left[ \sum_{x_1 = 0}^{n_i - 1} e\left( \sum_{j = 0}^{\varphi(d) - 1} \dfrac{w_{1,i}^j x_1 v_j}{n_i} \right) \right] \cdots \left[ \sum_{x_m = 0}^{n_i - 1} e\left( \sum_{j = 0}^{\varphi(d) - 1} \dfrac{w_{m,i}^j x_m v_j}{n_i} \right) \right] \\
		&= \left[ \sum_{x_1 = 0}^{n_i - 1} e\left(\frac{\alpha_{1,i} x_1}{n_i} \right) \right] \cdots \left[ \sum_{x_m = 0}^{n_i - 1} e\left( \dfrac{\alpha_{m,i} x_m}{n_i} \right) \right] \\
		&= \begin{cases} n_i^m & \text{if } n_i \mid \alpha_{\ell,i} \text{ for all } \ell \in \{1, 2, \ldots, m\}, \\ 0 & \text{otherwise.} \end{cases}
	\end{flalign*}
	The final equality comes from the orthogonality of additive characters on $\Z/n\Z$. That is, $$\sum_{x = 0}^{n - 1} e\left(\dfrac{\alpha x}{n}\right) = \begin{cases} n & \text{if } n \mid \alpha, \\ 0 & \text{otherwise}. \end{cases}$$
	
	From here, we will show that there are at most finitely many $i$ such that $n_i$ divides $\alpha_{\ell, i}$ for all $\ell \in \{1, 2, \ldots, m\}$. First, define a polynomial $f_{\bold{v}}(x) = v_0 + v_1 x + \cdots + v_{\varphi(d) - 1} x^{\varphi(d) - 1}$. Using the definition of $\alpha_{\ell, i}$, it is a simple calculation to check that the following equality holds: $$f_{\bold{v}}(A_i) \begin{pmatrix} 1 & 1 & \cdots & 1 \end{pmatrix}^T = \begin{pmatrix} \alpha_{1, i} & \alpha_{2, i} & \cdots & \alpha_{m, i} \end{pmatrix}^T.$$ 
	
	Consider now the cyclotomic polynomial $\Phi_d(x)$. Since $f_{\bold{v}}(x)$ is a polynomial of degree $\varphi(d) - 1$, and since $\Phi_d(x)$ is irreducible over $\Q$ of degree $\varphi(d)$, then $\gcd(f_{\bold{v}}(x), \Phi_d(x)) = 1$ in $\Q[x]$. Thus there exist polynomials $P(x), Q(x) \in \Q[x]$ such that $P(x) f_{\bold{v}}(x) + Q(x) \Phi_d(x) = 1$. By clearing out denominators, we then obtain polynomials $R(x), S(x) \in \Z[x]$ such that $R(x) f_{\bold{v}}(x) + S(x) \Phi_d(x) = t$ for some fixed nonzero integer $t$. 
	
	Note that the above equality depends only on the choice of $d$ and the vector $\bold{v} \in \Z^{\varphi(d)}$, which we fixed at the beginning of the proof. In particular, the equality does not depend on the choices of $n_i$ nor $A_i$. Thus for any of our chosen $A_i$, we find that $R(A_i) f_{\bold{v}}(A_i) + S(A_i) \Phi_d(A_i) = t \cdot I_m$, where $I_m$ is the $m \times m$ identity matrix. Since $\Phi_d(A_i) = 0 \mod n_i$, we then find that $R(A_i) f_{\bold{v}}(A_i) \equiv t \cdot I_m \mod n_i$. After multiplying both sides of this congruence by the matrix $\begin{pmatrix} 1 & 1 & \cdots & 1 \end{pmatrix}^T$, we get the following result: $$R(A_i) f_{\bold{v}}(A_i) \begin{pmatrix} 1 & \cdots & 1 \end{pmatrix}^T \equiv \begin{pmatrix} t & \cdots & t \end{pmatrix}^T \mod n_i.$$
	
	Now, $R(A_i)$ is an $m \times m$ matrix, so write $R(A_i) = (\rho_{bc, i})_{1 \leq b,c \leq m}$. Using the fact that $$f_{\bold{v}}(A_i) \begin{pmatrix} 1 & 1 & \cdots & 1 \end{pmatrix}^T = \begin{pmatrix} \alpha_{1, i} & \alpha_{2, i} & \cdots & \alpha_{m, i} \end{pmatrix}^T,$$ we can rewrite $R(A_i) f_{\bold{v}}(A_i) \begin{pmatrix} 1 & \cdots & 1 \end{pmatrix}^T$ as the following: $$\begin{pmatrix} \rho_{11, i} & \cdots & \rho_{1m, i} \\ \vdots & \ddots & \vdots \\ \rho_{m1, i} & \cdots & \rho_{mm, i} \end{pmatrix} \begin{pmatrix} \alpha_{1,i} & \cdots & \alpha_{m,i} \end{pmatrix}^T = \begin{pmatrix} \displaystyle{\sum_{\ell = 1}^m \rho_{1 \ell, i} \alpha_{\ell, i} } & \displaystyle{\sum_{\ell = 1}^m \rho_{2 \ell, i} \alpha_{\ell, i}} & \cdots & \displaystyle{\sum_{\ell = 1}^m \rho_{m \ell, i} \alpha_{\ell, i}}\end{pmatrix}^T.$$ Putting this all together, we end up with the following equivalence relation: $$\begin{pmatrix} \displaystyle{\sum_{\ell = 1}^m \rho_{1 \ell, i} \alpha_{\ell, i} } & \displaystyle{\sum_{\ell = 1}^m \rho_{2 \ell, i} \alpha_{\ell, i}} & \cdots & \displaystyle{\sum_{\ell = 1}^m \rho_{m \ell, i} \alpha_{\ell, i}}\end{pmatrix}^T \equiv \begin{pmatrix} t & \cdots & t \end{pmatrix}^T \mod n_i.$$ 
	
	If we now assume that $n_i$ divides $\alpha_{\ell, i}$ for every $\ell$, then $\sum_{\ell = 1}^m \rho_{b \ell, i} \alpha_{\ell, i}$ is divisible by $n_i$ for every $b \in \{1, \ldots, m\}$. Since $\sum_{\ell = 1}^m \rho_{b \ell, i} \alpha_{\ell, i} \equiv t \mod n_i$ by the equivalence relation above, then $n_i$ must also divide $t$. However, as stated previously, $t$ is a nonzero integer which is fixed for all choices of $n_i$ and $A_i$. Since there are at most finitely many $i$ such that $n_i$ divides $t$, then there can be at most finitely many $i$ such that $n_i$ divides $\alpha_{\ell, i}$ for all $\ell$.
	
	We can then choose $N$ to be the largest integer such that $n_N$ divides $\alpha_{\ell, N}$ for all $\ell$, letting $N = 0$ if there is no such integer. Then the previous paragraph implies that $$\sum_{\bold{y} \in Y_{n_i}} e(\Lambda_{n_i}(\bold{y}) \cdot \bold{v}) = 0$$ for every $i > N$. Hence, we have shown that $$\lim_{i \to \infty} \frac{1}{\# Y_{n_i} } \sum_{\bold{y} \in Y_{n_i}} e(\Lambda_{n_i}(\bold{y}) \cdot \bold{v}) = 0.$$ Thus the sequence $(Y_{n_i}, \Lambda_{n_i})_{i = 1}^\infty$ is uniformly distributed mod 1 by Weyl's criterion, finishing our proof of Theorem \ref{thm:DGLgeneralize}.
\end{proof}

\begin{remark}
	\label{rmk:equidistributionimplication}
	As mentioned in Remark \ref{rmk:unifdistimpliesdensityandequidist}, we showed not only that $\Lambda_{n_i}(Y_{n_i})$ becomes dense in $[0,1)^{\varphi(d)}$, but that it becomes equidistributed as well. More precisely, Theorem \ref{thm:DGLgeneralize} shows that the sequence $(Y_{n_i}, \theta_{n_i, m, A_i})_{i = 1}^\infty$ becomes equidistributed in $\img(g_d)$ with respect to the pushforward measure $(g_d)_{*}\lambda$ of the Haar measure $\lambda$ on $\T^{\varphi(d)}$. These concepts and more are explored in much more depth in the aforementioned papers by Untrau and Kowalski--Untrau \cite{untrau2021equidistribution, untraukowalski2023ultrashort}.
\end{remark}

We now return to our discussion in Remark \ref{rmk:DGLgenpreamble}. Let $n$, $m$, and $d$ be chosen as in Theorem \ref{thm:DGLgeneralize}. Let $A \in \GL_m(\Z/n\Z)$ have order $d$, but this time assume that $\Phi_d(A) \neq 0 \mod n$. In Remark \ref{rmk:DGLGeneralizeRestriction} below, we discuss the implications of these assumptions, and we show by counterexample that (an appropriate generalization of) Theorem \ref{thm:DGLgeneralize} is not true in this situation.

\begin{remark}
	First, note that $A^d - I = 0 \mod n$ since $A$ has order $d$ in $\GL_m(\Z/n\Z)$. Also, recall that $x^d - 1 \in \Z[x]$ decomposes as $$x^d - 1 = \prod_{k \mid d} \Phi_k(x).$$ Thus the divisors of $x^d - 1$ are of the form $\Phi_{k_1} \Phi_{k_2} \cdots \Phi_{k_\ell}$ for $k_i$ dividing $d$. Let $R \subset \Z[x]$ be the set of polynomials $f(x)$ dividing $x^d - 1$ such that $f(A) = 0 \mod n$. At the very least, we know that $x^d - 1 \in R$, though it may not be the polynomial in $R$ with the smallest degree. Let $\mu(x) \in R$ be the monic polynomial with minimal degree among the polynomials in $R$. If $\mu(x)$ is irreducible over $\Z$, then $\mu = \Phi_d$ (since $A$ has order $d$). However, since we are exploring the case where $\Phi_d(A) \neq 0 \mod n$, then we assume that $\mu$ is reducible over $\Z$.
	
	It turns out that Theorem \ref{thm:DGLgeneralize} does not generalize to situations where $\mu$ is reducible. At least, the asymptotic filling out behavior no longer holds---however, we can still describe the general shape of the supercharacter values. For example, given $\mu(x) \mid (x^d - 1)$, we can define an analogous Laurent polynomial $g_\mu: \T^{\deg(\mu)} \to \C$ given by the following: $$g_\mu(z_1, \ldots, z_{\deg(\mu)}) = \sum_{k = 0}^{d - 1} \prod_{j = 0}^{\deg(\mu) - 1} z_{j+1}^{b_{jk}},$$ where the $b_{jk}$ are given by the relations $$x^k \equiv \sum_{j = 0}^{\deg(\mu) - 1} b_{jk} x^j \mod \mu(x).$$ Then, using the same reasoning as in the proof of Theorem \ref{thm:DGLgeneralize}, we see that $\img(\theta_{n, m, A}) \subset \img(g_\mu)$. However, it is no longer necessarily true that the $\img(g_\mu)$ is filled out as $n \to \infty$. In fact, we provide a counterexample to this in the following paragraphs.
	
	Recall that Weyl's criterion gives an equivalent condition for $\img(g_\mu)$ to be filled out asymptotically. This criterion says that $\img(g_\mu)$ is filled out asymptotically if for any nonzero $\bold{v} \in \Z^{\deg(\mu)}$, we have $$\lim_{i \to \infty} \frac{1}{\# Y_{n_i}} \sum_{\bold{y} \in Y_{n_i}} e(\Lambda_{n_i}(\bold{y}) \cdot \bold{v}) = 0.$$ In the setting of Theorem \ref{thm:DGLgeneralize}, we additionally showed in our proof that Weyl's criterion holds if and only if $f_{\bold{v}}(A_i) \bold{1} = 0 \mod n_i$ for at most finitely many $i$. Thus finding a counterexample is equivalent to finding $n_i$, $m$, $d$, and $A_i$ such that for some nonzero $\bold{v} \in \Z^{\deg(\mu)}$, we have $f_{\bold{v}}(A_i) \bold{1} = 0 \mod n_i$ for infinitely many $i$. 
	
	To this end, consider the case where $\mu(x) = \Phi_3(x) \Phi_5(x)$; where $m = 6$ and $d = 15$; where $n_i$ are the integers greater than 6 which are not divisible by 2, 3, or 5; and where $A_i$ is fixed for all $i$ to be the companion matrix to $\Phi_3 \Phi_5$. Since $m = 6$ and $n_i \geq 7$, then it is guaranteed that $15 \mid (\# \GL_m(\Z/n_i\Z))$; to see this, consult the first bullet point in Section \ref{sec:strategies} and note that $3 \mid (p^2 - 1)$ and $5 \mid (p^4 - 1)$ for all primes $p \geq 7$. Additionally, if $A \in \GL_6(\Z)$ is the companion matrix to $\Phi_3 \Phi_5$, then $A$ has order 15. One can then verify computationally that for all $k \in \{0, 1, \ldots, 14\}$, the matrix $A^k$ has integer entries in the set $\{0, \pm 1, \pm 2, \pm 3, \pm 4\}.$ Thus, as long as $n_i$ is not divisible by 2, 3, or 5, the matrix $A$ has order 15 in the group $\GL_6(\Z/n_i\Z)$ (in particular, no smaller power of $A$ reduces to the identity). 
	
	We can then choose the vector $\bold{v} = (1, 1, 1, 1, 1, 0) \in \Z^{\deg(\mu)}$ so that $$f_{\bold{v}}(x) = 1 + x + x^2 + x^3 + x^4 = \Phi_5(x).$$ One can compute directly that $$f_{\bold{v}}(A) \bold{1} = \begin{pmatrix} 0 & 0 & 0 & 0 & 0 & 0 \end{pmatrix}^T,$$ where the multiplication is happening over $\Z$. Since $f_{\bold{v}}(A) \bold{1} = 0$ over $\Z$, then $f_{\bold{v}}(A) \bold{1} = 0 \mod n_i$ for all $n_i$. Thus $\img(g_\mu)$ does \emph{not} get filled out asymptotically. Thus, without any added assumptions, Theorem \ref{thm:DGLgeneralize} seems to be the most general version possible of the original DGL Theorem.
	
	\label{rmk:DGLGeneralizeRestriction}
\end{remark}

To conclude this section, we include some examples of the phenomenon described in Theorem \ref{thm:DGLgeneralize} in Figure \ref{fig:DGLgeneralize}. 

\begin{figure}[h!]
	\centering
	\begin{subfigure}[b]{0.33\linewidth}
		\includegraphics[width=\linewidth]{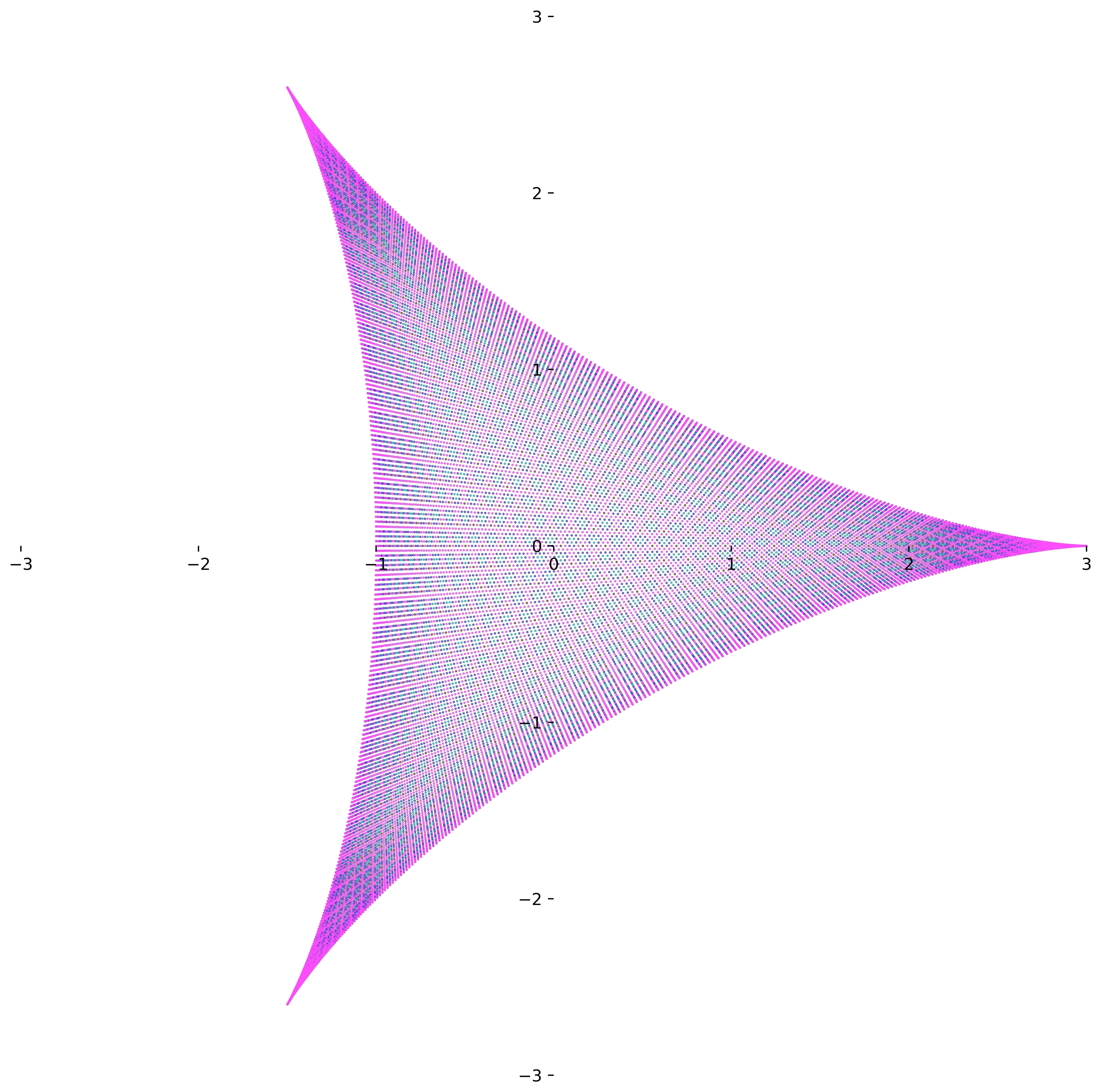}
		\caption{$A = \left( \begin{smallmatrix} 0 & 1 \\ 454 & 454 \end{smallmatrix} \right)$, $n = 455$, $d = 3$}
	\end{subfigure}
	\begin{subfigure}[b]{0.33\linewidth}
		\includegraphics[width=\linewidth]{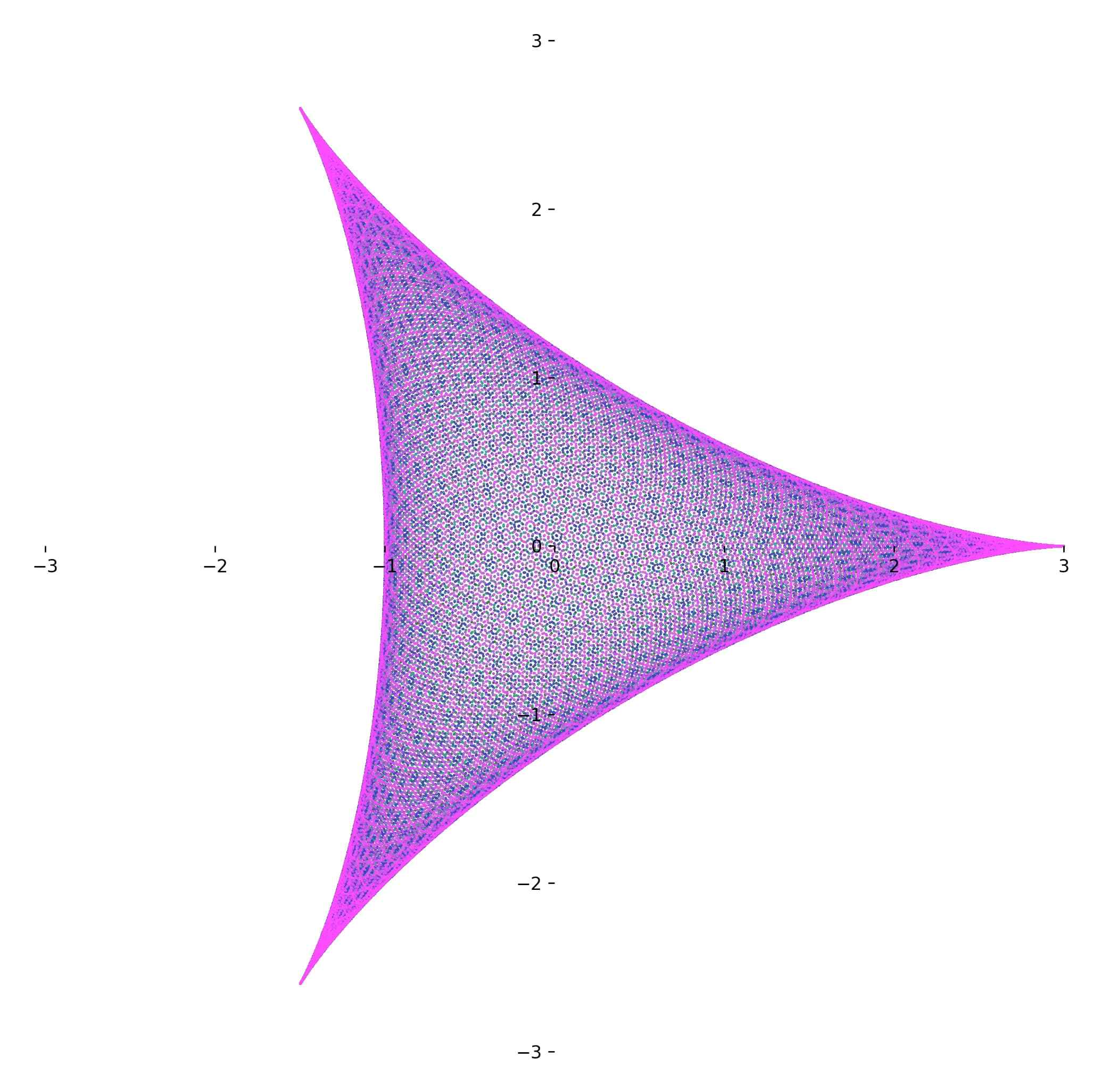}
		\caption{$A = 21823$, $n = 7^2 13^2 19$, $d = 3$}
	\end{subfigure}
	\begin{subfigure}[b]{0.32\linewidth}
		\includegraphics[width=\linewidth]{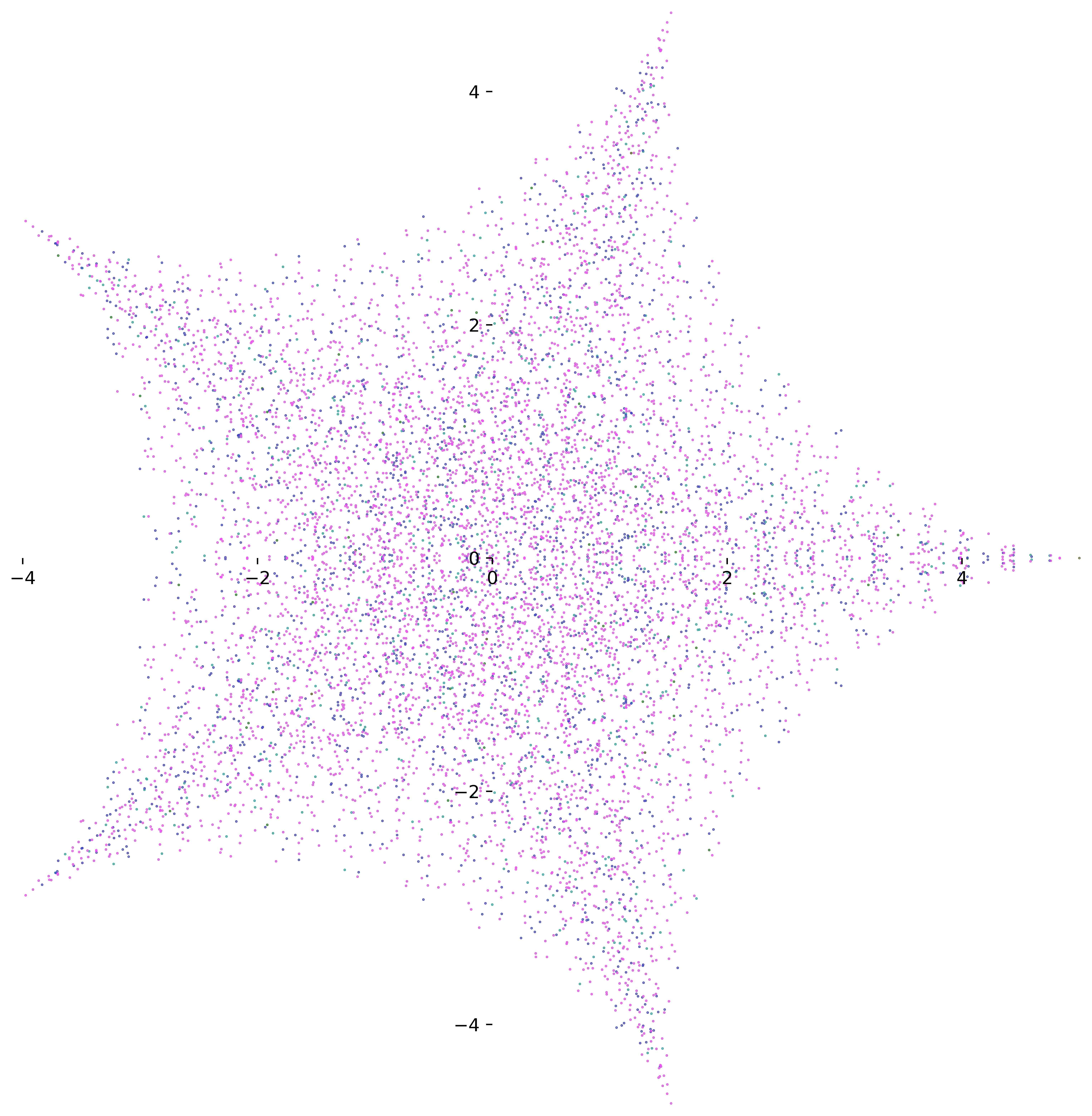}
		\caption{$A = \left( \begin{smallmatrix} 0 & 1 \\ 94 & 166 \end{smallmatrix} \right)$, $n = 209$, $d = 5$}
	\end{subfigure}
	
	\caption{Examples of Theorem \ref{thm:DGLgeneralize}}
	\label{fig:DGLgeneralize}
\end{figure}

\section{Observations and Generalizations: Class Field Theory Perspective}

\label{sec:classfield}
	
We now wish to step away from supercharacter theory and return to Gaussian periods, this time viewing them from the perspective of class field theory. As a reminder, we stated in section 2.2 that the ray class field for $\Q$ of modulus $(n) \subset \Z$ is the cyclotomic field $\Q(\mu_n)$, where $\mu_n \subset \C^\times$ is the subset of $n$-th roots of unity. The ray class group of modulus $(n)$ is isomorphic to $(\Z/n\Z)^\times$, and we can choose an element $\omega \in (\Z/n\Z)^\times$ to get a cyclic subgroup $\langle \omega \rangle \subset (\Z/n\Z)^\times$. From this perspective, Gaussian periods correspond to elements that generate the subfield $\Q(\mu_n)^{\langle \omega \rangle} \subset \Q(\mu_n)$ fixed by the action of this subgroup. 

Now, this is the whole story for the rational field $\Q$, but we would like to generalize this to other base fields. Ostensibly, this story is simple enough to recreate. Take a number field $K$, an ideal $\m \subset \O_K$, its ray class field $K[\m]$ of modulus $\m$, and its ray class group $\Cl_K(\m)$. Then choose an element $\omega \in \Cl_K(\m)$, and sum over the Galois action of $\langle \omega \rangle$ to generate elements of the subfield $K[\m]^{\langle \omega \rangle}$. However, in order to recreate this story \emph{explicitly}, one needs to have explicit elements of the ray class field $K[\m]$, which as stated previously is a problem to which we have very few answers. 

As of the writing of this paper, there are two main classes of fields other than $\Q$ for which we have explicit descriptions of their ray class fields. The first is quadratic imaginary fields (fields of the form $\Q(\sqrt{-D})$ for positive square-free $D$), where the theory of complex multiplication provides an answer. The second is the much more recent case of totally real fields, in which Dasgupta and Kakde showed in \cite{TotReal,dasgupta2023brumerstark} that the ray class fields can be generated using Brumer-Stark units. 

For the rest of this section, we will explore the case of quadratic imaginary fields, starting with a (very) brief overview of elliptic curves and the theory of complex multiplication. For further reading on elliptic curves and complex multiplication, we recommend \cite{SilvermanArithmetic} and \cite[Chapter II]{SilvermanAdvanced}.

\subsection{Elliptic Curves and Complex Multiplication}

\label{sec:classfieldellipticcurvesandCM}

Our goal is to describe the construction of ray class fields of quadratic imaginary fields, along with some useful related theorems. For this section, we assume basic knowledge about elliptic curves and start with the following definition.

\begin{definition}
	Let $E$ be an elliptic curve defined over $\C$. Then $E$ is said to have \emph{complex multiplication} (CM) if $\End(E)$ is isomorphic to a quadratic imaginary order $\O$. If $\O$ is contained in the quadratic imaginary field $K$, then we say that $E$ has CM by $K$.
\end{definition}

Elliptic curves with CM have many special properties. For this paper, however, we focus only on the relationship between CM and abelian extensions of quadratic imaginary fields.

\begin{proposition}[Proposition II.1.2 in \cite{SilvermanAdvanced}]
	Let $K = \Q(\sqrt{-D})$ be a quadratic imaginary field. Let $\cat{E}(K)$ denote the set of elliptic curves with CM by $K$, up to isomorphism. Then $\cat{E}(K)$ is finite, and there exists a bijection between $\cat{E}(K)$ and the ideal class group $\Cl_K(1)$.
\end{proposition}

Recall that the Hilbert class field $K[1]$ has degree $[K[1]: K] = \# \Cl_K(1)$ over $K$. We will be using the fact that there is a bijection between $\Cl_K(1)$ and $\cat{E}(K)$ to obtain generators of the Hilbert class field, but before we can do that, we must first describe a different characterization of elliptic curves over $\C$.

\begin{theorem}[Uniformization Theorem]
	Let $E$ be an elliptic curve over $\C$. Then there exists a $\Z$-lattice $\Lambda \subset \C$, unique up to homothety, such that $E$ is isomorphic to $\C/\Lambda$ via the complex analytic isomorphism $$\phi: \C/\Lambda \to E, \hspace{.5in} \phi(z) = (\wp(z; \Lambda), \wp'(z; \Lambda)),$$ where $\wp$ is the Weierstrass $\wp$-function defined to be the following: $$\wp(z; \Lambda) = \frac{1}{z^2} + \sum_{\lambda \in \Lambda \sm \{0\}} \left( \frac{1}{(z - \lambda)^2} - \frac{1}{\lambda^2} \right).$$
\end{theorem}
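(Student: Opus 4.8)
The plan is to prove the theorem in three logically separate pieces. First, for a \emph{fixed} lattice $\Lambda \subseteq \C$, I would show that the Weierstrass functions realize the complex torus $\C/\Lambda$ as a concrete plane cubic via an analytic group isomorphism. Second, I would prove the converse surjectivity statement: every elliptic curve over $\C$ is isomorphic to $E_\Lambda$ for some $\Lambda$. Third, I would establish uniqueness of $\Lambda$ up to homothety. (An alternative route to the underlying topological fact --- that a genus-one compact Riemann surface is a quotient of $\C$ by a rank-two lattice --- is the Riemann uniformization theorem, but one would then still have to identify the covering map with $(\wp,\wp')$, so I prefer the direct construction.)

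For the first piece, I would start by checking that the series defining $\wp(z;\Lambda)$ converges absolutely and locally uniformly on $\C \sm \Lambda$ by comparison with $\sum_{\lambda \neq 0}|\lambda|^{-3}$, which converges since $\Lambda$ has rank two; hence $\wp$ is meromorphic with double poles exactly on $\Lambda$. Term-by-term differentiation gives $\wp'(z;\Lambda) = -2\sum_{\lambda \in \Lambda}(z-\lambda)^{-3}$, which is manifestly $\Lambda$-periodic, and integrating (using that $\wp$ is even to kill the would-be half-period constants) shows $\wp$ is $\Lambda$-periodic as well. Comparing the low-order terms of the Laurent expansions of $\wp'^2$ and of $4\wp^3$ about $0$ produces an elliptic function holomorphic on all of $\C$ vanishing at $0$, which is therefore identically zero by Liouville's theorem; this is the differential equation $\wp'^2 = 4\wp^3 - g_2\wp - g_3$ with $g_2 = 60\sum_{\lambda\neq 0}\lambda^{-4}$ and $g_3 = 140\sum_{\lambda\neq 0}\lambda^{-6}$. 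Using that $\wp'$ has three distinct zeros at the half-periods one sees $\Delta := g_2^3 - 27g_3^2 \neq 0$, so $E_\Lambda: y^2 = 4x^3 - g_2 x - g_3$ is nonsingular. Then $\phi$ is holomorphic (with $\phi(0)$ the point at infinity), and it is bijective because $\wp$ has degree two as a map $\C/\Lambda \to \mathbb{P}^1$ with fibers $\{z,-z\}$, the two preimages being separated by the value of $\wp'$ except precisely over the three half-periods and over $\infty$, where the fiber collapses to one point on both sides. Finally $\phi$ is a group homomorphism: three points of $E_\Lambda$ lie on a line iff the corresponding parameters sum to $0$ in $\C/\Lambda$, proved by applying the residue theorem (equivalently Abel's theorem) to the elliptic function cut out by that line.

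For the second piece, given $E: y^2 = 4x^3 - ax - b$ with $a^3 - 27b^2 \neq 0$, I must produce $\Lambda$ with $g_2(\Lambda) = a$ and $g_3(\Lambda) = b$. Up to scaling, a lattice is $\Z + \Z\tau$ for $\tau$ in the upper half-plane modulo $\mathrm{SL}_2(\Z)$, and since $g_2(c\Lambda) = c^{-4}g_2(\Lambda)$, $g_3(c\Lambda) = c^{-6}g_3(\Lambda)$, the combination $j(\Lambda) = 1728\,g_2^3/\Delta$ is a homothety invariant. The crux is that the induced $j: \mathrm{SL}_2(\Z)\backslash \mathbb{H} \to \C$ is a bijection: one reads off from the $q$-expansions of the Eisenstein series that $j$ is holomorphic on $\mathbb{H}$ with a simple pole at the cusp, and the valence formula for weight-zero modular functions then shows $j - c$ has exactly one zero in the fundamental domain for every $c \in \C$. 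Hence there is $\tau$ with $j(\tau) = 1728\,a^3/(a^3-27b^2)$, and rescaling $\Z + \Z\tau$ by a suitable $c$ forces $g_2 = a$ and $g_3 = b$ simultaneously (equality of $j$ pins down $g_2^3/g_3^2$; the degenerate orbits $j = 0$, where $a = 0$, and $j = 1728$, where $b = 0$, are checked by hand), so $E \cong E_\Lambda \cong \C/\Lambda$ by the first piece. This modularity step is the main obstacle: it requires the structure theory of modular forms (the valence formula), plus the bookkeeping that upgrades agreement of $j$-invariants to exact agreement of the pair $(g_2,g_3)$.

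For uniqueness, if $\C/\Lambda_1 \cong \C/\Lambda_2$ as Riemann surfaces taking $0 \mapsto 0$, lift the isomorphism through the universal covers to an entire $f: \C \to \C$ with $f(0) = 0$ and $f(\Lambda_1) \subseteq \Lambda_2$; then $f'$ is $\Lambda_1$-periodic, hence bounded, hence constant, so $f(z) = \alpha z$ with $\alpha \Lambda_1 \subseteq \Lambda_2$, and applying the same to $f^{-1}$ gives $\alpha\Lambda_1 = \Lambda_2$. Since an isomorphism of elliptic curves is in particular such an isomorphism of complex tori, $\Lambda$ is determined up to homothety, completing the proof.
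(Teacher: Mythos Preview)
Your outline is correct and follows the standard textbook development (essentially the treatment in Silverman's \emph{The Arithmetic of Elliptic Curves}, Chapter~VI, which the paper cites). However, you should be aware that the paper does \emph{not} supply its own proof of this statement: the Uniformization Theorem is quoted there purely as background for the class-field-theory discussion, with the reader referred to \cite{SilvermanArithmetic} and \cite{SilvermanAdvanced} for details. So there is no ``paper's own proof'' to compare against; your three-step decomposition (construct $E_\Lambda$ from $\Lambda$ via $(\wp,\wp')$ and the Liouville argument for the differential equation; hit every $j$-value using the valence formula and then rescale; lift an isomorphism of tori to a linear map on $\C$) is exactly the classical route and would be accepted without reservation.
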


The Weierstrass $\wp$-function will be important to us later for computational purposes, but for now we wish to focus on the fact that choosing an elliptic curve (up to isomorphism) is equivalent to choosing a lattice $\Lambda \subset \C$ (up to homothety). We are now ready to describe how to obtain the generators of the Hilbert class fields of quadratic imaginary fields.

\begin{theorem}[Theorems II.4.1 and II.4.3 in \cite{SilvermanAdvanced}]
	Let $K$ be a quadratic imaginary field. Let $E_1$, \ldots, $E_\ell$ be representatives of all the isomorphism classes of elliptic curves with CM by $K$. Let $\Lambda_1$, \ldots, $\Lambda_\ell$ be the lattices in $\C$ such that $E_i \cong \C/\Lambda_i$, and write $\Lambda_i = \Z + \Z\tau_i$, where $\tau_i$ is in the upper half-plane. There exists a weight 0 modular function $j$ such that the Hilbert class field $K[1]$ is achieved by adjoining the values $j(\tau_1)$, \ldots, $j(\tau_\ell)$. In fact, the $j(\tau_i)$ are all algebraic conjugates, so $K[1] = K(j(\tau_i))$ for any choice of $i \in \{1, \ldots, \ell\}$. 
\end{theorem}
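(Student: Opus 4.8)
The plan is to take $j$ to be the classical modular $j$-invariant, viewed as the weight $0$ modular function for $\slz$ with $q$-expansion beginning $q^{-1} + 744 + \cdots$, so that for a lattice $\Lambda = \Z + \Z\tau$ with $\tau$ in the upper half-plane the value $j(\tau)$ depends only on the homothety class of $\Lambda$, equivalently only on the isomorphism class of $E \cong \C/\Lambda$; write $j(E) := j(\tau)$. The proof then breaks into three claims: (1) the values $j(\tau_1), \ldots, j(\tau_\ell)$ are pairwise distinct, so there are exactly $\ell = \#\Cl_K(1)$ of them; (2) each $j(\tau_i)$ is algebraic, the extension $K(j(\tau_i))/K$ is abelian, and $\Cl_K(1)$ acts transitively on $\{j(\tau_1), \ldots, j(\tau_\ell)\}$ through the Artin map, so that in particular the $j(\tau_i)$ are $\ell$ distinct conjugates over $K$ and $[K(j(\tau_i)) : K] = \ell$; (3) $K(j(\tau_i))/K$ is unramified at every prime of $K$. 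Granting these, (2) and (3) place $K(j(\tau_i))$ inside the maximal unramified abelian extension of $K$, namely the Hilbert class field $K[1]$; and since $[K[1]:K] = \#\Cl_K(1) = \ell$ by Theorem 17 while $[K(j(\tau_i)):K] = \ell$, we get $K(j(\tau_i)) = K[1]$, which is the assertion (the transitivity in (2) being exactly the statement that the $j(\tau_i)$ are all algebraic conjugates).

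For (1), I would use the bijection of Theorem 17: the homothety classes of lattices with CM by $\O_K$ --- equivalently the isomorphism classes $E_1, \ldots, E_\ell$ --- are parametrized by $\Cl_K(1)$ via $[\a] \mapsto [\C/\a]$, and non-isomorphic elliptic curves have distinct $j$-invariants, so the $\ell$ values are distinct. For (2), the elementary half is the observation that for any automorphism $\sigma$ of $\C$, conjugation by $\sigma$ gives a ring isomorphism $\End(E) \xrightarrow{\sim} \End(E^\sigma)$, so $E^\sigma$ again has CM by $\O_K$; hence $\Gal(\bar\Q/\Q)$ permutes the finite set $\{j(\tau_1), \ldots, j(\tau_\ell)\}$, and in particular each $j(\tau_i)$ is algebraic of degree at most $\ell$. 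The substantive half is that this permutation action is abelian and is identified with the class-group action: an ideal class $[\mathfrak{c}]$ sends $[\C/\a]$ to $[\C/\mathfrak{c}^{-1}\a]$, and the Main Theorem of Complex Multiplication (see \cite{SilvermanAdvanced}, Ch.\ II) says that the Artin symbol of $\mathfrak{c}$ in $\Gal(K(j(\tau_i))/K)$ acts on $j$-invariants by exactly this rule. Since $\Cl_K(1)$ acts simply transitively on the homothety classes, it acts transitively on the $j(\tau_i)$, which gives the remaining parts of (2).

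For (3), I would invoke that an elliptic curve with complex multiplication has potentially good reduction at every prime; analyzing the reduction of the $E_i$ at a prime $\p$ of $K(j(\tau_i))$ and matching the Frobenius at $\p$ with the action of the ideal class of $\p$ on $j$-invariants (Deuring's reduction theory, also subsumed in the Main Theorem of CM) shows that no prime of $K$ ramifies. With (1)--(3) in hand, the chain of inclusions $K \subset K(j(\tau_i)) \subset K[1]$ together with the degree equality $[K(j(\tau_i)):K] = \ell = [K[1]:K]$ from Theorem 17 completes the proof.

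The hard part is the bridge spanning the non-elementary halves of (2) and (3): showing that the a priori arbitrary permutation action of $\Gal(\bar\Q/\Q)$ on the CM $j$-invariants is governed precisely by ideal multiplication in $\Cl_K(1)$, and that the resulting extension is everywhere unramified over $K$. This is the content of the Main Theorem of Complex Multiplication, whose proof proceeds either through Shimura's idelic reciprocity law or through Deuring's analysis of the reduction of CM elliptic curves and their Frobenius endomorphisms; in a paper of this scope I would cite it from \cite{SilvermanAdvanced} rather than reprove it. Everything else --- the modular invariance of $j$, the counting in (1), and the algebraicity portion of (2) --- is elementary once Theorem 17 is available, after which the degree count closes the argument.
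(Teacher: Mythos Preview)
The paper does not supply its own proof of this theorem: it is stated as a background result from the theory of complex multiplication, with the reader directed to \cite{SilvermanAdvanced} (Chapter II). So there is no in-paper argument to compare against; your outline is essentially the standard proof one finds in that reference, and it is correct in substance. The decomposition into distinctness of the $j(\tau_i)$, algebraicity plus transitivity of the class-group action via the Main Theorem of CM, and unramifiedness, followed by the degree count against $[K[1]:K]=\#\Cl_K(1)$, is exactly how Silverman organizes it.

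One small slip: you twice cite ``Theorem 17'' for auxiliary facts, but in the paper's numbering Theorem 17 \emph{is} the statement you are proving. The bijection between isomorphism classes of CM elliptic curves and $\Cl_K(1)$ is Theorem 15, and the equality $[K[1]:K]=\#\Cl_K(1)$ is recorded in Definition 7. Fix those cross-references and the write-up is fine.
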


\begin{remark}
	It is common to abuse notation for the function $j$ by allowing elliptic curves as inputs rather than elements of the upper half-plane. That is, if $E$ is an elliptic curve which is isomorphic to $\C/\Lambda$ for some lattice $\Lambda \subset \C$, then we understand $j(E)$ to mean $j(\tau)$, where $\Lambda = \Z + \Z\tau$ with $\tau$ in the upper half-plane. In this way, we can write the result of the theorem above as saying that $K[1] = K(j(E))$, where $E$ is any elliptic curve with CM by $K$.
\end{remark}

We are now able to construct the Hilbert class fields of quadratic imaginary fields. However, to construct abelian extensions that allow ramification at certain primes, we require the following definitions.

\begin{definition}
	Let $E$ be an elliptic curve with CM by a quadratic imaginary field $K$. Let $\m \subset \O_K$ be an ideal. We define the \emph{$\m$-torsion subgroup of $E$} to be $$E[\m] = \{t \in E \st [\alpha] t = 0 \text{ for every } \alpha \in \m\},$$ where $0$ represents the identity element of $E$ and $[\alpha]$ represents the normalized action of $\alpha$ as an element of $\End(E)$. In the special case that $\m = (m)$ for $m \in \Z$, then $E[m] \cong \Z/m\Z \times \Z/m\Z$ due to the fact that $E \cong \C/\Lambda$.
	
	\label{def:torsionsbgp}
\end{definition}

\begin{definition}
	Let $E$ be an elliptic curve with CM by a quadratic imaginary field $K$. Suppose $E$ is defined by the equation $y^2 = x^3 + Ax + B$. Then a \emph{Weber function} is a finite map $h: E \to E/\Aut(E)$. For our purposes, we follow the convention of \cite[II.5.5.1]{SilvermanAdvanced} and use the following Weber function: $$h(x,y) = \begin{cases} x & AB \neq 0, \\ x^2 & B = 0, \\ x^3 & A = 0. \end{cases}$$
\end{definition}

\begin{remark}
	The two special cases of $A = 0$ and $B = 0$ correspond to the cases where $E$ has CM by $\Q(\sqrt{-3})$ and $\Q(\sqrt{-1})$, respectively. These are the only quadratic imaginary fields which contain roots of unity other than $-1$ and $1$ (i.e. where $\Aut(E)$ is strictly larger than $\{\pm 1\}$), and it is for this reason that their Weber functions are different. However, in most cases, the Weber function is simply $x$-coordinate projection.
\end{remark}

We now use all of this to construct the ray class fields of quadratic imaginary fields.

\begin{theorem}[Theorem II.5.6 of \cite{SilvermanAdvanced}]
	Let $K$ be a quadratic imaginary field with ring of integers $\O_K$, let $\m \subset \O_K$ be an ideal, let $E$ be an elliptic curve with CM by $K$, and let $h: E \to E/\Aut(E)$ be a Weber function. Then the field $$K(j(E), h(E[\m]))$$ is the ray class field of $K$ of modulus $\m$. 
\end{theorem}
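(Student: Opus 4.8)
The plan is to establish the theorem in two stages, following the structure already laid out in the excerpt's discussion of ray class fields: first prove the Hilbert class field case (modulus $\m = (1)$), which is Theorem~20 above, and then bootstrap to general $\m$ by adjoining the Weber values of the torsion points. The key input is the main theorem of complex multiplication, which says that for an elliptic curve $E$ with CM by $K$, the Galois action of $\Gal(K^{ab}/K)$ on $E$ and its torsion is described via the idelic (or ideal-theoretic) reciprocity map. Concretely, for an ideal $\a$ of $\O_K$, the Artin symbol $\sigma_\a$ acts on the $j$-invariant by $j(E)^{\sigma_\a} = j(\a^{-1} \ast E)$ (where $\ast$ is the action of the class group on lattices/elliptic curves coming from $E = \C/\Lambda \mapsto \C/\a^{-1}\Lambda$), and on torsion points compatibly through the same idele. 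I would take this as the black box the excerpt's Remark~12 permits.

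**First I would** handle $\m = (1)$: by Theorem~20 we already know $K[1] = K(j(E))$, so the content is really in the setup — but for the general case I need to recall \emph{why} this works, namely that the Galois orbit of $j(E)$ under $\Gal(K[1]/K) \cong \Cl_K(1)$ is exactly $\{j(E_1), \ldots, j(E_\ell)\}$ and that the CM main theorem shows $\sigma_\a$ fixes $j(E)$ iff $\a$ is principal, giving $K(j(E))/K$ abelian unramified of the right degree with the right Galois group. **Then** for general $\m$, the strategy is: (i) show $L := K(j(E), h(E[\m]))$ is abelian over $K$ — this follows because the CM main theorem realizes each $\sigma \in \Gal(\overline{K}/K)$ acting on $j(E)$ and on $h(E[\m])$ through the abelian quotient $\Gal(K^{ab}/K)$, so $L \subset K^{ab}$; (ii) show $L$ is unramified outside the primes dividing $\m$ — using that $E$ can be chosen with good reduction outside a controlled set and that the $\m$-torsion, after applying the Weber function (which kills the $\Aut(E)$-ambiguity), generates an extension ramified only where $\m$ is divisible, via the criterion of Néron–Ogg–Shafarevich; (iii) show $L$ is \emph{maximal} with these properties, i.e. $L = K[\m]$, by a degree/Galois-group count: compute $\Gal(L/K)$ using the CM reciprocity law and identify it with $\Cl_K(\m)$. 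Step (iii) is where one exhibits an explicit surjection $\Cl_K(\m) \onto \Gal(L/K)$ from the Artin map and then checks injectivity by showing an ideal $\a$ coprime to $\m$ acts trivially on both $j(E)$ and $h(E[\m])$ precisely when $\a \equiv (\alpha)$ with $\alpha \equiv 1 \bmod \m$.

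**The hard part will be** step (ii)–(iii), specifically controlling ramification and pinning down the Galois group exactly rather than up to finite index. The ramification analysis requires knowing that $h$ is the \emph{right} Weber function — this is exactly why Definition~19 singles out the $A=0$ and $B=0$ cases: for $\Q(\sqrt{-1})$ and $\Q(\sqrt{-3})$ the extra automorphisms mean raw $x$-coordinates would give an extension slightly too large, and the $x^2$ or $x^3$ correction is forced so that $h(E[\m])$ generates precisely $K[\m]$ and not a larger field. Checking that the index-computation comes out to exactly $\#\Cl_K(\m)$ — neither too big nor too small — is the technical heart; I expect to invoke the exact sequence relating $\Cl_K(\m)$, $\Cl_K(1)$, and $(\O_K/\m)^\times / \mathrm{im}(\O_K^\times)$, and to match it against the filtration $K \subset K[1] = K(j(E)) \subset K(j(E), h(E[\m]))$, where the top layer has Galois group a quotient of $(\O_K/\m)^\times$ cut out by the units, the quotient being visible precisely because the Weber function quotients $E[\m]$ by $\Aut(E)$. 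I would then conclude by citing \cite{SilvermanAdvanced} (Chapter II) for the precise statement of the CM main theorem and the resulting identification, since the excerpt explicitly flags that the reader may take such class field theory statements as a black box.
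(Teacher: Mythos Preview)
The paper does not prove this theorem. It is stated without proof in Section~4.1 as part of an expository overview of complex multiplication, with the reader directed to \cite{SilvermanAdvanced}, Chapter~II, for details; the surrounding remark explicitly says such class field theory statements may be taken as a black box. Your final sentence---citing Silverman and invoking the CM main theorem as input---is therefore exactly what the paper itself does, and the rest of your proposal is a reasonable sketch of the argument one finds there.

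One small technical comment on your outline, independent of the comparison: step~(ii) as written (``unramified outside the primes dividing $\m$'') is weaker than what you need, since an abelian extension unramified outside the support of $\m$ can still have conductor $\m^N$ for $N>1$ and hence fail to sit inside $K[\m]$. In practice this is harmless, because your step~(iii)---showing that the Artin symbol $(\a, L/K)$ is trivial precisely when $\a = (\alpha)$ with $\alpha \equiv 1 \bmod \m$---already forces the conductor to divide $\m$ and renders step~(ii) redundant. This is indeed how Silverman's proof (II.5.6) proceeds: it characterizes $K[\m]$ via the primes that split completely rather than via a separate ramification bound plus a degree count, and N\'eron--Ogg--Shafarevich does not appear.
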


\begin{remark}
	\label{rmk:analogofrootsofunity}
	The result of the above theorem can be stated in the following way. Start with a quadratic imaginary field $K$, a modulus $\m \subset \O_K$, and an elliptic curve $E$ with CM by $K$. In order to obtain the ray class field $K[\m]$, one must first adjoin $j(E)$ to $K$, followed by adjoining the $x$-coordinates (or the squares of $x$-coordinates for $K = \Q(\sqrt{-1})$, the cubes for $K = \Q(\sqrt{-3})$) of the $\m$-torsion points of $E$. Thus the analogy of roots of unity for $\Q$ are certain $j$-values and coordinates of elliptic curve torsion points for quadratic imaginary $K$.
\end{remark}

To continue the last point of Remark \ref{rmk:analogofrootsofunity}, we would like to note that the roots of unity are quite simple to describe geometrically, as they are simply points lying on the unit circle in $\C$. However, the $j$-values and $\m$-torsion points of elliptic curves are more complicated to describe geometrically. 

In fact, to the author's knowledge, it seems that there hasn't been much study of elliptic curve torsion points from a graphical perspective. One reason for this might be that elliptic curves inherently live in a four-dimensional $\R$-vector space, which are notoriously difficult to represent graphically (to say the least). However, our study will focus almost exclusively on the $x$-coordinates, so we thought it might be of interest to generate images of the $x$- and $y$-coordinates of elliptic curve torsion points in the complex plane. 

To this end, we offer examples of these images in Figure \ref{fig:TorsionPts}. The images (A) through (F) have no special properties other than their coloring, which is simply the coloring that Python automatically applies to scatter plots (it cycles through a list of colors). The images (G) through (I) take inspiration from \cite{Starscapes}, in which we size the torsion points inversely based on their additive order in the torsion group; that is, the smaller the order of the torsion point, the larger the dot. We discuss our computational methods for these images and others (including how we chose the elliptic curves with CM) after the proof of Proposition \ref{prop:GalgroupofrayclassfieldoverHilbert} in the next section. 

\begin{figure}[h!]
	\centering
	\begin{subfigure}[b]{0.32\linewidth}
		\includegraphics[width=\linewidth]{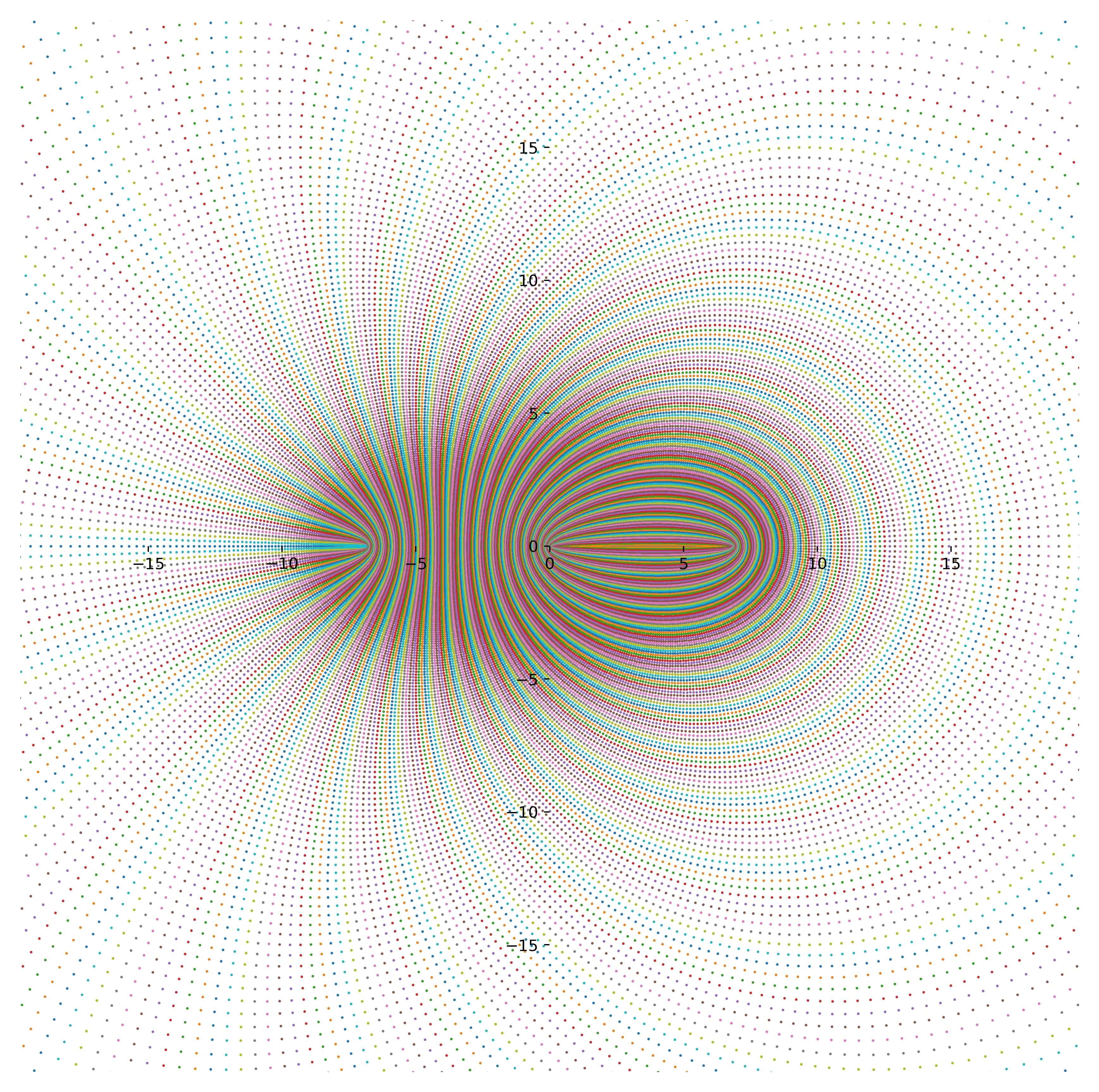}
		\caption{$x$-coordinates, $K = \Q(\sqrt{-1})$}
	\end{subfigure}
	\begin{subfigure}[b]{0.32\linewidth}
		\includegraphics[width=\linewidth]{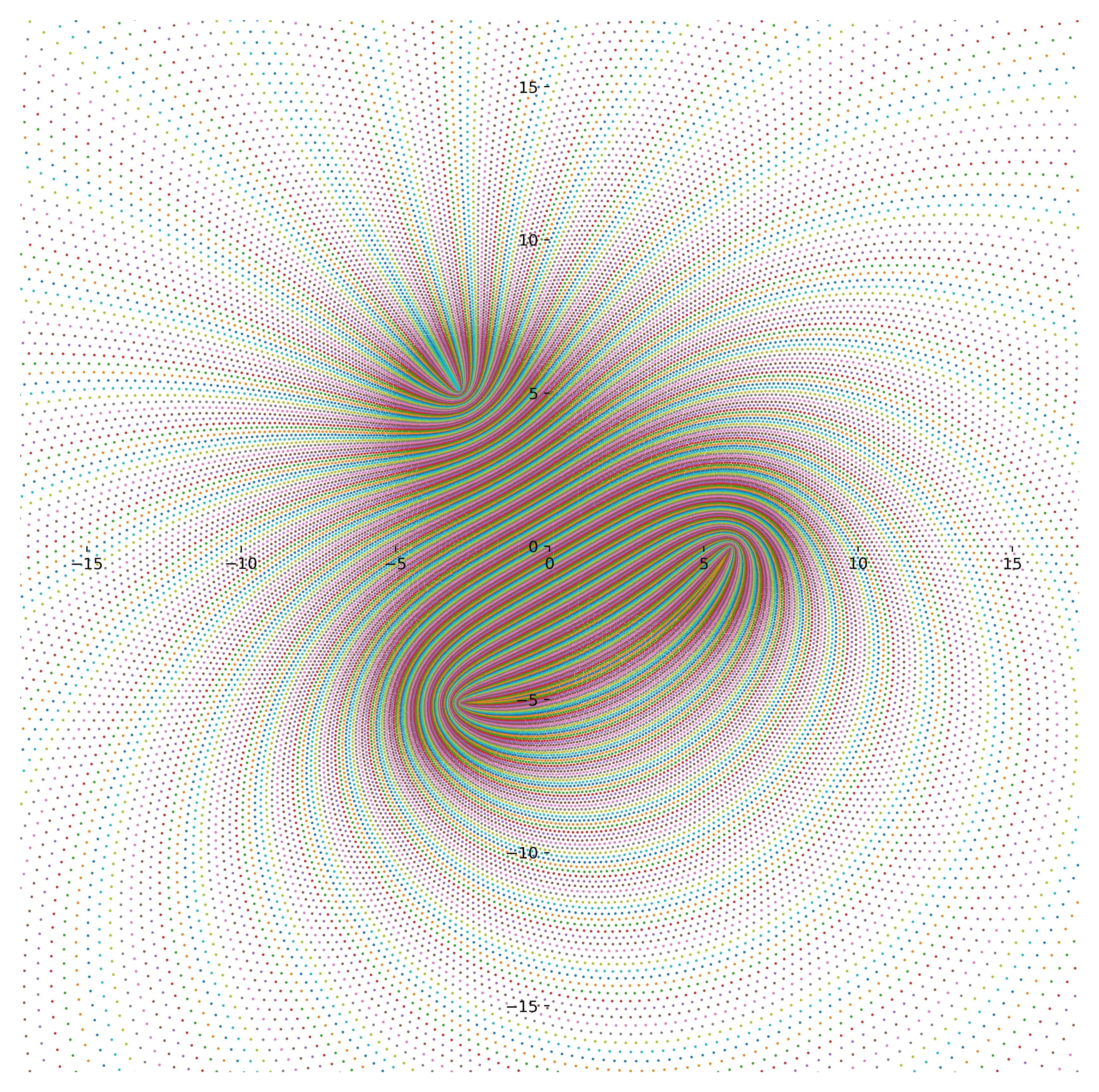}
		\caption{$x$-coordinates, $K = \Q(\sqrt{-3})$}
	\end{subfigure}
	\begin{subfigure}[b]{0.32\linewidth}
		\includegraphics[width=\linewidth]{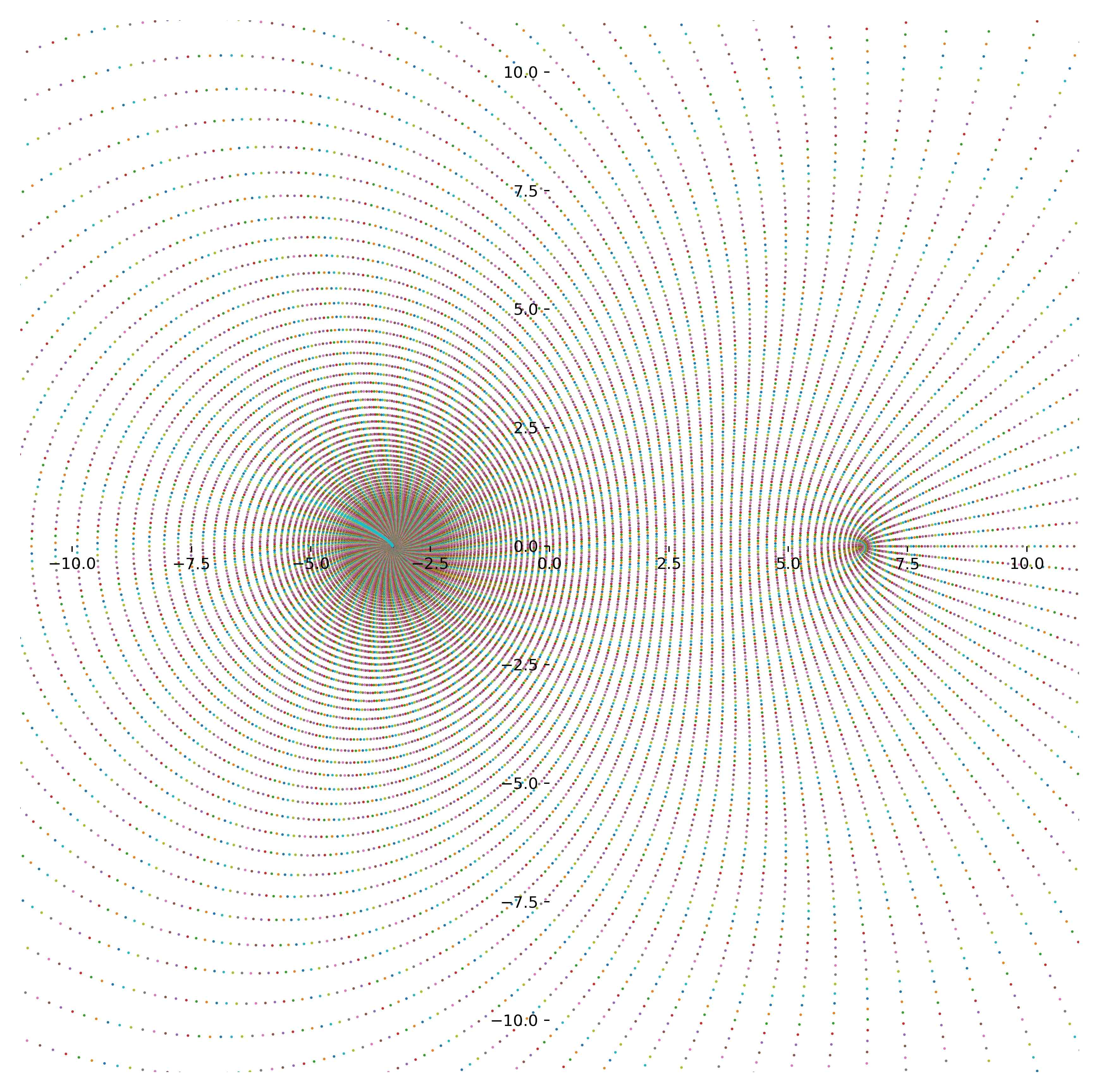}
		\caption{$x$-coordinates, $K = \Q(\sqrt{-43})$}
	\end{subfigure}

	\begin{subfigure}[b]{0.32\linewidth}
		\includegraphics[width=\linewidth]{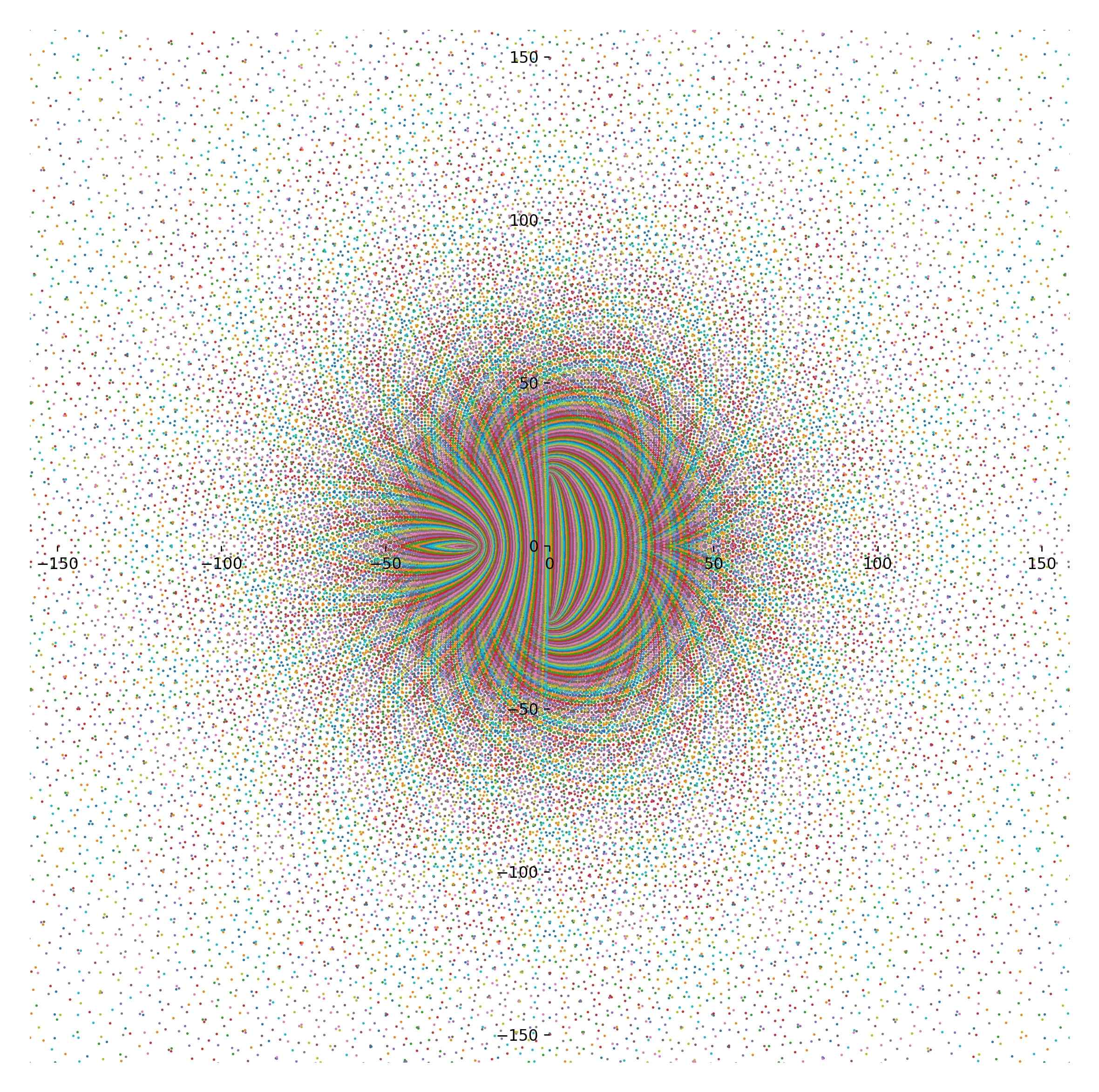}
		\caption{$y$-coordinates, $K = \Q(\sqrt{-1})$}
	\end{subfigure}
	\begin{subfigure}[b]{0.32\linewidth}
		\includegraphics[width=\linewidth]{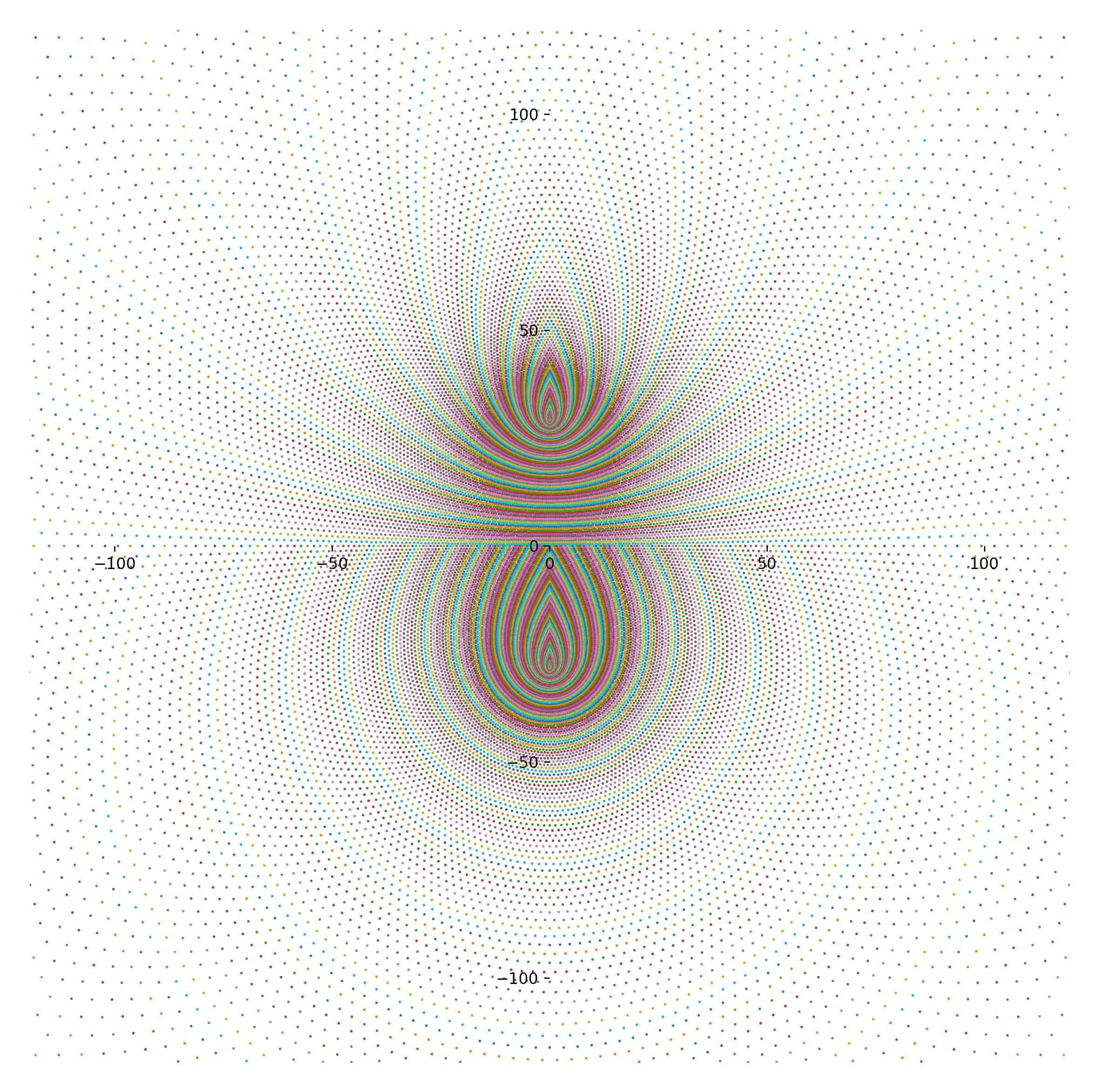}
		\caption{$y$-coordinates, $K = \Q(\sqrt{-3})$}
	\end{subfigure}
	\begin{subfigure}[b]{0.32\linewidth}
		\includegraphics[width=\linewidth]{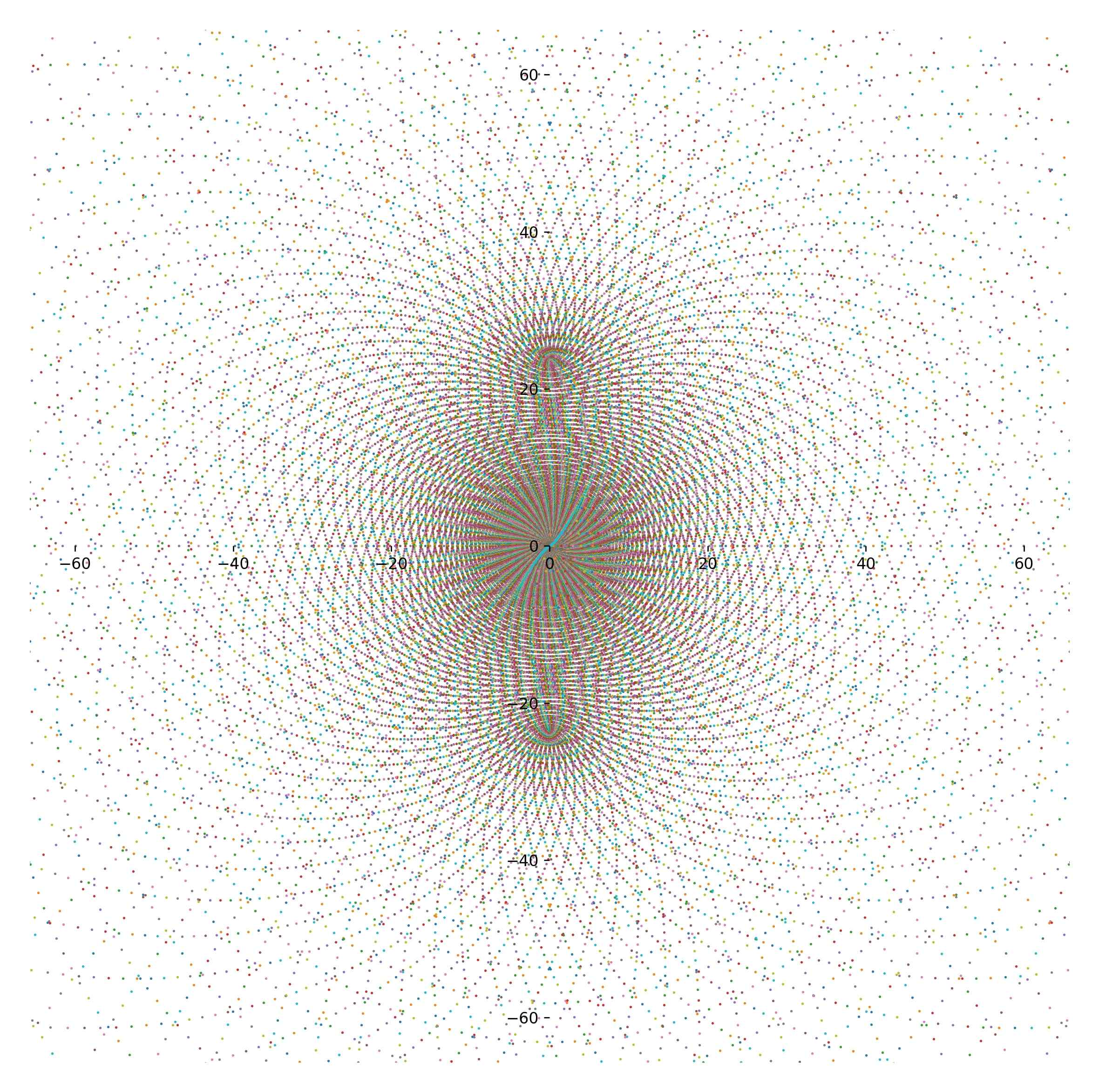}
		\caption{$y$-coordinates, $K = \Q(\sqrt{-43})$}
	\end{subfigure}

	\begin{subfigure}[b]{0.32\linewidth}
		\includegraphics[width=\linewidth]{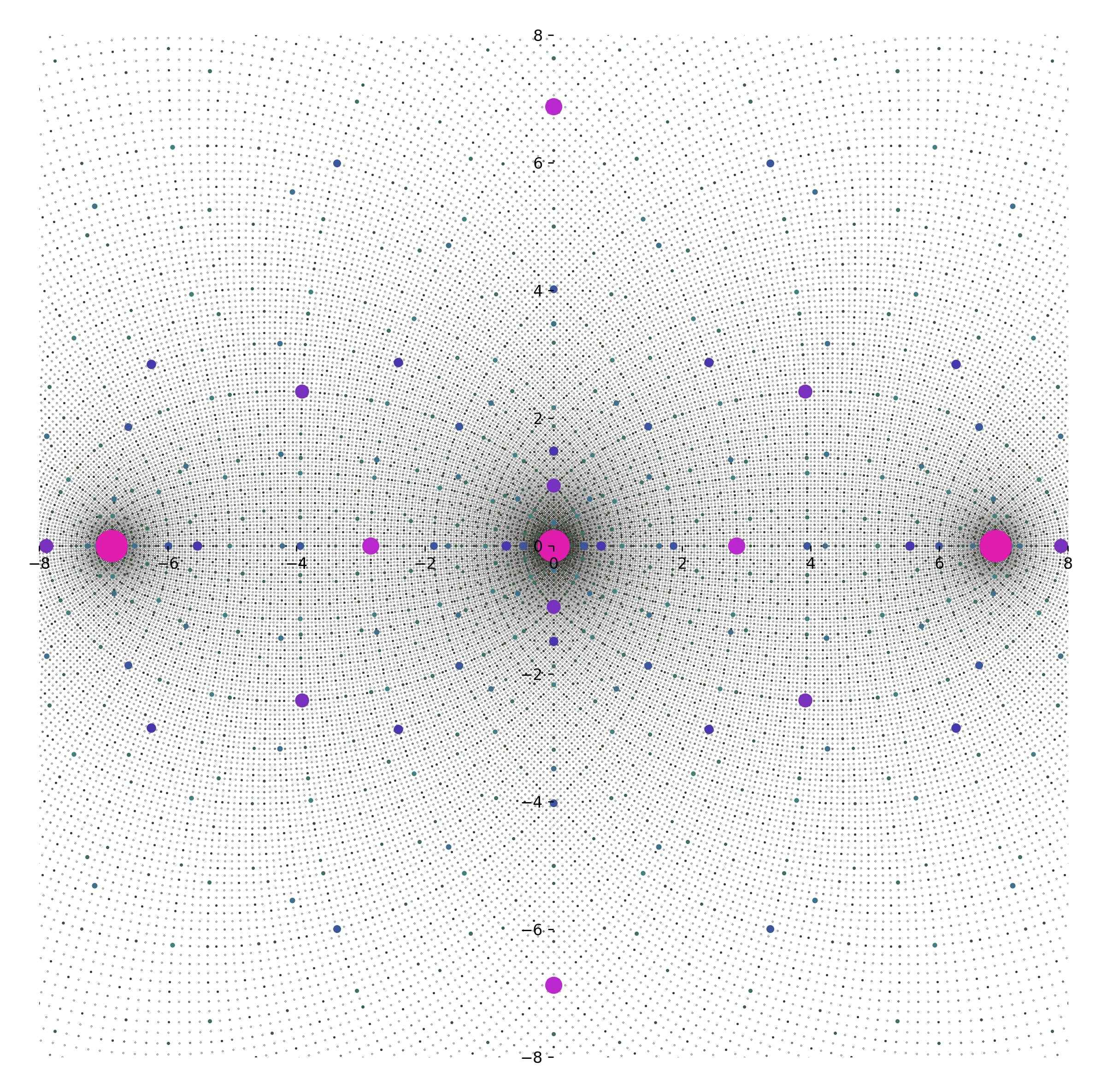}
		\caption{$x$-coordinates, $K = \Q(\sqrt{-1})$}
	\end{subfigure}
	\begin{subfigure}[b]{0.32\linewidth}
		\includegraphics[width=\linewidth]{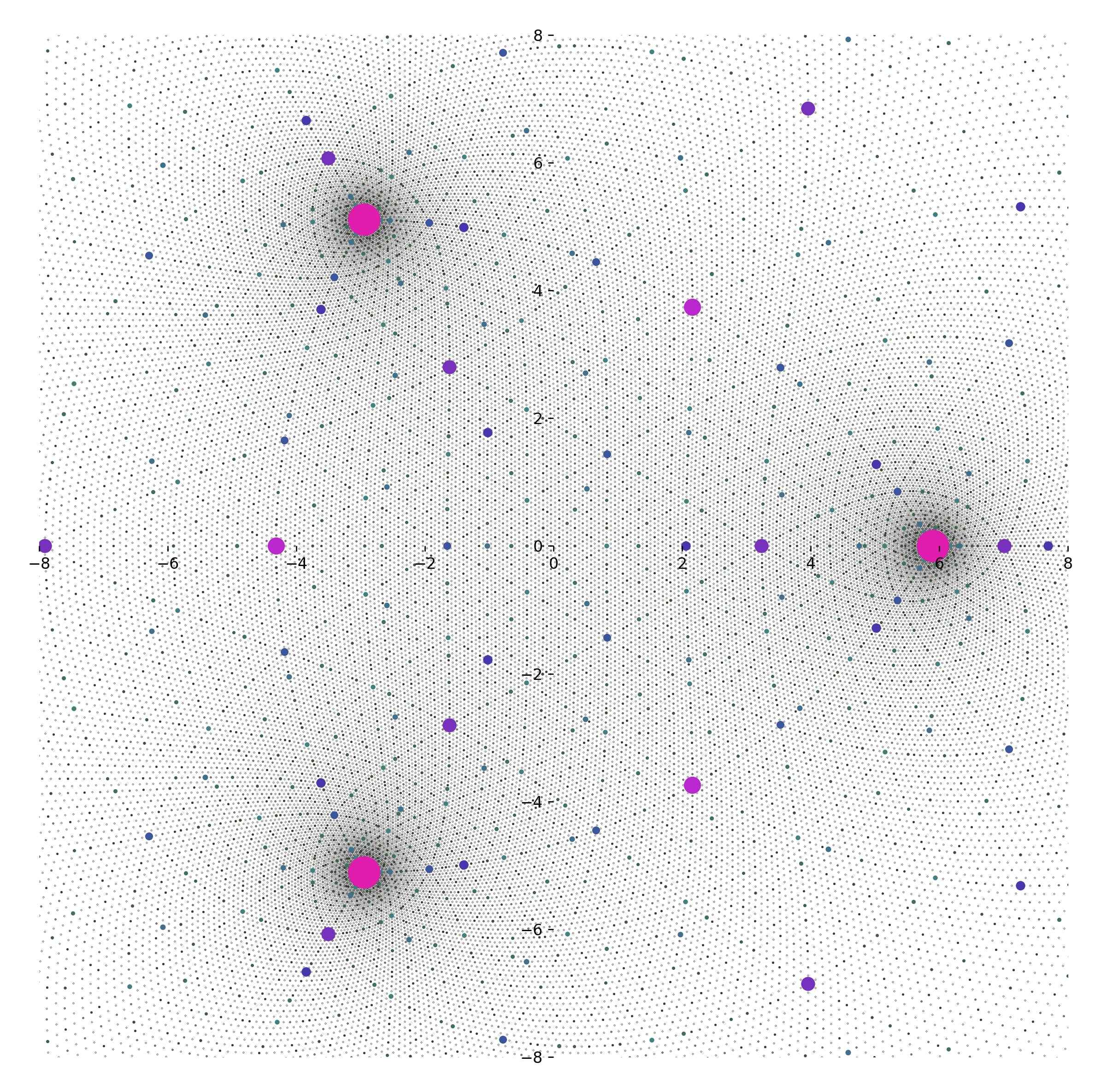}
		\caption{$x$-coordinates, $K = \Q(\sqrt{-3})$}
	\end{subfigure}
	\begin{subfigure}[b]{0.32\linewidth}
		\includegraphics[width=\linewidth]{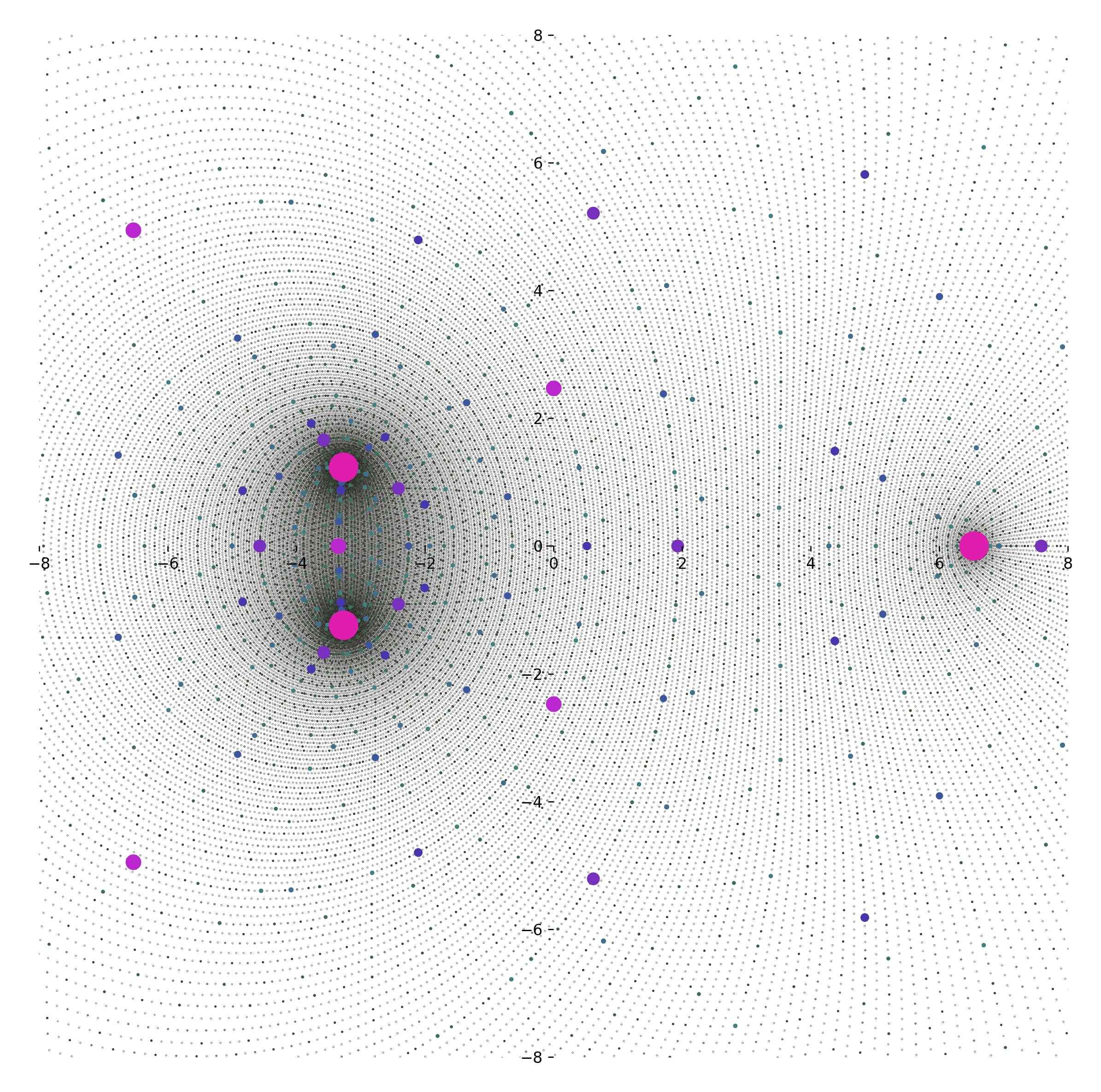}
		\caption{$x$-coordinates, $K = \Q(\sqrt{-7})$}
	\end{subfigure}
	
	
	\caption{Coordinates of 400-torsion points of elliptic curves $E \cong \C/\O_K$}
	\label{fig:TorsionPts}
\end{figure}

\subsection{The Galois Group and Its Action}

Our goal now is to compute the ray class group of modulus $\m$ for the quadratic imaginary field $K$ and to describe its Galois action on the ray class field. 

Recall that the ideal class group $\Cl_K(1)$ is contained in $\Cl_K(\m)$ for any modulus $\m$. Thus, in order to compute the full ray class group of modulus $\m$, we also need to compute the ideal class group for any quadratic imaginary field $K$. This part of the computation can be rather complex, and its difficulty depends on the base field. That is, computing the full group $\Cl_K(\m) \cong \Gal(K[\m]/K)$ can be quite difficult. However, computing the subgroup $\Gal(K[\m]/K[1])$ proves to be more straightforward using class field theory. The following proposition describes the structure of this group. 

\begin{proposition}
	\label{prop:GalgroupofrayclassfieldoverHilbert}
	Let $K = \Q(\sqrt{-D})$ be a quadratic imaginary field and $\m \subset \O_K$ an ideal. Let $K[\m]$ be the ray class field for $K$ of modulus $\m$, and let $K[1]$ be its Hilbert class field. Let $\rho$ be a primitive third root of unity. Then the field extension $K[1] \subset K[\m]$ has Galois group $$\Gal(K[\m]/K[1]) \cong \left( \O_K/\m \O_K \right)^\times/\O_K^\times,$$ where $\O_K^\times$ is the cyclic multiplicative group $\{\pm 1, \pm i\}$ when $D = 1$, $\{\pm 1, \pm \rho, \pm \bar \rho\}$ when $D = 3$, and $\{\pm 1\}$ otherwise.
\end{proposition}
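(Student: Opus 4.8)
The plan is to derive this as the imaginary-quadratic specialization of the standard exact sequence attaching a ray class group to the ideal class group, together with Artin reciprocity. By the correspondence recalled in the definitions, $\Gal(K[\m]/K)\cong\Cl_K(\m)$ and $\Gal(K[1]/K)\cong\Cl_K(1)$; moreover the Artin map is functorial in the modulus, so the restriction homomorphism $\Gal(K[\m]/K)\to\Gal(K[1]/K)$ coming from $K[1]\subseteq K[\m]$ is identified with the canonical surjection $\Cl_K(\m)\onto\Cl_K(1)$ (this surjection exists precisely because the congruence conditions defining $\Cl_K(\m)$ refine ``being principal''). By Galois theory $\Gal(K[\m]/K[1])$ is the kernel of that restriction map, so everything reduces to computing $\ker\!\big(\Cl_K(\m)\onto\Cl_K(1)\big)$ and identifying it with $(\O_K/\m\O_K)^\times/\O_K^\times$.

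To compute that kernel I would either quote the standard exact sequence $\O_K^\times\to(\O_K/\m\O_K)^\times\to\Cl_K(\m)\to\Cl_K(1)\to 1$ from a class field theory reference such as \cite{Childress}---which applies verbatim here, since an imaginary quadratic field has no real places and hence the modulus carries no archimedean data---or reprove it directly as follows. Let $I^\m$ be the group of fractional $\O_K$-ideals coprime to $\m$, let $P^\m\subseteq I^\m$ be its subgroup of principal ideals, and let $P^\m_1\subseteq P^\m$ consist of those with a generator multiplicatively congruent to $1$ modulo $\m$; using that every ideal class (for either modulus) has a representative coprime to $\m$, one has $\Cl_K(\m)=I^\m/P^\m_1$ and $\Cl_K(1)=I^\m/P^\m$ with the same $I^\m$, so the kernel in question is $P^\m/P^\m_1$. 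One then defines $(\O_K/\m\O_K)^\times\to P^\m/P^\m_1$ by $\bar\alpha\mapsto[(\alpha)]$ for a lift $\alpha\in\O_K$ coprime to $\m$ (which exists by the Chinese Remainder Theorem); well-definedness and surjectivity are routine, and the point is that the kernel of this map is exactly the image of $\O_K^\times$ under reduction modulo $\m$, because $(\alpha)\in P^\m_1$ iff $\alpha$ agrees in $(\O_K/\m\O_K)^\times$ with the reduction of a unit. This gives $\Gal(K[\m]/K[1])\cong(\O_K/\m\O_K)^\times/\O_K^\times$, with $\O_K^\times$ understood via its image in $(\O_K/\m\O_K)^\times$.

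Finally I would identify $\O_K^\times$ itself. Dirichlet's unit theorem gives unit rank $r_1+r_2-1=0$ for an imaginary quadratic field, so $\O_K^\times$ is exactly the group of roots of unity contained in $K$; a $\zeta_n$ lies in a quadratic field only for $n\in\{1,2,3,4,6\}$, and deciding which of $\mu_4$ or $\mu_6$ actually lies in $K=\Q(\sqrt{-d})$ gives $\{\pm1,\pm i\}$ when $d=1$, the sixth roots of unity $\{\pm1,\pm\rho,\pm\bar\rho\}$ when $d=3$, and $\{\pm1\}$ for every other square-free $d>0$, which is exactly the trichotomy in the statement. I expect the only real obstacle to be careful bookkeeping rather than anything conceptual: verifying well-definedness and the precise kernel of the map $(\O_K/\m\O_K)^\times\to P^\m/P^\m_1$ in the presence of the multiplicative congruence conditions modulo $\m$, and being scrupulous that the quotient is by the \emph{image} of $\O_K^\times$, since reduction modulo $\m$ need not be injective on $\O_K^\times$ for small $\m$ (e.g.\ $-1\equiv1$ modulo $(2)$ in $\Z[i]$).
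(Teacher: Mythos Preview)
Your argument is correct and complete, but it follows a genuinely different route from the paper. You work entirely in the classical ideal-theoretic language: you identify $\Gal(K[\m]/K[1])$ with $\ker\big(\Cl_K(\m)\twoheadrightarrow\Cl_K(1)\big)$, then compute that kernel as $P^\m/P^\m_1$ and exhibit the isomorphism $(\O_K/\m\O_K)^\times/\O_K^\times \cong P^\m/P^\m_1$ directly (or quote the standard exact sequence). The paper instead uses the id\`elic formulation of class field theory: it writes $\Gal(K[\m]/K)\cong \A_K^\times/U_\m K^\times$ and $\Gal(K[1]/K)\cong \A_K^\times/U_1 K^\times$, takes the kernel $U_1K^\times/U_\m K^\times$ of the obvious surjection, massages this via the second isomorphism theorem into $U_1/U_\m\O_K^\times$, and then reduces to a local computation $\O_{K,v}^\times/(1+\p_v^{\ord_v(\m)}\O_{K,v})\cong(\O_K/\p_v^{\ord_v(\m)})^\times$ at each $v\mid\m$. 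Your approach is more elementary and self-contained for a reader who has only seen ray class groups defined via ideals; the paper's approach is cleaner if one already has the id\`elic machinery on hand, and the crucial step $U_1\cap U_\m K^\times = U_\m\O_K^\times$ is exactly where the fact that global units are the only elements of $K^\times$ lying in every $\O_{K,v}^\times$ enters, paralleling your kernel computation. Your closing caveat about quotienting by the \emph{image} of $\O_K^\times$ is a nice point that the paper's proof leaves implicit.
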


It is worth noting that, while the result of this proposition seems to be known by those who use class field theory regularly, it is only scarcely mentioned explicitly in the literature. Because of this, we have decided to include a proof of this fact.

\begin{proof}
	First, using class field theory, we know that $\Gal(K[\m]/K) \cong \A_K^\times/U_\m K^\times$, where $\A_K^\times$ is the group of id{\`e}les and $$U_\m = \C^\times \cdot \prod_{v < \infty, v \nmid \m} \O_{K,v}^\times \cdot \prod_{v < \infty, v \mid \m} (1 + \p_v^{\ord_v(\m)} \O_{K,v}),$$ where $\O_{K,v}$ is the completion of $\O_K$ with respect to the place $v$, and where $\p_v$ is the prime ideal corresponding to the place $v$. Similarly, we know that $\Gal(K[1]/K) \cong \A_K^\times/U_1 K^\times$. 
	
	By Galois theory, the group $\Gal(K[\m]/K[1])$ is a subgroup of $\Gal(K[\m]/K)$, and we have the isomorphism $$\Gal(K[1]/K) \cong \Gal(K[\m]/K)/\Gal(K[\m]/K[1]).$$ This isomorphism is induced by the surjective homomorphism $$\varphi: \Gal(K[\m]/K) \onto \Gal(K[1]/K), \hspace{0.5in} \sigma \mapsto \sigma|_{K[1]},$$ whose kernel is isomorphic to $\Gal(K[\m]/K[1])$.
	
	This homomorphism can be described id{\`e}lically in the following way: $$\psi: \A_K^\times/U_\m K^\times \to \A_K^\times/U_1 K^\times, \hspace{.3in} (a_v)_v + U_\m K^\times \mapsto (a_v)_v + U_1 K^\times.$$ Note that $U_\m \subset U_1$, so this map is surjective. The elements of $\ker(\psi)$ are cosets of $U_\m K^\times$ which are contained in $U_1 K^\times$, and so $\ker(\psi) \cong U_1 K^\times/U_\m K^\times$. Additionally, the isomorphisms $\A_K^\times/U_\m K^\times \cong \Gal(K[\m]/K)$ and $\A_K^\times/U_1 K^\times \cong \Gal(K[1]/K)$ are given by the Artin map. By a slight abuse of notation, we denote both Artin maps by $\phi$. Then we have the following diagram: 
	\begin{equation*}
		\begin{tikzcd}
			\A_K^\times/U_\m K^\times \arrow{r}{\psi} \arrow[swap]{d}{\phi} & \A_K^\times/U_1 K^\times \arrow{d}{\phi}\\
			\Gal(K[\m]/K) \arrow{r}{\varphi} & \Gal(K[1]/K)
		\end{tikzcd}
	\end{equation*}
	Note that the elements of $\phi(U_1 K^\times/U_\m K^\times)$ are precisely the elements of $\Gal(K[\m]/K)$ which act nontrivially on $K[\m]$, but trivially on $K[1]$. In other words, we have that $$\phi(U_1 K^\times/U_\m K^\times) = \ker(\varphi).$$ This shows that the diagram above is commutative, which induces the following isomorphism: $$\Gal(K[\m]/K[1]) \cong U_1 K^\times/U_\m K^\times,$$ where this uses the fact that $\ker(\varphi) \cong \Gal(K[\m]/K[1])$.
	
	In the interest of computing the group $U_1 K^\times/U_\m K^\times$ explicitly, we let $G = U_1$ and $H = U_\m K^\times$. Then $U_1 K^\times/U_\m K^\times = GH/H$, and by the second isomorphism theorem we have that $$GH/H \cong G/(G \cap H).$$ Note that $G \cap H = U_1 \cap (U_\m K^\times)$. An element of $U_\m K^\times$ is of the form $(a_v \alpha)_v$ for $(a_v)_v \in U_\m$ and $\alpha \in K^\times$. If this element is also in $U_1 = \C^\times \cdot \prod_{v < \infty} \O_{K,v}^\times$, then $\alpha$ must be in $\O_{K,v}^\times$ for every place $v$. This happens if and only if $\ord_v(\alpha) = 0$ for every place $v$, so $\alpha$ must be an element of $\O_K^\times$. Then, since $U_\m \subset U_1$, we have that $U_1 \cap (U_\m K^\times) = U_\m \O_K^\times$. Thus we have the following: $$U_1 K^\times/U_\m K^\times \cong U_1/(U_1 \cap (U_\m K^\times)) = U_1/U_\m \O_K^\times.$$ 
	
	Using the definitions of $U_1$ and $U_\m$, we then have the following: $$U_1/U_\m \O_K^\times \cong \left( \prod_{v < \infty, v \mid \m} \O_{K,v}^\times \right)/ \left( \prod_{v < \infty, v \mid \m} (1 + \p_v^{\ord_v(\m)} \O_{K,v}) \right) \O_K^\times.$$ By the Chinese Remainder Theorem, we need only compute $\O_{K,v}^\times/(1 + \p_v^{\ord_v(\m)} \O_{K,v})$ for each place $v \mid \m$. 
	
	For a fixed place $v$ and a fixed $k \in \N$, we construct a map $$f: \O_{K,v}^\times \to (\O_{K,v}/\p_v^k \O_{K,v})^\times$$ sending a unit $u \mapsto u \mod \p_v^k$. This is a homomorphism, and we claim it is surjective. 
	
	For an element $u \in (\O_{K,v}/\p_v^k \O_{K,v})^\times$, we have $u = a_0 + a_1 \pi + \cdots + a_{k-1} \pi^{k-1}$, where $\pi$ is a uniformizer of $\p_v$, $a_i \in \O_{K,v}/\p_v$ for all $i$, and $a_0 \neq 0$ ($a_0$ cannot be 0 because it is a unit). Then, since $a_0 \neq 0$, there exists an element $\tilde u \in \O_{K,v}^\times$ such that $\tilde u = a_0 + a_1 \pi + \cdots + a_{k-1} \pi^{k-1} + \cdots$ which maps to $u$. 
	
	Now, note that the kernel of $f$ is any element $u \in \O_{K,v}^\times$ such that $u \equiv 1 \mod \p_v^k$. That is, $u \in (1 + \p_v^k \O_{K,v})$. Thus $\ker(f) =  (1 + \p_v^k \O_{K,v})$, so $$\O_{K,v}^\times/(1 + \p_v^k \O_{K,v}) \cong (\O_{K,v}/\p_v^k \O_{K,v})^\times.$$ By a well-known fact about completions, we also have that $$(\O_{K,v}/\p_v^k \O_{K,v})^\times \cong (\O_K/\p_v^k \O_K)^\times.$$ Thus we have the following: $$U_1/U_\m \O_K^\times \cong \left( \prod_{v < \infty, v \mid \m} (\O_K/\p_v^{\ord_v(\m)}\O_K)^\times \right) / \O_K^\times \cong (\O_K/\m\O_K)^\times/\O_K^\times,$$ where the last equality comes from reversing the direction of the Chinese Remainder Theorem.
\end{proof}

Now that we know the structure of $\Gal(K[\m]/K[1])$, we need to determine its action on $K[\m]$. Since $K[\m] = K(j(E), h(E[\m]))$ and $K[1] = K(j(E))$, then the action of $\Gal(K[\m]/K[1])$ on $K[\m]$ permutes only the elliptic curve torsion points and leaves the $j$-values fixed. In other words, we want to determine the action of $(\O_K/\m \O_K)^\times/\O_K^\times$ on the set $h(E[\m])$.

We first need an elliptic curve with CM by $K$. Recall that choosing an elliptic curve is equivalent to choosing a lattice in $\C$ by the Uniformization Theorem, so the simplest way to choose an elliptic curve with CM by $K$ is to choose the lattice $\Lambda = \O_K$. This lattice will automatically be closed under multiplication by elements of $\O_K$, so $\C/\O_K$ will have CM by $K$. 

For our purposes, however, we need to be more explicit. For $K = \Q(\sqrt{-D})$, we use the lattice $\Lambda := \Z + \Z\alpha$, where $\alpha \in \O_K$ is chosen in the fundamental domain for $\text{SL}_2(\Z)$ to be the following: $$\alpha = \begin{cases} \sqrt{-D} & D \equiv 1, 2 \mod 4, \\ \frac{-1 + \sqrt{-D}}{2} & D \equiv 3 \mod 4. \end{cases}$$ Note that in the second case, we could have equivalently chosen $\alpha = \frac{1 + \sqrt{-D}}{2}$. We chose this definition so that $\alpha$ is a third root of unity in the case $D = 3$.

Thus we now have an elliptic curve $E \cong \C/\Lambda$ with CM by $K$. Note that a point $(x,y)$ on our elliptic curve can be found via the Uniformization Theorem by taking a complex number $z \in \C/\Lambda$ and using the Weierstrass $\wp$-function to get $x = \wp(z; \Lambda)$ and $y = \wp'(z; \Lambda)$. Because of this, we will use $\C/\Lambda$ for the majority of our calculations, as it is much simpler in this setting to find torsion points, to add two elements, and to compute the Galois action on $E$ when compared to viewing $E$ as a subset of $\mathbb{P}^2$, even though the latter viewpoint is the one needed in the end. Additionally, we will restrict our attention to the case in which $\m = (m)$ is an ideal generated by an integer. The analogy of Gaussian periods we define in the next section allows for any integral ideal $\m$, but we believe it is more intuitive and insightful to discuss only the $\m = (m)$ case for the time being. 

To determine the $m$-torsion points of $E$, we must find the complex numbers $z$ such that $m z \in \Lambda$. If we restrict $z$ to the fundamental parallelogram, then an $m$-torsion point $z$ must be of the form $z = \frac{1}{m} (a + b\alpha)$, where $a, b \in \{0, 1, \ldots, m - 1\}$. This gives all $m^2$ of the $m$-torsion points, matching the remark made at the end of Definition \ref{def:torsionsbgp}. Note that $z$ can also be viewed as an element of $\O_K/m\O_K$. 

We now define the Galois action. Let $\beta \in (\O_K/m\O_K)^\times/\O_K^\times$, so we can write $\beta = c + d\alpha$ for $c,d \in \Z/m\Z$. Let $\rho \in h(E[m])$; that is, $\rho$ is an element of $K[m]$. Since $\rho$ is a power of the $x$-coordinate of an $m$-torsion point of $E$, then there exist $a, b \in \Z/m\Z$ and a power $e \in \{1, 2, 3\}$ (depending on the base field) such that $z = \frac{1}{m}(a + b \alpha)$ and $\rho = \wp(z; \Lambda)^e$. The action of $\beta$ on $\rho$ is given by $$[\beta] \cdot \rho = \wp(\beta z; \Lambda)^e,$$ where $\beta z$ is the standard multiplication of complex numbers. 

\begin{remark}
	For computational purposes, it is best if we view $z$ and $\beta$ as matrices in $\Mat_2(\Z/m\Z)$, where we represent $\alpha$ with the companion matrix of its minimal polynomial. More explicitly, we use the matrix $$C_\alpha = \begin{cases} \begin{pmatrix} 1 & 0 \\ 0 & -D \end{pmatrix} & D \equiv 1, 2 \mod 4, \\ \begin{pmatrix} 1 & -\frac{D + 1}{2} \\ 0 & -1 \end{pmatrix} & D \equiv 3 \mod 4. \end{cases}$$ to represent $\alpha$. Then we can view $\O_K/m\O_K$ as a subring embedded into $\Mat_2(\Z/m\Z)$ via the map sending $\alpha \mapsto C_\alpha$ and $1 \mapsto I$. Thus an element $\gamma \in \O_K/m\O_K$ is given by $\gamma = a I + b C_\alpha$ for some $a, b \in \Z/m\Z$, and $\gamma$ is an element of $(\O_K/m\O_K)^\times$ if $\det(\gamma)$ is relatively prime to $m$.
\end{remark}

\subsection{Computation and Examples}

Our goal here is to explicitly describe the analogy of Gaussian periods for quadratic imaginary fields and to provide some examples of these computations. We start with a definition.

\begin{definition}
	\label{def:RCFP}
	Let $K$ be a quadratic imaginary field, and choose $\alpha \in K$ so that $\O_K = \Z[\alpha]$ as described in the previous section. Let $\Lambda = \Z + \Z \alpha$ and $E$ be the elliptic curve isomorphic to $\C/\Lambda$. Choose $A \in (\O_K/ \m \O_K)^\times/\O_K^\times$, and let $d$ denote the multiplicative order of $A$. Let $z \in \O_K/\m\O_K$, represented as an element of $\C/\Lambda$, and let $\wp(z) := \wp(z; \Lambda)$ be the Weierstrass $\wp$-function. Then we define the following map: $$\eta_{K, \m, A}: \O_K/\m\O_K \to \C, \hspace{0.5in} \eta_{K, \m, A}(z) = \sum_{j = 0}^{d - 1} \wp(A^j z).$$ We call $\eta_{K, \m, A}(z)$ a \emph{ray class field period (RCFP) of modulus $\m$ and generator $A$}. Additionally, we call $\img(\eta_{K, \m, A})$ the \emph{ray class field period plot (RCFP plot) of modulus $\m$ and generator $A$}. 
\end{definition}

\begin{remark}
	In the above definition, one can choose $A \in (\O_K/\m\O_K)^\times/\O_K^\times$ by first choosing an element $B \in (\O_K/\m \O_K)^\times$ and computing the cyclic subgroup $\langle B \rangle$. If $\langle B \rangle \cap \O_K^\times$ is not the trivial group, then one can choose $A$ by taking $B$ to an appropriate power.
\end{remark}

In Figure \ref{fig:RCFPExamples}, we provide some examples of RCFP plots for various choices of fields $K$, ideals $\m = (m)$, and elements $A$. Note that in these examples, we use fields with class number 1 to avoid the issue of computing the Hilbert class field and ideal class group as discussed above. 

\begin{figure}[h!]
	\centering
	\begin{subfigure}[b]{0.4\linewidth}
		\includegraphics[width=\linewidth]{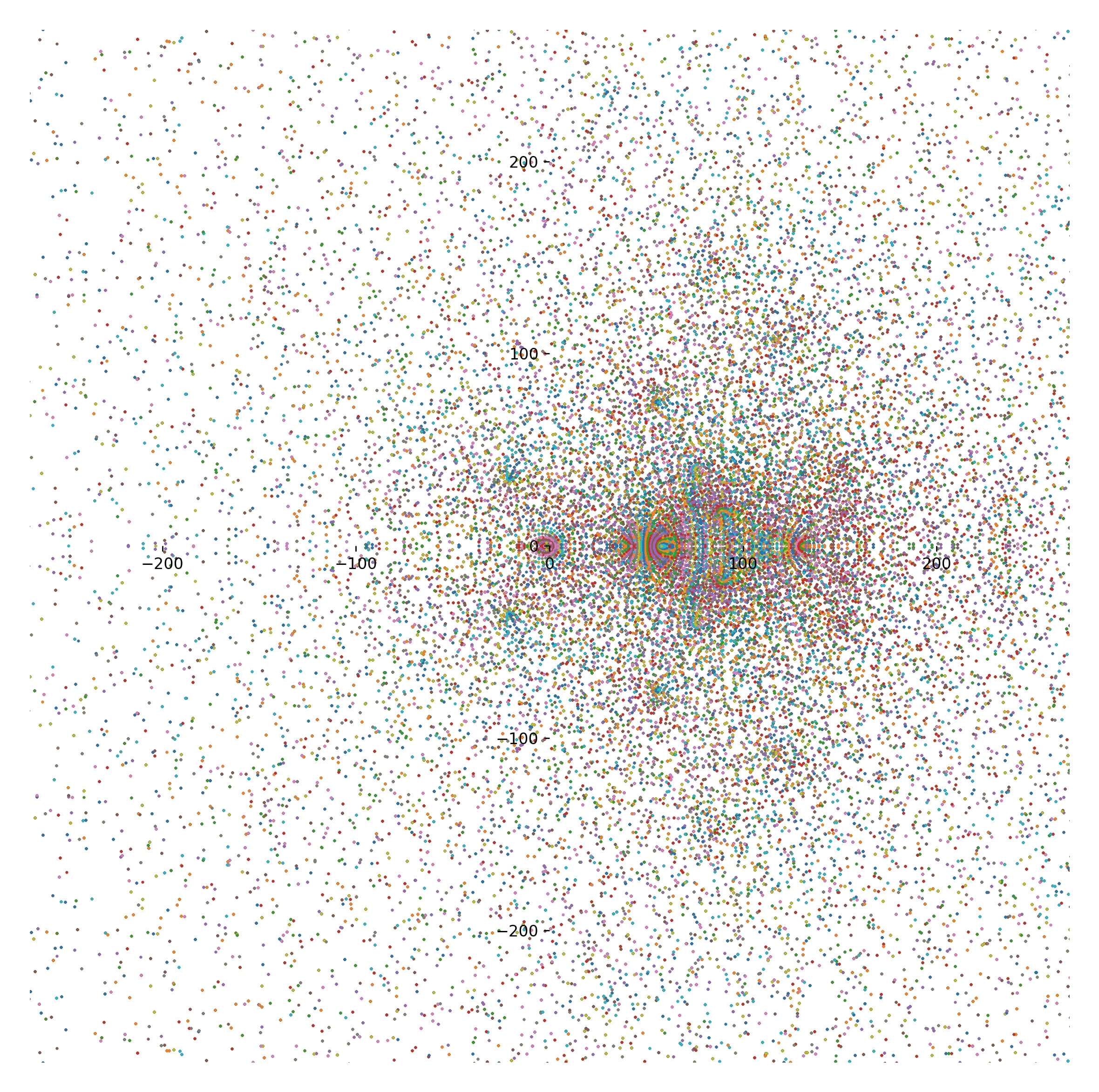}
		\caption{$D = 1$, $m = 1155$, $A = 631$}
	\end{subfigure}
	\begin{subfigure}[b]{0.4\linewidth}
		\includegraphics[width=\linewidth]{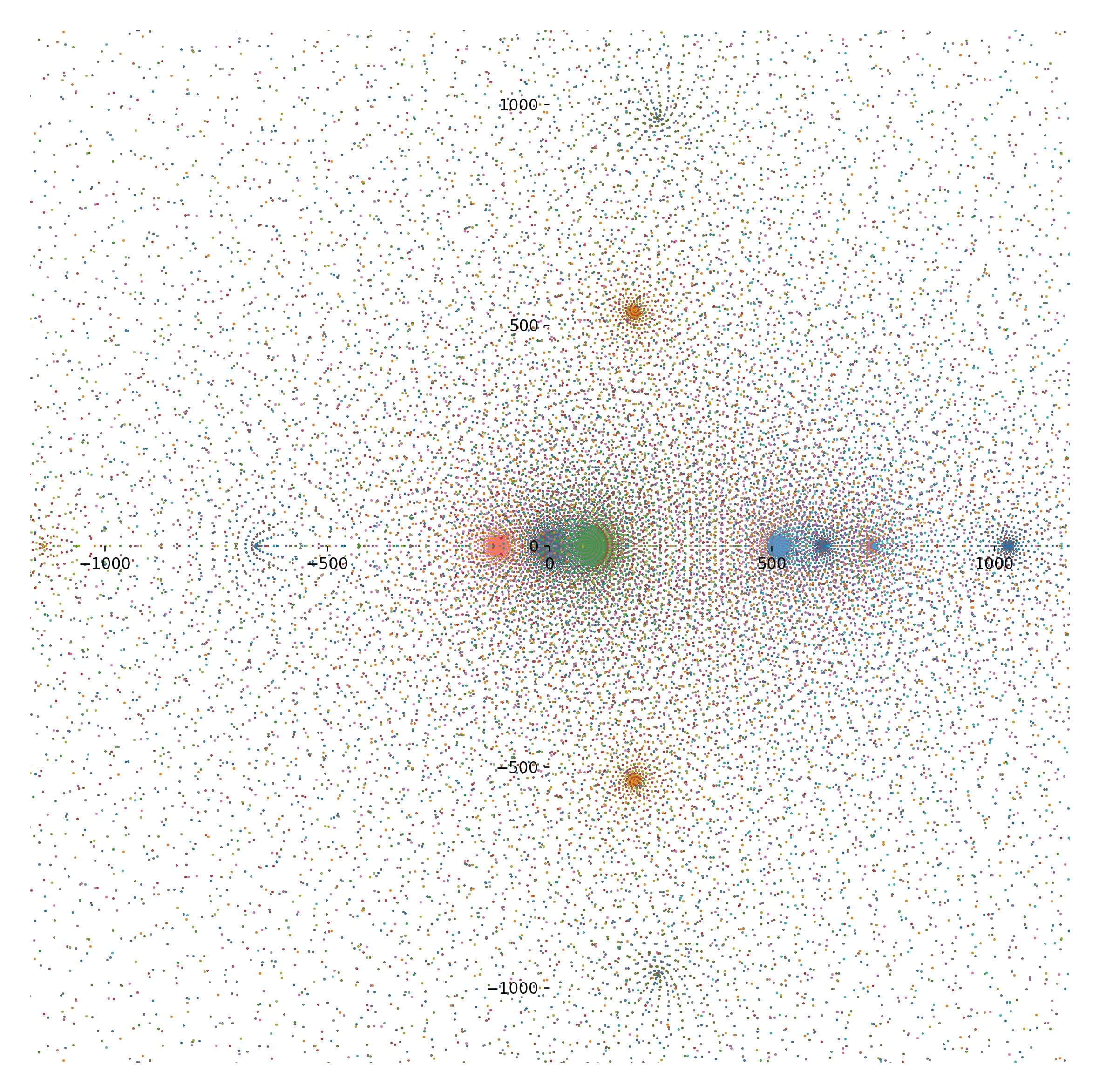}
		\caption{$D = 3$, $m = 1265$, $A = 759 + 254\alpha$}
	\end{subfigure}
	
	\begin{subfigure}[b]{0.4\linewidth}
		\includegraphics[width=\linewidth]{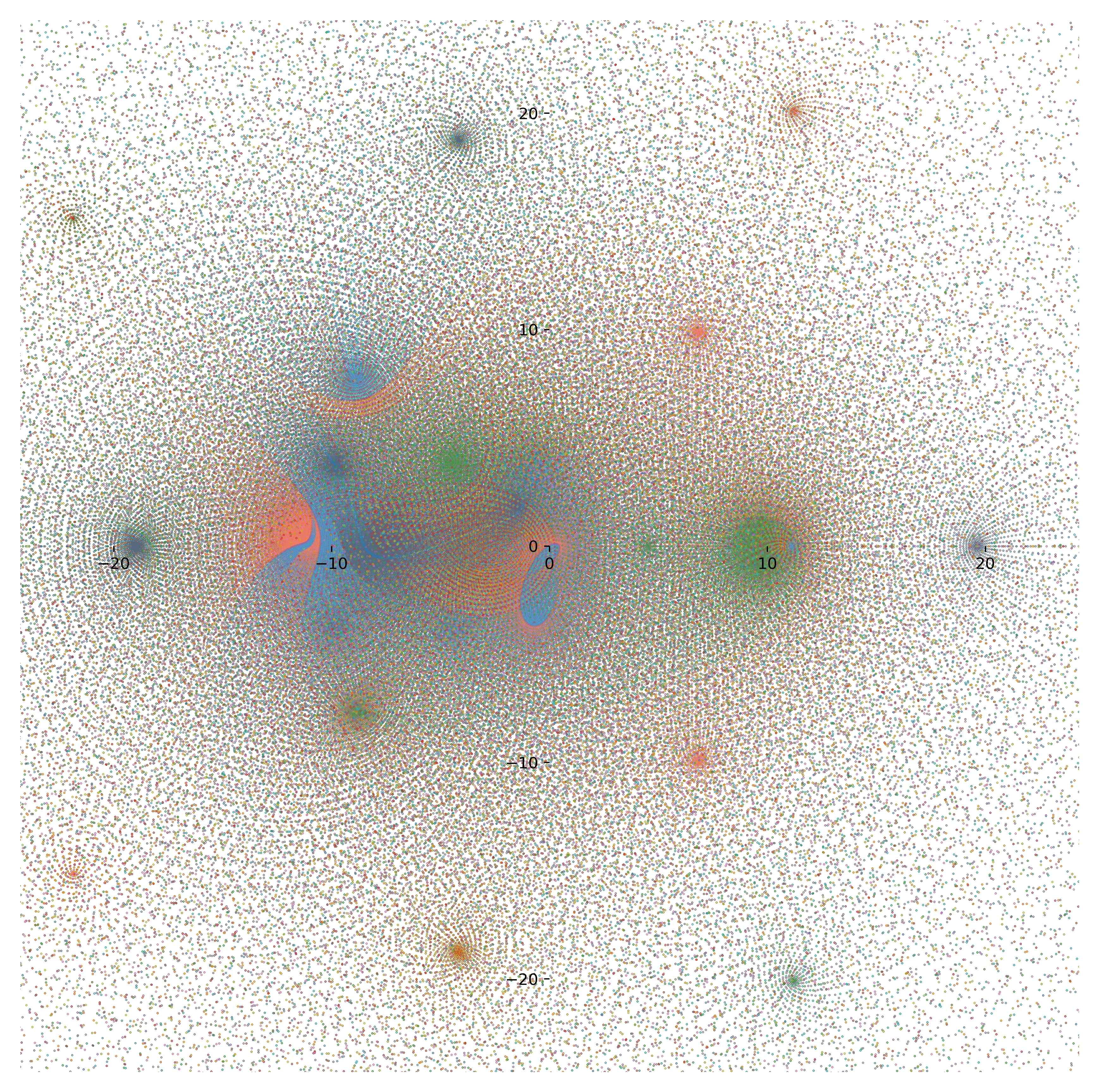}
		\caption{$D = 7$, $m = 1265$, $A = 1 + 253\alpha$}
	\end{subfigure}
	\begin{subfigure}[b]{0.4\linewidth}
		\includegraphics[width=\linewidth]{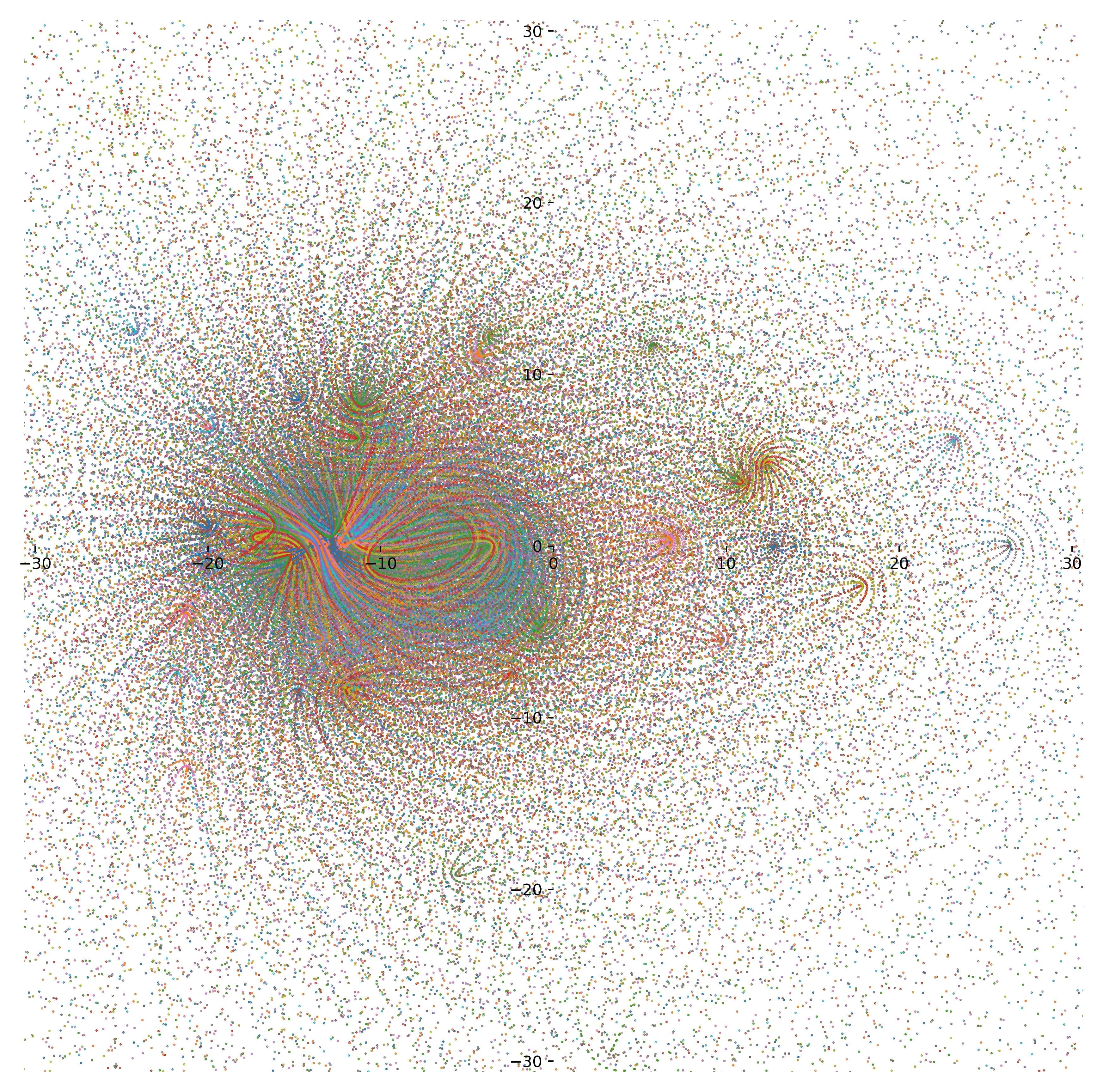}
		\caption{$D = 19$, $m = 1155$, $A = 1079 + 1078\alpha$}
	\end{subfigure}
	
	\caption{RCFP plots for the element $A$ and modulus $(m)$, where $K = \Q(\sqrt{-D})$}
	\label{fig:RCFPExamples}
\end{figure}

It should be noted that the scales for the real and imaginary axes vary in these images. The reason for doing this is because the values of $\wp$ vary largely for different choices of $K$, $\m$, and $z$. In particular, as the norm of the modulus $\m$ increases, the values of the RCFPs get further from the origin (the points ``go off to infinity'' in a sense). This problem did not exist in the Gaussian period setting since all roots of unity lie on the unit circle, so they were bounded in size. Because of this issue, we would like to rescale the RCFPs in some ``nice'' way so that the RCFP plots are contained within some bounded region.

One natural idea is to map RCFPs to the unit disc so the images are all uniform in size, and the ``nice'' property we would like is for our mapping to be conformal. Unfortunately, there are no conformal mappings from $\C$ to the unit disc, so we decided to make do with a rescaling of points that was less nice. We note that we lack knowledge in this area, however, so there could very well be a rescaling map that handles this issue better than the one we describe below.

If $w \in \img(\eta_{K, \m, A})$, we rescale using the map $$w \mapsto \frac{w}{|w| + \sqrt[4]{|\Nm(\m)|}},$$ where $\Nm$ is the norm map from $K$ to $\Q$. In the case where $\m = (m)$, this map is $w \mapsto \frac{w}{|w| + \sqrt{m}}$. Additionally, we would like to comment about the choice of using the 4th root. This was an ad hoc choice that seemed to generate images whose patterns were the easiest to see (at least for the cases where $D \neq 1, 3$) because the points were close enough together while also not being too close. We provide some examples of RCFP plots using this map in Figure \ref{fig:RCFPExamplestoDisc}. 

\begin{figure}[h!]
	\centering
	\begin{subfigure}[b]{0.4\linewidth}
		\includegraphics[width=\linewidth]{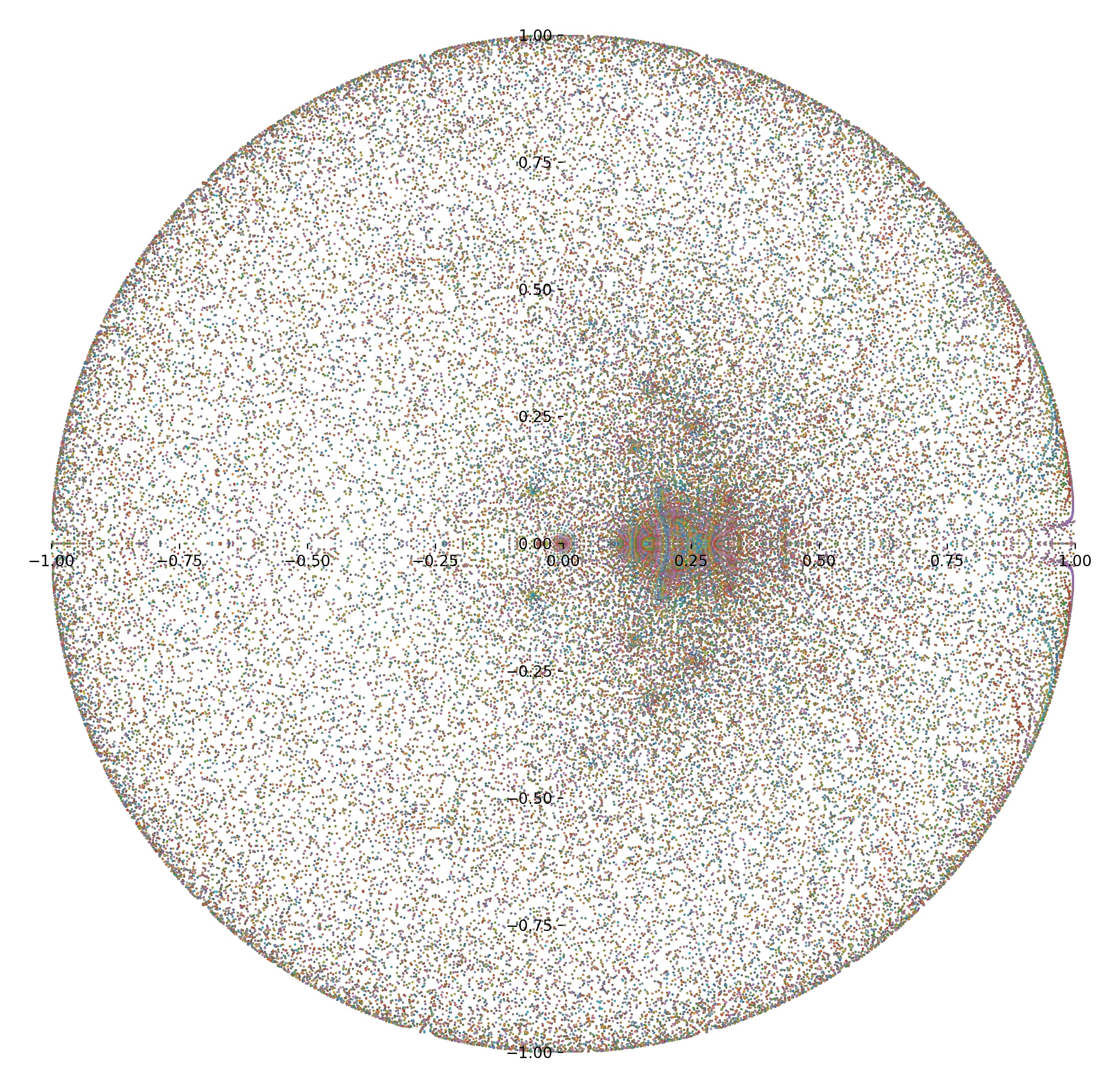}
		\caption{$D = 1$, $m = 1155$, $A = 631$}
	\end{subfigure}
	\begin{subfigure}[b]{0.4\linewidth}
		\includegraphics[width=\linewidth]{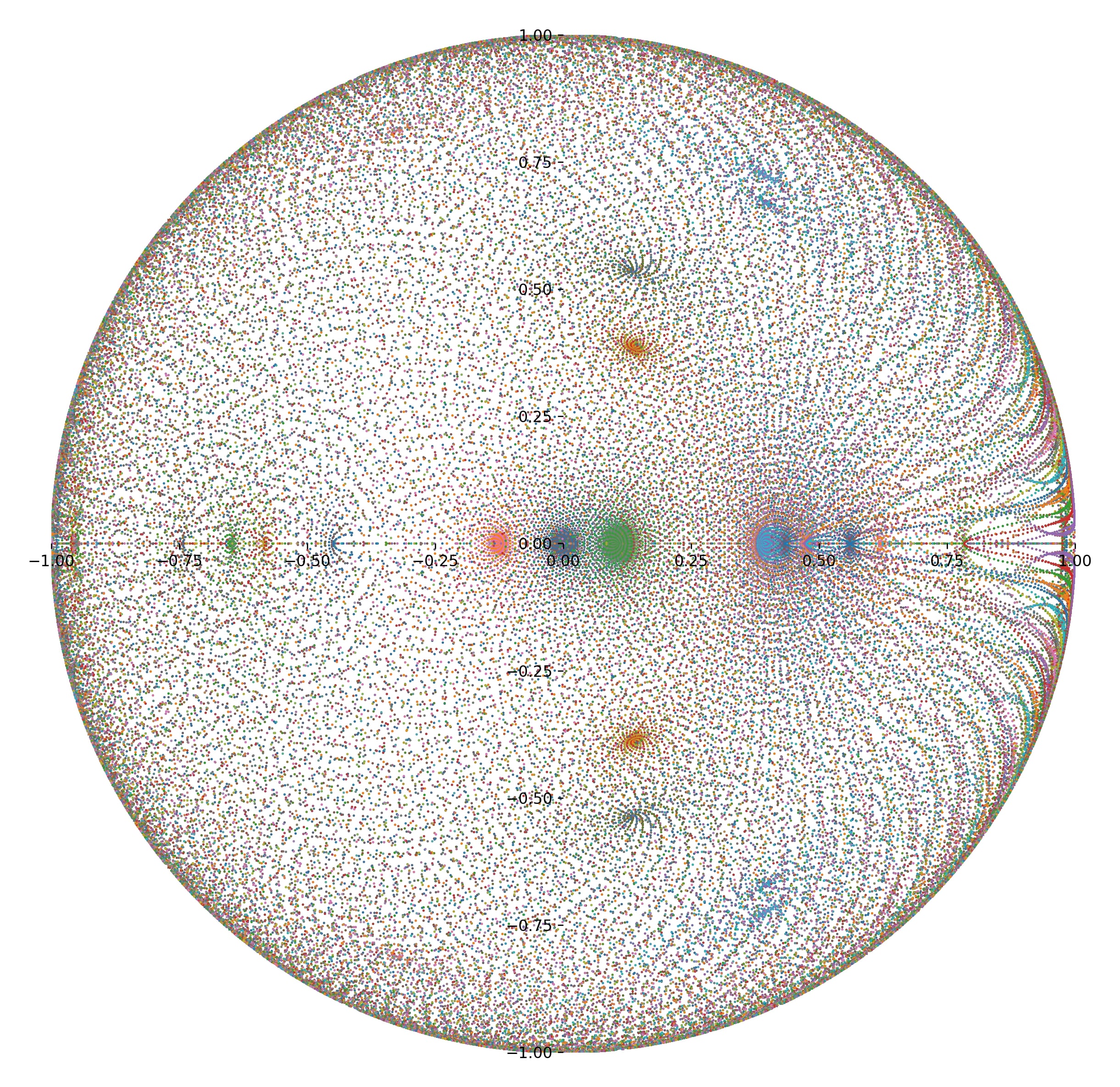}
		\caption{$D = 3$, $m = 1265$, $A = 759 + 254\alpha$}
	\end{subfigure}
	
	\begin{subfigure}[b]{0.4\linewidth}
		\includegraphics[width=\linewidth]{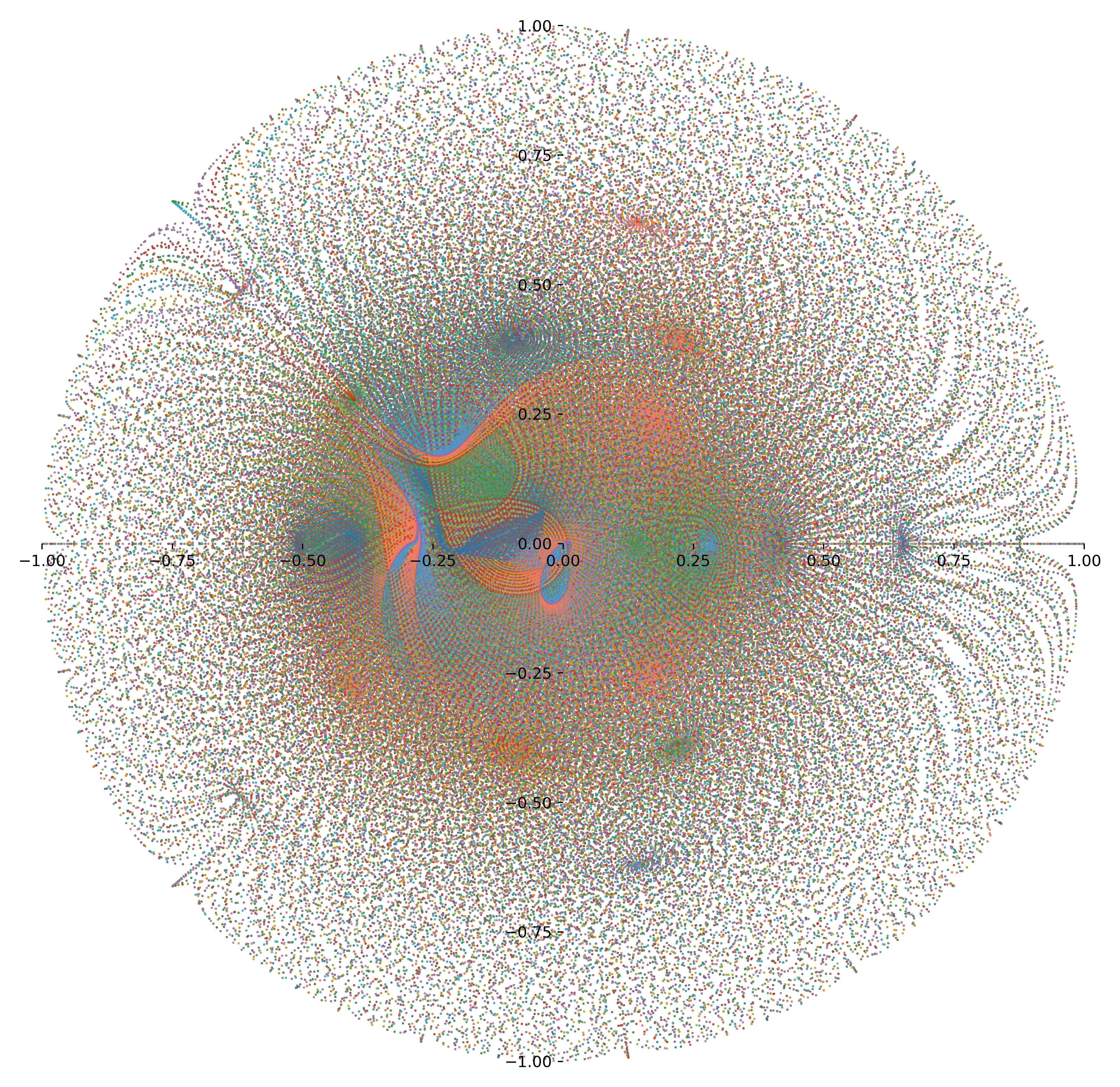}
		\caption{$D = 7$, $m = 1265$, $A = 1 + 253\alpha$}
	\end{subfigure}
	\begin{subfigure}[b]{0.4\linewidth}
		\includegraphics[width=\linewidth]{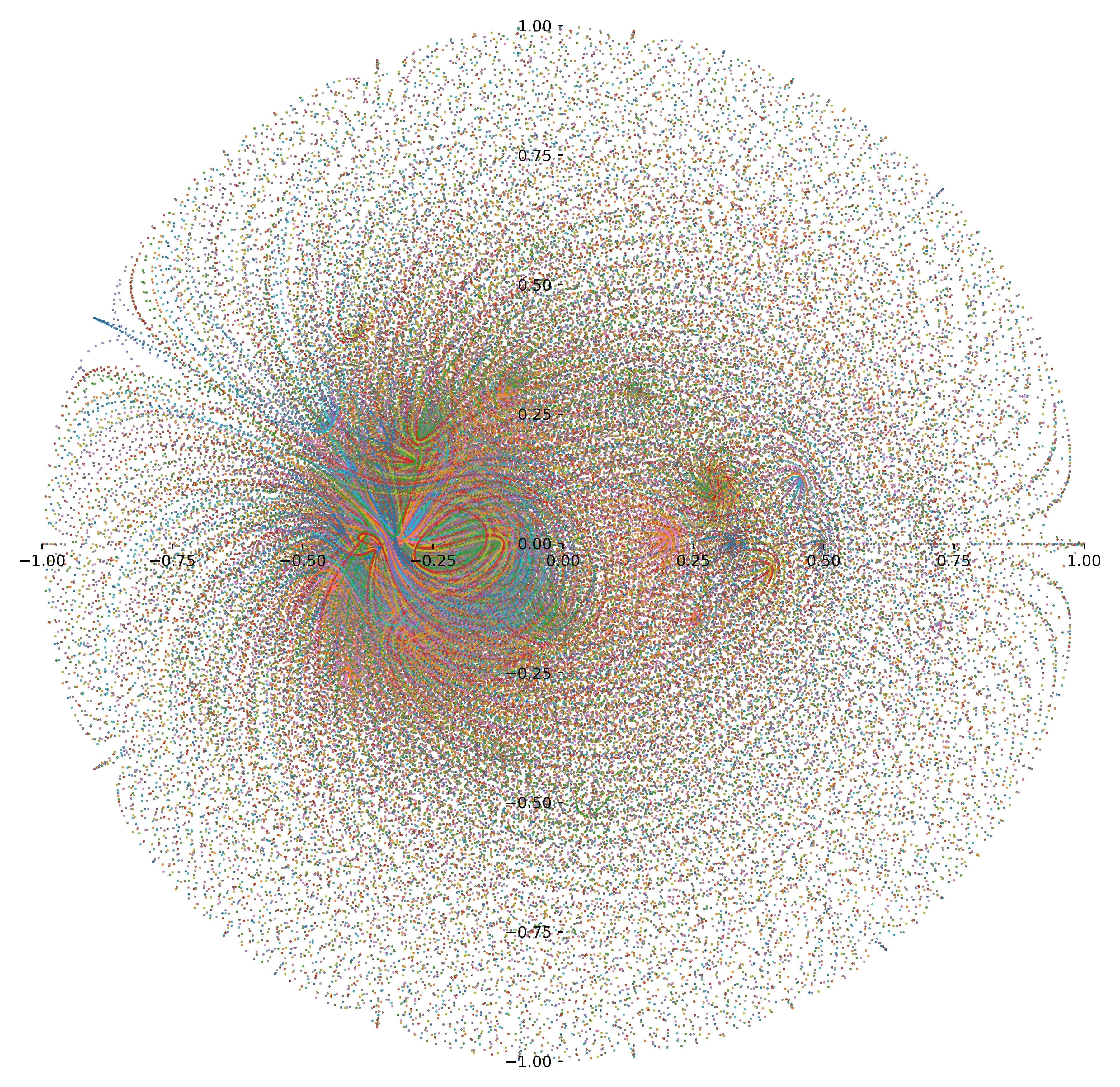}
		\caption{$D = 19$, $m = 1155$, $A = 1079 + 1078\alpha$}
	\end{subfigure}

	\caption{RCFP plots which have been rescaled to the unit disc, using the element $A$ and the modulus $(m)$, where $K = \Q(\sqrt{-D})$}
	\label{fig:RCFPExamplestoDisc}
\end{figure}

\subsection{Observations}

\label{sec:Observations}

There are many observations that can be made about these RCFP plots, though these patterns seem difficult to explain for reasons that will be discussed in the next section. 

We begin by exploring what happens in the analogous situation to the DGL Theorem \ref{thm:DGL} and its generalization Theorem \ref{thm:DGLgeneralize}. We choose a quadratic imaginary field $K = \Q(\sqrt{-D})$ and a modulus $m = p^e$, where $p$ is a rational prime and $e$ is some power. We then choose an element $A$ of the Galois group such that the multiplicative order $d$ of $A \mod p^e$ divides the size of the residue field of the prime $\p \subset \O_K$ lying over $p$. This means that $d \mid (p^2 - 1)$ when $p$ is inert or $d \mid (p - 1)$ when $p$ is split or ramified. We then want to determine the behavior of these RCFP plots as the size of $m$ increases. We provide examples of this situation in Figure \ref{fig:DGLRCFPAnalogue}. Note that for these examples, we use the field $K = \Q(\sqrt{-7})$ and we fix the order of $A$ to be 3. 

\begin{figure}[h!]
	\centering
	\begin{subfigure}[b]{0.32\linewidth}
		\includegraphics[width=\linewidth]{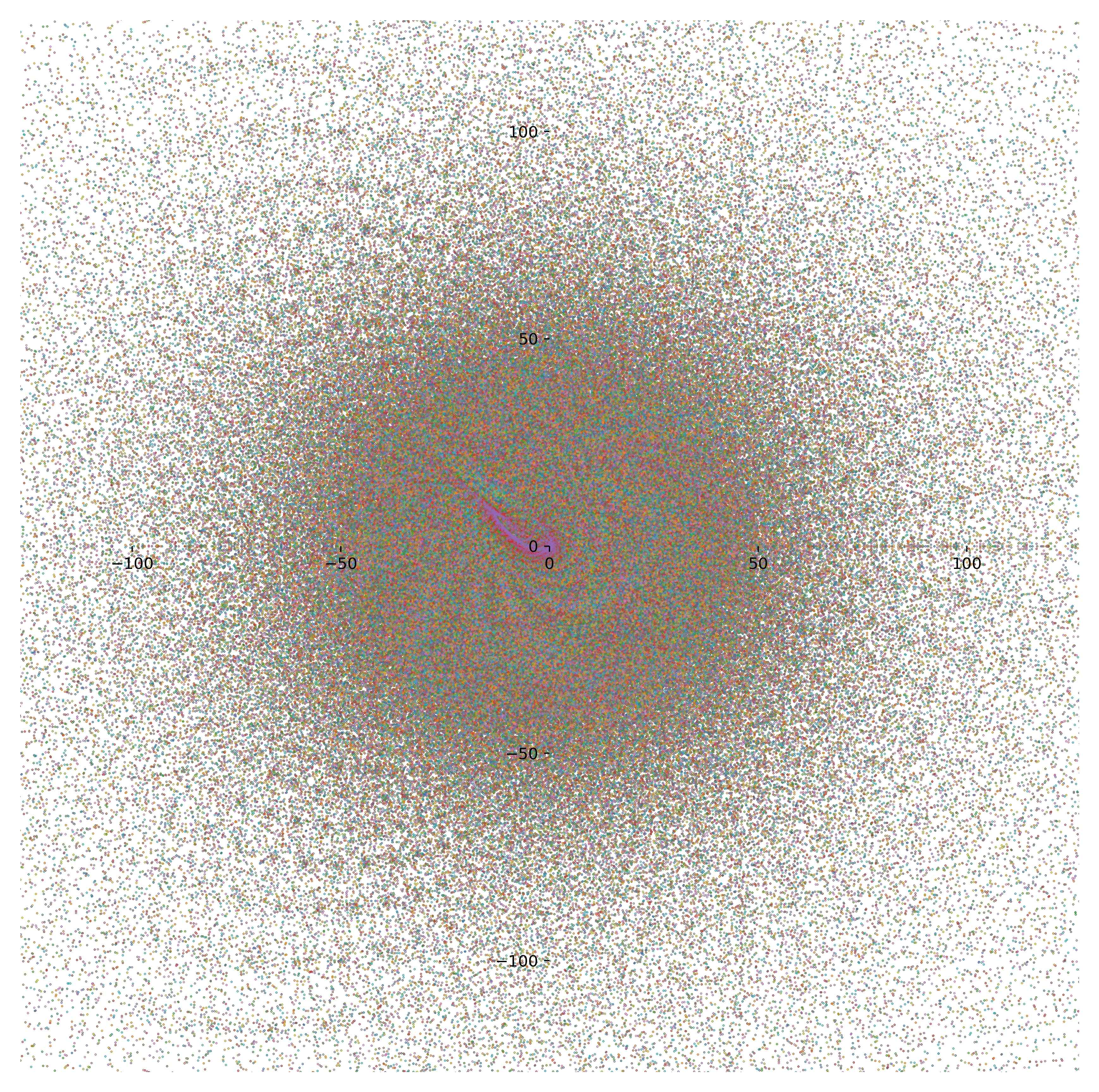}
		\caption{$m = 5^4$, $A = 211 + 423\alpha$}
	\end{subfigure}
	\begin{subfigure}[b]{0.32\linewidth}
		\includegraphics[width=\linewidth]{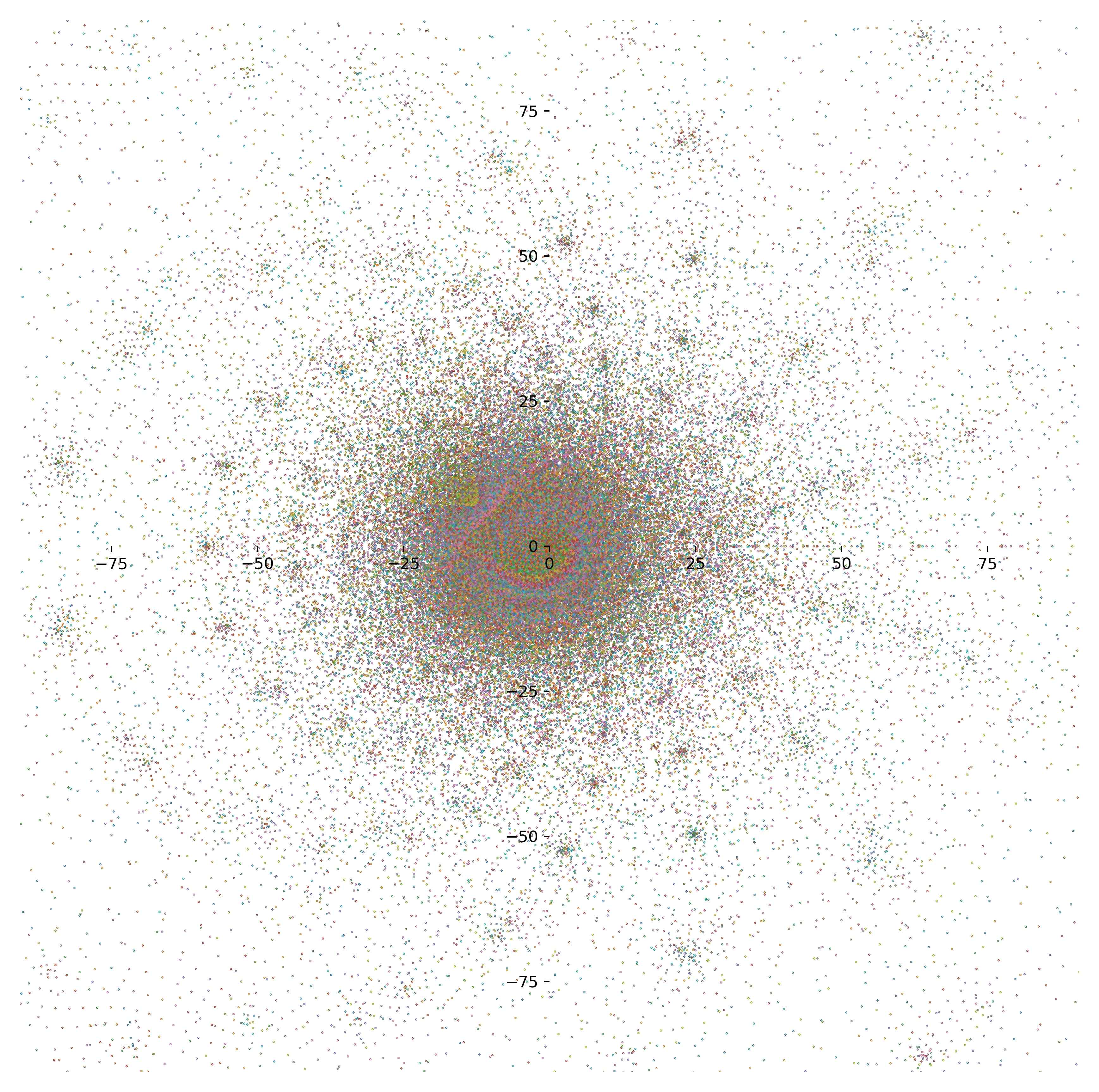}
		\caption{$m = 919$, $A = 52$}
	\end{subfigure}
	\begin{subfigure}[b]{0.32\linewidth}
		\includegraphics[width=\linewidth]{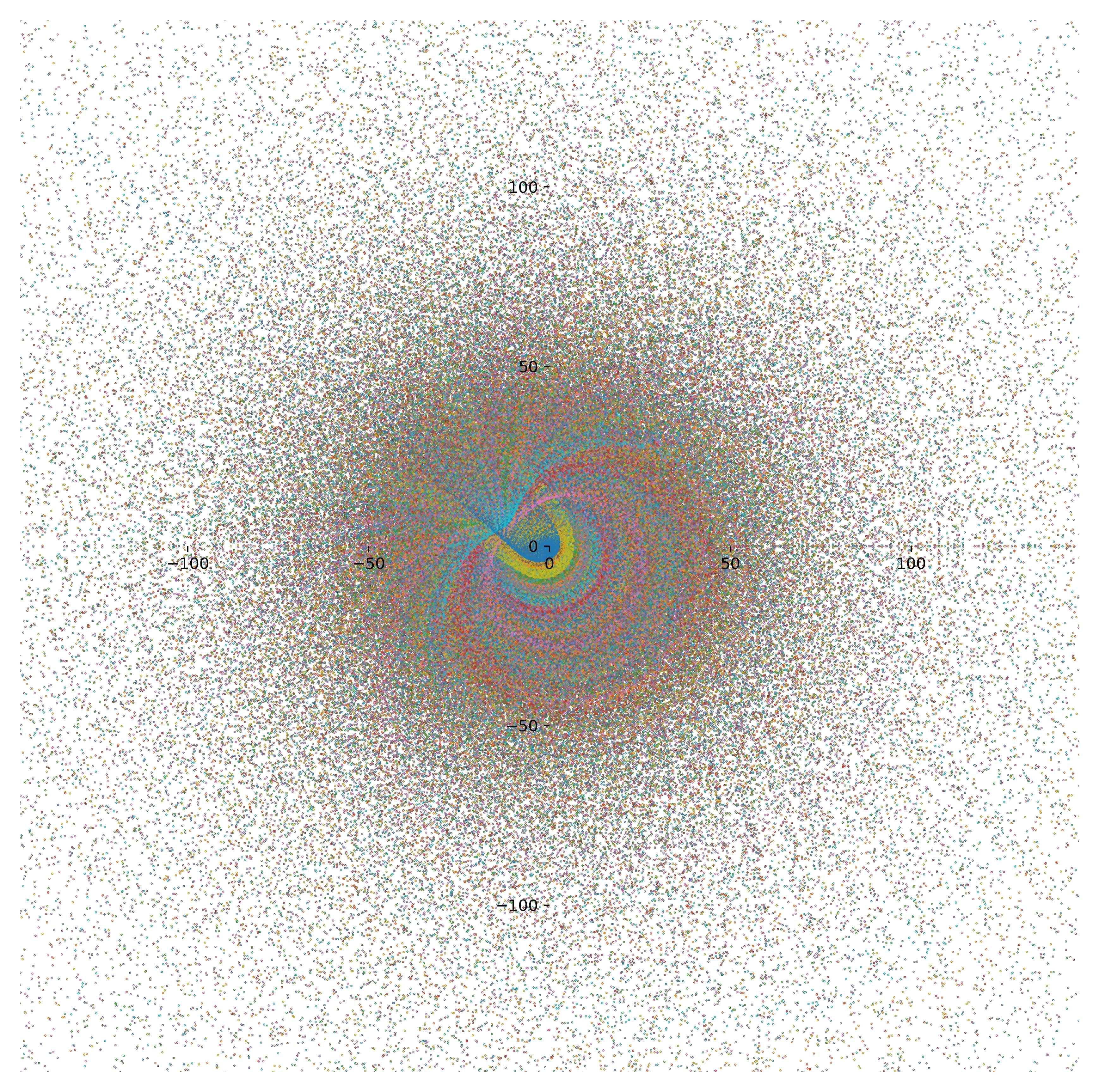}
		\caption{$m = 7^4$, $A = 1353$}
	\end{subfigure}
	
	\caption{RCFP plots for $D = 7$, modulus $(m)$, and $A$ of order $d = 3$}
	\label{fig:DGLRCFPAnalogue}
\end{figure}

One notes that there appear to be certain areas where points seem to accumulate more densely, and this pattern seems to be more pronounced in the case when $p$ is inert (when $p = 5$ for these examples) or split ($p = 919$) compared to the ramified case ($p = 7$). Additionally, these accumulation areas appear to show up at regular intervals, though the number of such areas doesn't seem to have any obvious correlation to the primes in the modulus. The fact that the accumulation points seem unrelated to the modulus is something we might expect if we view this as an analogue to the DGL Theorem (recall that the most important property in the DGL Theorem was the element's order, not the modulus itself).

For the next situation, we again choose a modulus $m = p^e$, but instead of choosing $A$ such that $d$ doesn't divide $p$, we instead choose $A$ so that $d = p^b$ for some $b < e$. Note that in the Gaussian period setting, this situation leads to a Gaussian period plot that is a circle of radius $p^b$, along with a point at the origin. The situation for RCFP plots is a bit more interesting. We provide examples of this in Figure \ref{fig:RCFPSpirals}, where we again use the field $K = \Q(\sqrt{-7})$. Also, note that we used the map mentioned at the end of Section 4.3, since we believe the patterns become more visible under this rescaling.

\begin{figure}[h!]
	\centering
	\begin{subfigure}[b]{0.4\linewidth}
		\includegraphics[width=\linewidth]{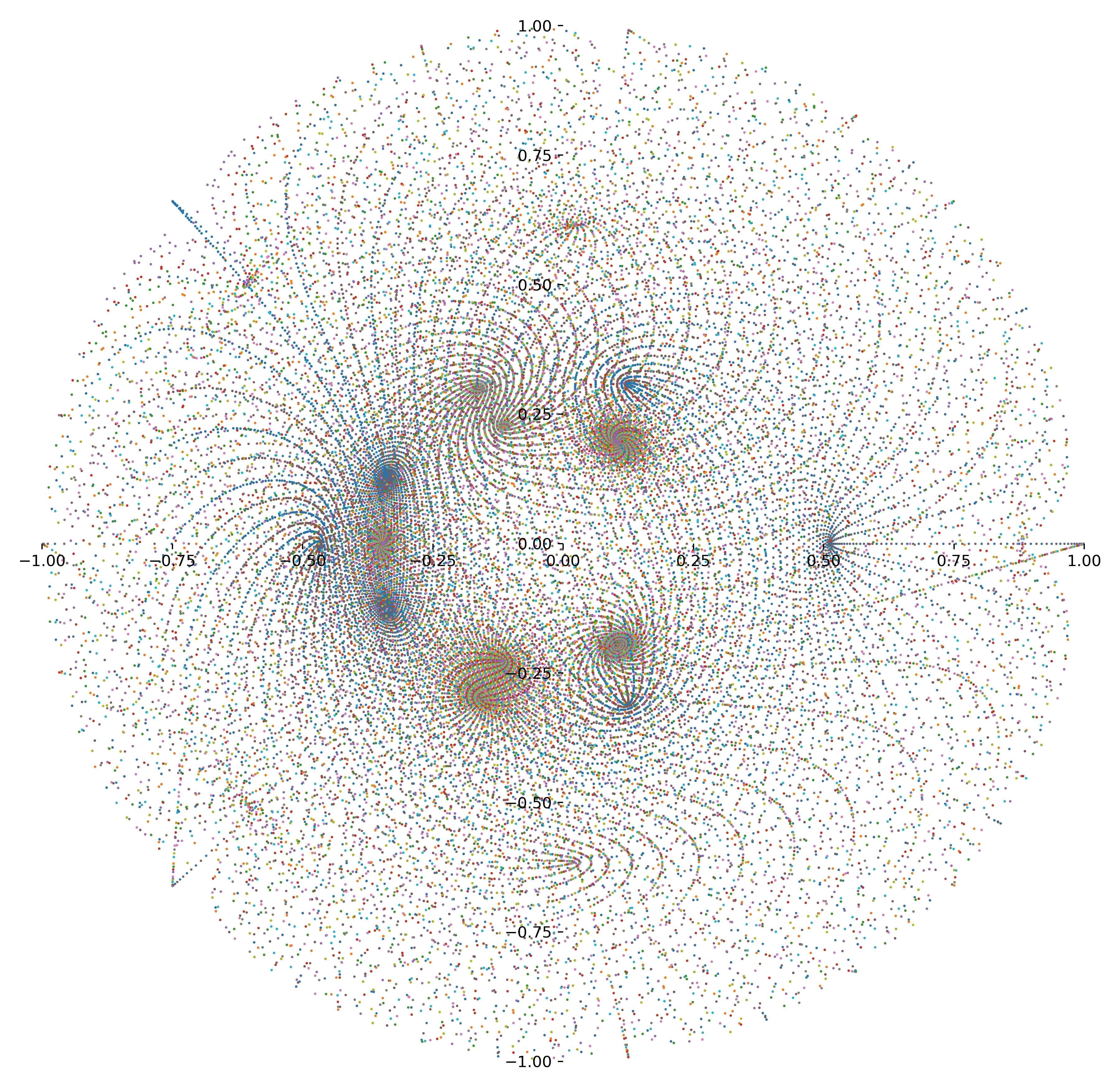}
		\caption{$m = 5^4$, $A = 126 + 125\alpha$}
		\label{fig:RCFPSpiralsA}
	\end{subfigure}
	\begin{subfigure}[b]{0.4\linewidth}
		\includegraphics[width=\linewidth]{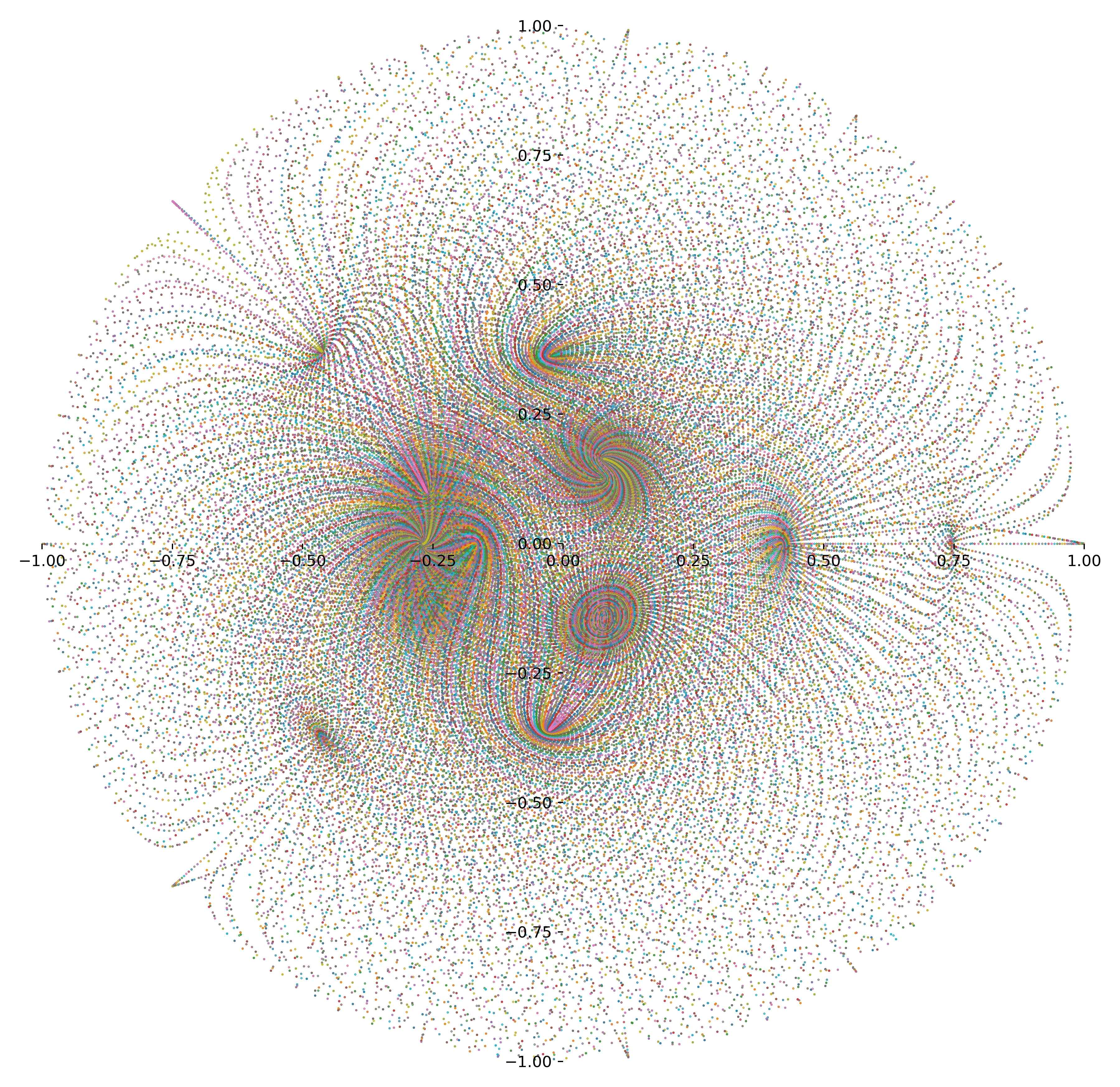}
		\caption{$m = 3^6$, $A = 1 + 243\alpha$}
		\label{fig:RCFPSpiralsB}
	\end{subfigure}

	\begin{subfigure}[b]{0.4\linewidth}
		\includegraphics[width=\linewidth]{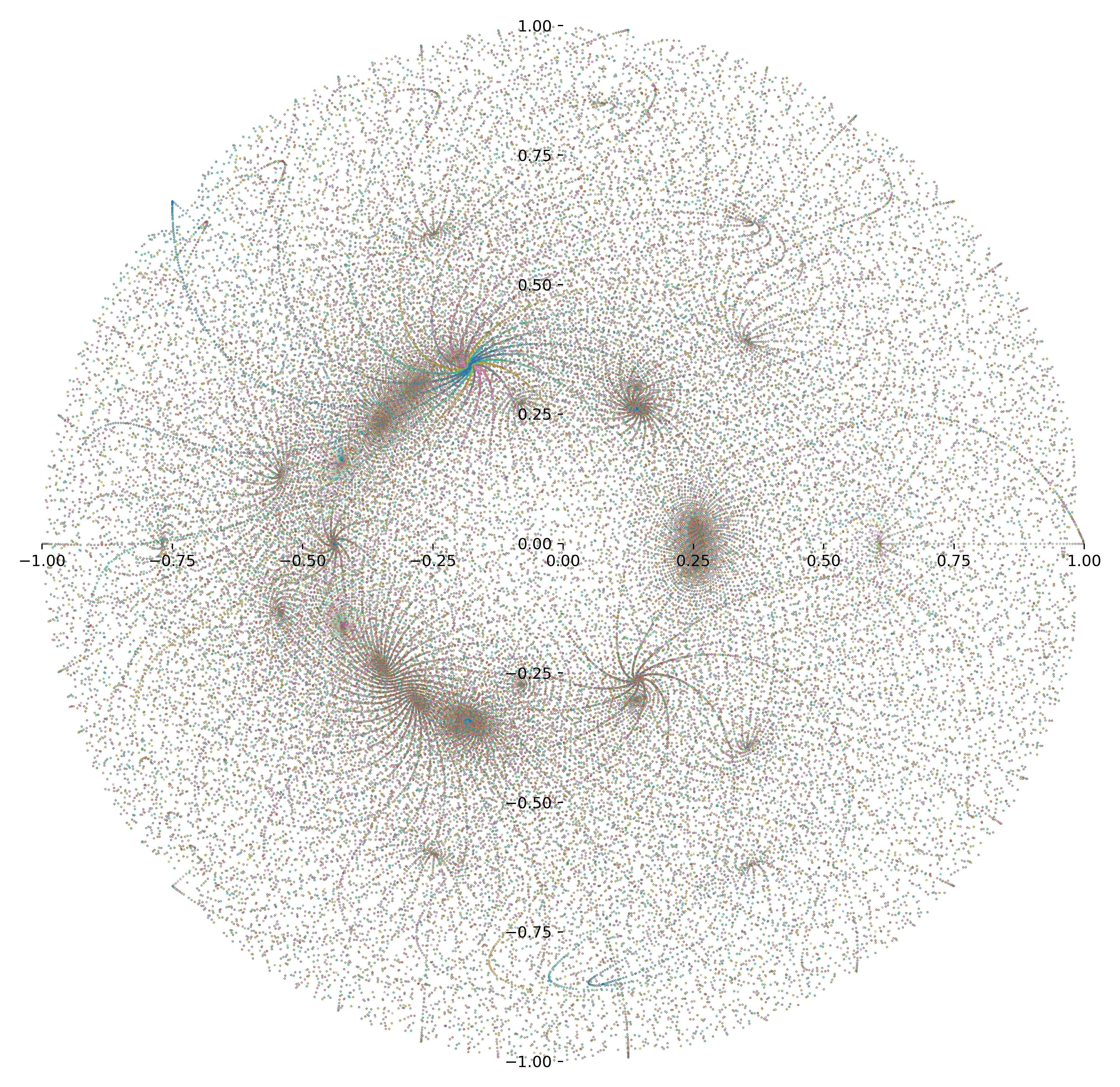}
		\caption{$m = 11^3$, $A = 1 + 121 \alpha$}
		\label{fig:RCFPSpiralsC}
	\end{subfigure}
	\begin{subfigure}[b]{0.4\linewidth}
		\includegraphics[width=\linewidth]{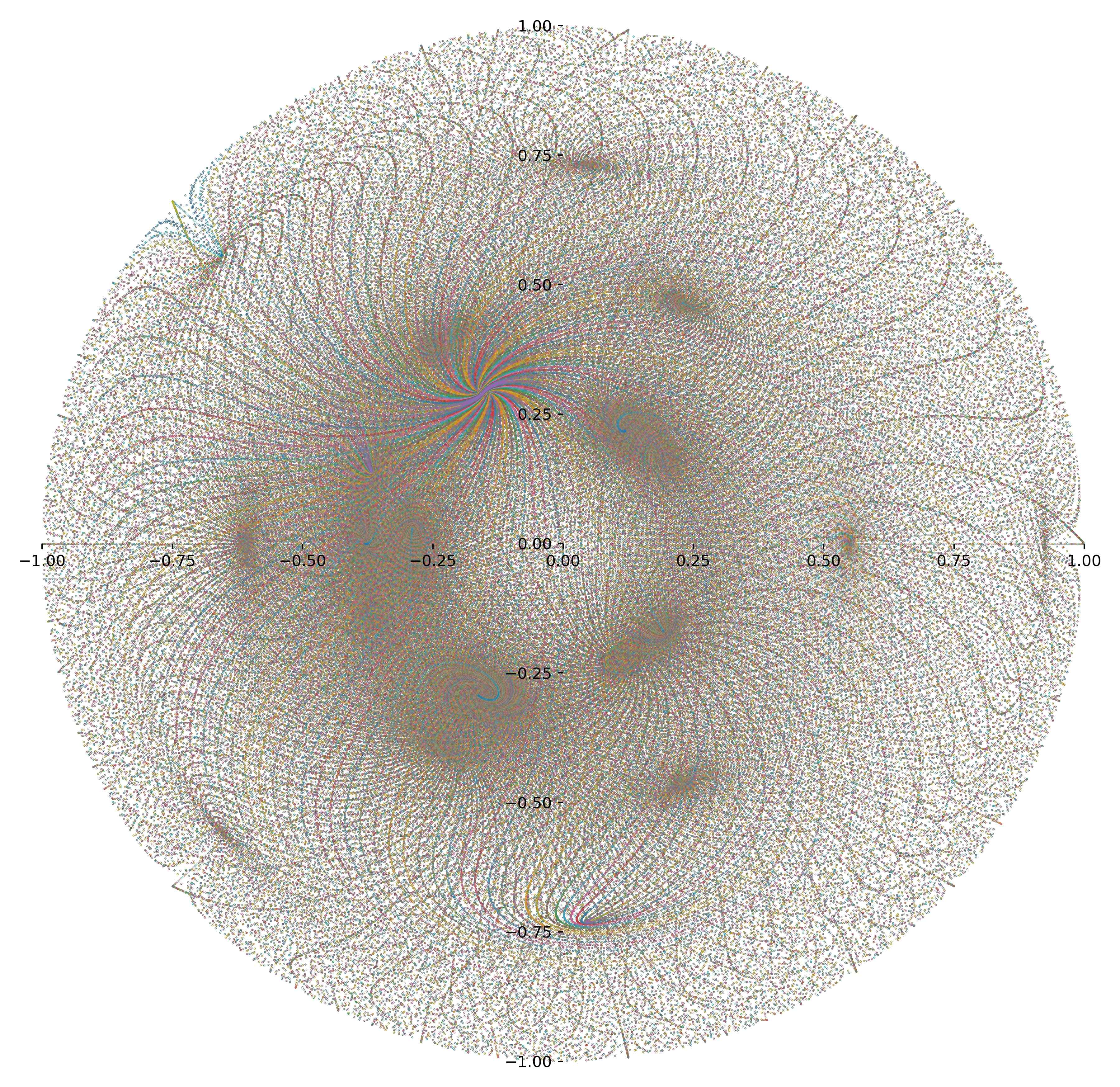}
		\caption{$m = 7^4$, $A = 1 + 343\alpha$}
		\label{fig:RCFPSpiralsD}
	\end{subfigure}
		
	\caption{RCFP plots for $D = 7$, modulus $(m)$ for $m = p^a$, and $A$ of order $d = p$}
	\label{fig:RCFPSpirals}
\end{figure}

The images in this situation are rather striking, as they tend to have easily identifiable lines. In fact, similar to the images in Figure \ref{fig:DGLRCFPAnalogue}, there again seem to be areas where points seem to accumulate more densely; however, in this situation, these accumulation areas often have a much more obvious spiral-like patterns appearing. We've taken to calling these areas ``galaxies.''

Another observation to make is that---unlike the analogous situation of the DGL Theorem in Figure \ref{fig:DGLRCFPAnalogue}---there does seem to be a correlation between the number of these galaxies and the primes in the modulus. For example, one notes that when $m = 5^4$ in Figure \ref{fig:RCFPSpiralsA}, one can make out five red, green, and purple galaxies close to the origin, as well as five primarily blue galaxies which are further out. This pattern seems to continue further away from the origin, though it's difficult to make out the finer details. Figure \ref{fig:RCFPSpiralsB} provides another example of this phenomenon when $m = 3^6$, where there are three primarily blue, red, and green galaxies close to the origin.

Another notable feature here is that if $A$ and $B$ are two elements of the Galois group with the same multiplicative order $p^b$, then their respective RCFP plots are not necessarily the same. This is different than the Gaussian periods case, where we have an identical Gaussian period plot for any $\omega$ whose order mod $p^e$ is $p^b$. This is because $(\Z/p^e\Z)^\times$ is cyclic, while $(\O_K/p^e\O_K)^\times$ is not cyclic when $p$ splits or when $e > 1$. In fact, we have the following isomorphism of abelian groups: $$(\O_K/p^e \O_K)^\times \cong \begin{cases} \F_{p^2}^\times \times (\Z/p^{e - 1}\Z \times \Z/p^{e - 1}\Z) & \text{$p$ is inert,} \\ (\Z/p^e\Z)^\times \times (\Z/p^e \Z)^\times & \text{$p$ splits,} \\ \F_p^\times \times (\Z/p^e\Z \times \Z/p^{e - 1}\Z) & \text{$p$ is ramified.} \end{cases}$$ In Figure \ref{fig:RCFPSpirals5}, we provide examples of three distinct RCFP plots, even though $K = \Q(\sqrt{-7})$, $m = 5^4$, and $A$ has order $5$ for all three cases.

\begin{figure}[h!]
	\centering
	\begin{subfigure}[b]{0.32\linewidth}
		\includegraphics[width=\linewidth]{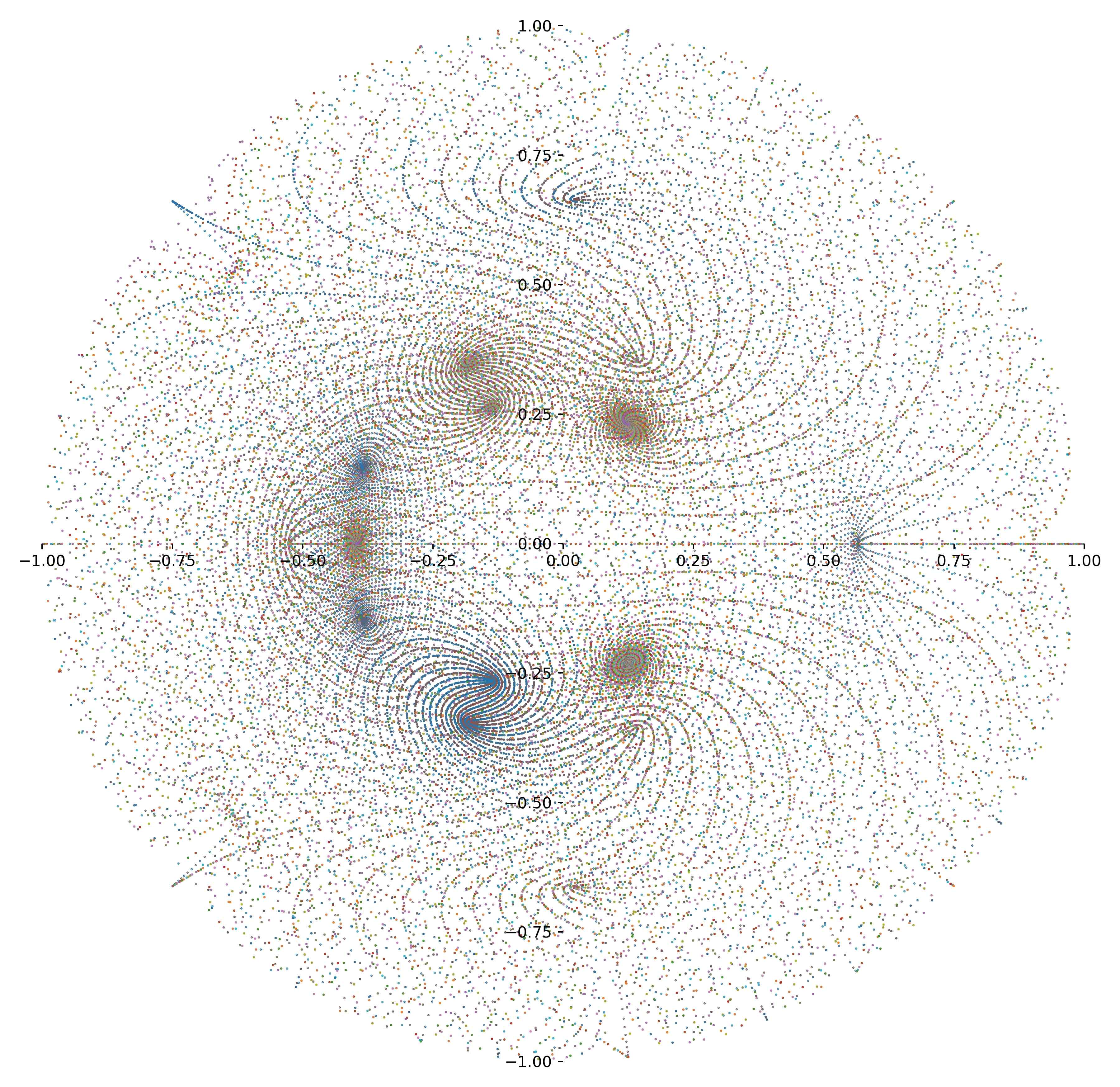}
		\caption{$m = 5^4$, $A = 376 + 250\alpha$}
		\label{fig:RCFPSpirals5A}
	\end{subfigure}
	\begin{subfigure}[b]{0.32\linewidth}
		\includegraphics[width=\linewidth]{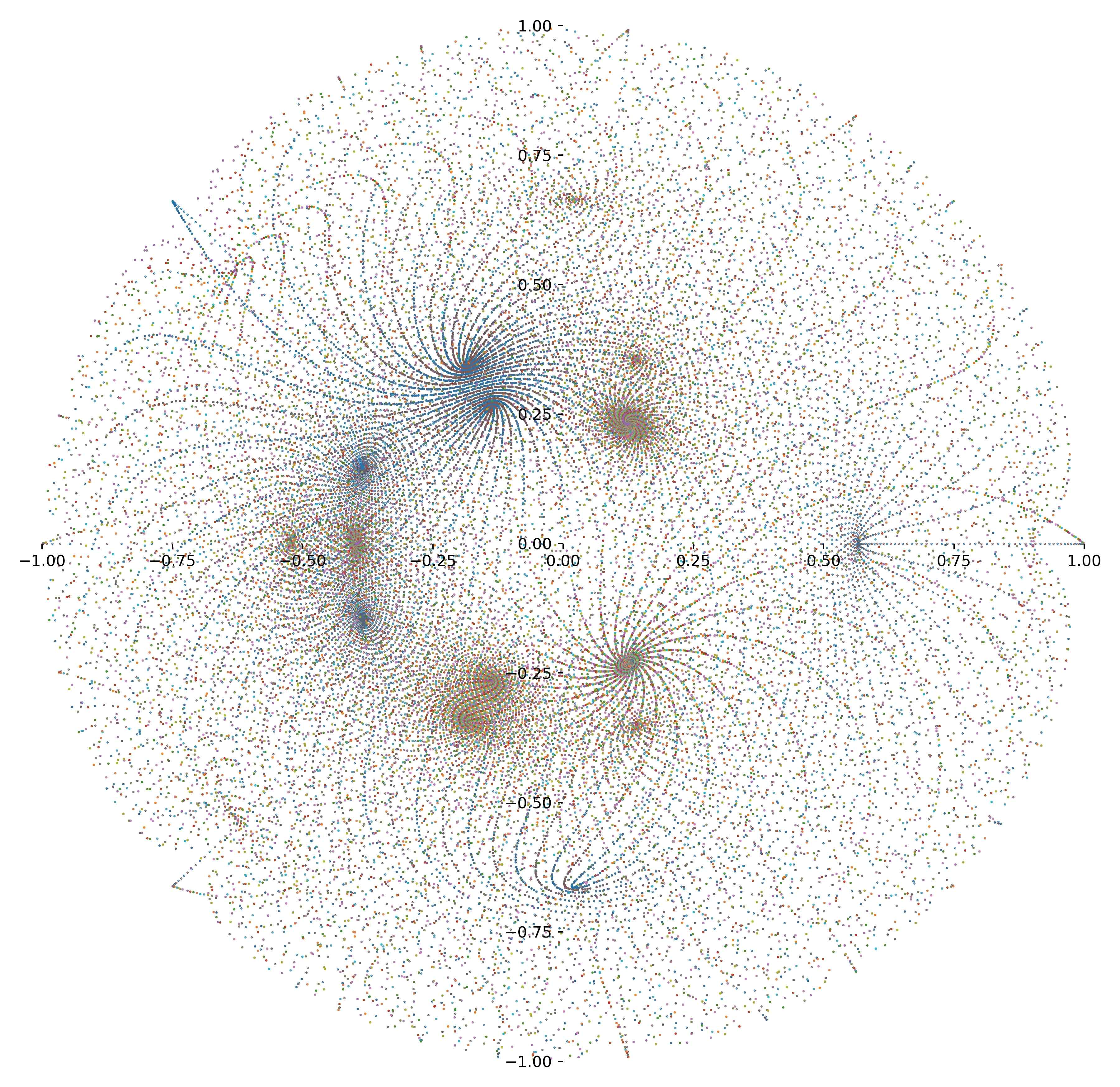}
		\caption{$m = 5^4$, $A = 1 + 125\alpha$}
		\label{fig:RCFPSpirals5B}
	\end{subfigure}
	\begin{subfigure}[b]{0.32\linewidth}
		\includegraphics[width=\linewidth]{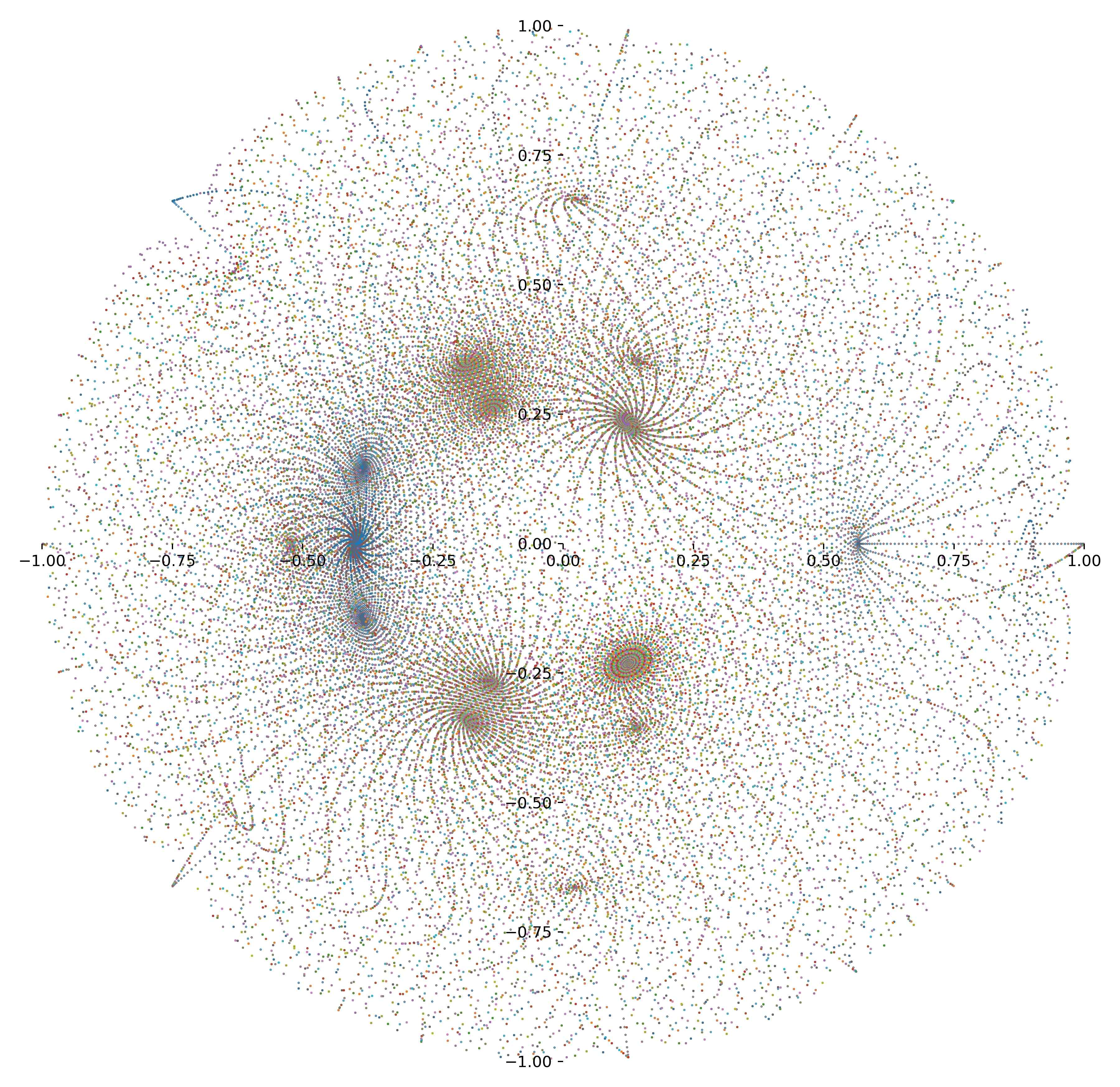}
		\caption{$m = 5^4$, $A = 501 + 250\alpha$}
		\label{fig:RCFPSpirals5C}
	\end{subfigure}
	
	\caption{RCFP plots for $D = 7$, modulus $(m)$ for $m = 5^4$, and $A$ of order $d = 5$}
	\label{fig:RCFPSpirals5}
\end{figure}

Note in particular that the differences between these plots are more substantial than a simple change of coloring. For example, note the behavior of the points along the positive real axis; for Figure \ref{fig:RCFPSpirals5A}, the string of points stays entirely on the real axis, while in Figures \ref{fig:RCFPSpirals5B} and \ref{fig:RCFPSpirals5C}, the string of points veers off toward either the upper or lower half-plane respectively.

\subsection{Obstacles} 

\label{sec:Obstacles}

So far in this section, we have avoided a discussion of RCFP plots in the same rigorous mathematical way that we used with Gaussian periods. The reason for this is due almost entirely to the relative complexity the Galois action on ray class fields of quadratic imaginary fields compared to computing the Galois action on cyclotomic fields, which we will now explain.

For the cyclotomic case, the action of the Galois group can be described concretely as a function on roots of unity. In particular, an element $\sigma_\omega \in \Gal(\Q(\zeta_n)/\Q)$ corresponding to $\omega \in (\Z/n\Z)^\times$ can be described as a function on roots of unity that maps $\zeta_n^a \mapsto \zeta_n^{\omega a}$. This function is easily described, and its arithmetic dynamics is simple enough to study. 

However, for the quadratic imaginary case, consider an element $\sigma_A \in \Gal(K[\m]/K[1])$ corresponding to an element $A \in (\O_K/\m \O_K)^\times/\O_K^\times$. If $x$ is the $x$-coordinate of an $\m$-torsion point of an elliptic curve with CM by $K$ (assuming for this discussion that $K$ is not $\Q(i)$ or $\Q(\zeta_3)$), then $x = \wp(z_x;\O_K)$ for some $z_x \in \C/\O_K$, where $\wp$ is the Weierstrass $\wp$-function. We've described the action of $A$ on $x$ to be the multiplication $A z_x$ in $\C/\O_K$, which is then mapped back to $\C$ via $\wp$. That is, the action of $A$ on $x$ is given by $A \cdot x = \wp(Az_x; \O_K)$. The question then arises whether or not we can describe this action in a ``nice'' way---i.e. where the arithmetic dynamics can be studied in a way similar to the Gaussian period case. Unfortunately, this appears to be much more complicated in the RCFP case, perhaps even prohibitively so. 

For $A \in (\O_K/\m \O_K)^\times/\O_K^\times$, we define the function $$f_A: \C \to \C, \hspace{0.5in} x \mapsto \wp(A z_x; \O_K).$$ Then $f_A$ describes the Galois action of $A$. Also, note that $f_A$ is an even elliptic function relative the lattice $\O_K$. Thus, by the well-known result that even elliptic functions can be written as rational functions in $\wp$ (see, for example, \cite[Theorem VI.3.2]{SilvermanArithmetic}), the function $f_A(x)$ is actually rational in $x = \wp(z_x;\O_K)$. 

Recall that RCFPs were defined as $$\eta_{K, \m, A}: \O_K/\m\O_K \to \C, \hspace{0.5in} \eta_{K, \m, A}(z_x) = \sum_{j = 0}^{r - 1} \wp(A^j z_x; \O_K),$$ and note that $\wp(A^j z_x; \O_K) = (f_A \circ \cdots \circ f_A)(x)$ (the composition of $f_A$ with itself $j$ times). Let us write $f_A^{(j)}(x)$ to represent this $j$-fold self-composition. Then we can rewrite RCFPs in the following way: $$\eta_{K, \m, A}: h(E[\m]) \to \C, \hspace{.5in} \eta_{K, \m, A}(x) = \sum_{j = 0}^{r - 1} f_A^{(j)}(x).$$ Thus the study of RCFP plots is the study of the arithmetic dynamics of the rational functions $f_A$. 

Now, studying the arithmetic dynamics of $f_A$ requires computing $f_A(x)$ explicitly. If $A$ is an integer, then $f_A(x)$ can be built from the standard division polynomials for the elliptic curve isomorphic to $\C/\O_K$ (i.e. polynomials whose roots are the $A$-torsion points). These are defined recursively and depend on the choice of base field, but they are relatively well-known (see \cite[Exercise III.3.7]{SilvermanArithmetic}). For general $A \in (\O_K/\m\O_K)^\times$, however, computing $f_A$ becomes more difficult, since computing the generalized division polynomials is more complex. That said, algorithms do exist for computing these polynomials. To the author's knowledge, Satoh in 2004 in \cite{DivisionPoly} was the first to describe such an algorithm, followed by an optimization of {K\"{u}\c{c}\"{u}ksakall\i} in 2015 in \cite{ComputeDivisionPoly}. 

Regardless, these division polynomials remain difficult to describe in a general way, and the arithmetic dynamics of $f_A$ even more so. It is the author's hope that this paper might spark some interest in studying the rational functions $f_A$ more explicitly, as we believe that studying these functions will prove to be worthwhile mathematically. 

\section{Questions and Strategies}

As we have seen, there are many striking and beautiful visual properties of Gaussian periods and their analogues. While this and previous papers have explored and explained some of these properties, there are many unanswered questions left. With this in mind, we conclude this article with some open questions that arose while studying these objects, along with some strategies and techniques to aid further study. While some of these questions and strategies have already been mentioned in previous sections, we thought it convenient to consolidate them here.

\subsection{Questions}

\begin{enumerate}
	\item What happens to the values of the supercharacter theories and RCFPs when using the action of a non-cyclic subgroup of the automorphism group? 
	
	\item Can anything be said about Gaussian periods in general when restricting $\eta_{n, \omega}$ to $k \in (\Z/n\Z)^\times$ (that is, only the values relatively prime to $n$)? As we mention in question (3), we note this question has already been partially answered by Untrau. However, Untrau's result works only for cases where $n$ is a power of an odd prime and does not address the case for general $n$.
	
	\item More generally, can anything interesting be said when restricting to various subsets of $\Z/n\Z$? Untrau shows in Theorem A of \cite{untrau2021equidistribution} that the equidistribution result seen in Theorems \ref{thm:DGL} and \ref{thm:DGLgeneralize} holds even when restricting to elements of $(\Z/p^e\Z)^\times$ (among other situations). Are there other conditions on subsets of $\Z/n\Z$ which retain this result?
	
	\item Similarly, can anything interesting be said about RCFPs when restricting $\eta_{K, \m, A}$ to $z \in (\O_K/\m \O_K)^\times$? What about for other subsets of $\O_K/\m \O_K$?
	
	\item Can the behavior of supercharacter theory values be succinctly described for composite moduli $n = p_1^{e_1} \cdots p_\ell^{e_\ell}$ when the order of the matrix $A \mod p_i^{e_i}$ is different for different choices of $i$? Compare this to Theorem \ref{thm:DGLgeneralize}, where the fact that $\Phi_d(A) = 0 \mod n$ forces $A \mod p_i^{e_i}$ to have multiplicative order $d$ for all $i$.
	
	\item Let $g_d$ be the Laurent polynomial described in Theorem \ref{thm:DGL}. Can the geometry of $\img(g_d)$ be described succinctly when $d$ is not a prime power? On this note, can something be said about $\img(g_d)$ as the set of traces of some subgroup of $\SU(d)$ when $d$ is not a prime power?
	
	\item Is it possible to study explicitly the rational functions $f_A$ described in Section \ref{sec:Obstacles} in a general way? If so, what can be said about the arithmetic dynamics of $f_A$?
	
	\item Why do the patterns mentioned in Section \ref{sec:Observations} show up in RCFP plots? This is related to the previous question, though not necessarily reliant upon it.
\end{enumerate}

\subsection{Strategies}

\label{sec:strategies}

There are a few strategies and heuristics that we have found helpful in our studies, and we offer a brief overview of what we think to be the important ones.

From a theoretical perspective, we have found it the most fruitful to start by finding the prime factorization of the modulus $n$ or $\m$. We then use the Chinese Remainder Theorem to study the behavior of $\omega$ or $A$ modulo prime powers, followed by trying to connect this more local behavior back to the overall behavior mod $n$ or $\m$. In fact, Theorem \ref{thm:DGLgeneralize} originated partly with the observation that the DGL Theorem was true for more general $n = p_1^{e_1} \cdots p_\ell^{e_\ell}$, as long as $\omega \mod p_i^{e_i}$ had multiplicative order $d$ for every $i$. The reader might view \cite[\S 3]{CompositeModuli} to see how this sort of strategy has been utilized elsewhere.
	
From a computational perspective, computing power and time are the main issues that one is constantly brushing up against. An astute reader might have already noticed that our choices for moduli in our examples are often fairly small, especially outside of a few examples of Gaussian period plots where some moduli were seven or eight digits long. This is, of course, almost entirely due to the quickly increasing number of computations needed as the modulus grows larger. For example, generating cyclic supercharacter plots for $G = (\Z/n\Z)^m$ is dominated by the $n^m$ computations of the supercharacter values. Even for modest choices of $n$ and even if $m = 2$, the amount of time needed to generate these plots can get out of hand quite quickly. 

Because of this, we are often quite limited in the choices of modulus with which we are allowed to experiment. Thus, it is often good practice to construct $n$ oneself by choosing the primes in its factorization. This allows one to guarantee the existence of elements in the automorphism group which have the desired properties. 

In particular, one property that one might wish to control is an element's multiplicative order, which is often an element's most important attribute when studying Gaussian periods and their analogues. Thus, in order to facilitate the work of an interested reader, we offer a few basic observations that might be useful if one hasn't worked much with these groups before.

\begin{itemize}
	\item All the automorphism groups arising in the study of Gaussian periods and the cyclic supercharacters described in Section \ref{sec:DGLgeneralization} are isomorphic to $\GL_m(\Z/n\Z)$ for some $n \geq 2$ and $m \geq 1$. If $n = p_1^{e_1} \cdots p_\ell^{e_\ell}$ is the factorization into prime powers, then one can use the Chinese Remainder Theorem to decompose this group as $$\GL_m(\Z/n\Z) \cong \GL_m(\Z/p_1^{e_1}\Z) \times \cdots \times \GL_m(\Z/p_\ell^{e_\ell}\Z).$$ Additionally, if $p^e$ is some prime power, then we have that $$\# \GL_m(\Z/p^e\Z) = p^{(e - 1)m^2}(p^m - 1)(p^m - p) \cdots (p^m - p^{m - 1}).$$ Thus is $d$ is the order of some element $A \in \GL_m(\Z/n\Z)$, then $d$ must divide $$\prod_{i = 1}^\ell p_i^{(e_i - 1)m^2}(p_i^m - 1) \cdots (p_i^m - p_i^{m - 1}).$$
	
	\item For RCFPs, we are instead looking at elements $A \in (\O_K/p^e\O_K)^\times/\O_K^\times$. If $d$ is the multiplicative order of $A$, then the possibilities for $d$ are determined by the isomorphism presented toward the end of Section \ref{sec:Observations}. Also, if one wishes to determine whether a prime $p \in \Z$ ramifies, splits, or is inert in $\O_K = \Z[\alpha]$, then one need only determine if the minimal polynomial for $\alpha$ has double roots, distinct roots, or is irreducible mod $p$ (respectively).
	
	\item Finding an element $A \in \GL_m(\Z/n\Z)$ such that $\Phi_d(A) = 0 \mod n$ is often laborious, and we haven't found a method much better than simply having Sage do the following: 
	\begin{enumerate}[(i)]
		\item Take a matrix $A \in \Mat_m(\Z/n\Z)$.
		\item Verify that $\det(A)$ is relatively prime to $n$. If not, go back to (i).
		\item Compute the multiplicative order of $A \mod n$.
		\item If the order from (iii) is not $d$, go back to (i).
		\item If the order from (iii) is $d$, compute $\Phi_d(A) \mod n$. If this is $0 \in \Mat_m(\Z/n\Z)$, then return $A$. Otherwise, go back to (i).
	\end{enumerate} 
	However, we have noticed that such an element seems to exist in $\GL_m(\Z/p^e\Z)$ only when $d$ divides $\# \GL_m(\Z/p\Z)$. Thus, such an element should exist in $\GL_m(\Z/n\Z)$ only when $d \mid (\# \GL_m(\Z/p\Z))$ for every $p \mid n$. The converse is not necessarily true, since (for example) one can verify computationally that there is no $A \in \GL_2(\Z/25\Z)$ which satisfies $\Phi_5$.
	
	\item Otherwise, when trying to find an element $A \in \GL_m(\Z/n\Z)$ of order $d$ that doesn't need to satisfy $\Phi_d$, we often found it to be faster simply to find the order of some random matrix $B$ (say the order is $c$), and if $d$ divides $c$, then we set $A = B^{c/d}$.
	
	\item When trying to find elements $A \in \GL_m(\Z/p^e\Z)$ of order $p^a$ (that don't need to satisfy $\Phi_{p^a}$), one can verify (using the Binomial Theorem) that if $B \in \Mat_m(\Z/p^a\Z)$ and the matrix $A = I + p^{e - a} B$ is invertible, then $A$ will have order dividing $p^a$. When $m = 1$, we can be more explicit: an element $\omega \in (\Z/p^e\Z)^\times$ has order $p^a$ if and only if $\omega = 1 + p^{e - a} \beta$ for some $\beta \in (\Z/p^a\Z)^\times$.
	
	\item The above statement similarly works for the automorphism group used in RCFPs. That is, if $\beta \in (\O_K/p^a\O_K)^\times$ and $A = 1 + p^{e - a}\beta$ is an element of $(\O_K/p^e\O_K)^\times$, then $A$ will have order $p^a$.
\end{itemize}

\section{Code} 

\label{sec:Code}

Most of the code used to generate these images was written in Python, though we used Sage for convenience reasons to generate the animations (explained in Section \ref{sec:tracemaps}) and the Laurent polynomials $g_d$ (explained in Theorem \ref{thm:DGL} and Section \ref{sec:tracemaps}). Many algorithmic aspects of computing elliptic curve torsion points were based on the algorithms in \cite{CohenClassField, CohenNumThry}, and these are often cited in the comments of the code itself. Readers may access our code at the following GitHub link: \textcolor{blue}{\href{https://github.com/SamanthaPlatt/GaussianPeriodsandAnaloguesCode}{https://github.com/SamanthaPlatt/GaussianPeriodsandAnaloguesCode}}

\newpage

\bibliography{ExperBib.bib}{}
\bibliographystyle{plain}

\end{document}